\definecolor{cobalt}{RGB}{61,89,171}
\newcommand{\la}{\langle}
\newcommand{\ra}{\rangle}
\newcommand{\cC}{{\mathcal{C}}}
\newcommand{\cQ}{\mathcal{Q}}
\newcommand{\CC}{\mathbb{C}}
\newcommand{\GG}{\mathbb{G}}
\newcommand{\RR}{\mathbb{R}}
\newcommand{\PP}{\mathbb P}
\newcommand{\im}{\operatorname{im}}
\newcommand{\GL}{\operatorname{GL}}
\newcommand{\PGL}{\operatorname{PGL}}
\newcommand{\ii}{\operatorname{i}}
\DeclareMathOperator{\Aut}{Aut}
\DeclareMathOperator{\rank}{rank}
\renewcommand{\a}{\alpha}
\renewcommand{\b}{\beta}
\newcommand{\g}{\gamma}
\newcommand{\f}{\varphi}
\renewcommand{\l}{\lambda}
\newcommand{\p}{\phi}
\renewcommand{\L}{\Lambda}
\newcommand{\kk}{\mathrm{k}}
\newcommand{\frg}{\mathfrak{g}}
\newcommand{\tr}[1]{\textrm{#1}}
\newtheorem{theorem}{Theorem}
\newtheorem{proposition}[theorem]{Proposition}
\newtheorem{lemma}[theorem]{Lemma}
\newtheorem{corollary}[theorem]{Corollary}
\theoremstyle{definition}
\newtheorem{definition}[theorem]{Definition}
\newtheorem{remark}[theorem]{Remark}
\title{8-dimensional 2-step nilpotent Lie algebras over algebraically closed fields with char $\ne 2,3$.}
\author[G. Bazzoni]{Giovanni Bazzoni}
\address{Dipartimento di Scienza ed Alta Tecnologia, Università degli Studi dell'Insubria, Via Valleggio 11, 22100, Como, Italy}
\email{giovanni.bazzoni@uninsubria.it}
\author[J. Rojo]{Juan Rojo}
\address{ETS Ingenieros Informáticos, Universidad Politécnica de Madrid,
Campus de Montegancedo, 28660, Madrid, Spain}
\email{juan.rojo.carulli@upm.es}
\subjclass[2010]{17B30, 15A75, 14M15}
\keywords{2-step nilpotent Lie algebras, minimal algebras, Klein quadric}
\begin{document}

\begin{abstract}
We provide a self contained, elementary, and geometrically-flavored classification of $8$-dimensional $2$-step nilpotent Lie algebras over algebraically closed fields of characteristic $\ne 2,3$, using the algebro-geometric arguments from \cite{B} and elementary linear algebra.
\end{abstract}

\maketitle

%\textcolor{red}{References}, \textcolor{blue}{comments}, \textcolor{green}{notations}.

%\tableofcontents

\section{Introduction}

The classification problem stands as one of the main challenges in the theory of (finite dimensional) Lie algebras. 
Over an algebraically closed field, Levi's decomposition theorem asserts that any such Lie algebra is a semidirect product of a solvable Lie algebra and a semisimple Lie algebra, so we are left with the task of classifying both semisimple and solvable Lie algebras. The semisimple Lie algebras are well understood and classified. 
However, the study of solvable lie algebras is harder. Even the simpler case of classifying nilpotent Lie algebras in arbitrary dimension turns out to be a hopeless problem. More precisely, this belongs to a class called \emph{wild problems}, see for instance \cite{Belitskii}. Roughly speaking, a problem is wild if it contains the problem of classifying conjugacy classes of pairs of matrices, i.e.~to find a simultaneous canonical form for pairs of endomorphisms of a vector space. This family of problems is considered to be extremely difficult, and there is no hope whatsoever of finding algorithms that solve them.

Nevertheless, the classification of nilpotent Lie algebras in low dimensions is possible, and it is indeed an interesting problem both in algebra and geometry. The survey \cite{Kaygorodov_2024} contains up-to-date results for the classification of many types of algebras in low dimensions. In the context of differential geometry (the domain of expertise of the authors) nilpotent Lie algebras are interesting because they are closely related to nilmanifolds. Nilmanifolds are compact quotients of a nilpotent Lie group by a subgroup, and they provide an interesting source of examples of closed manifolds. Moreover, one can define a tensor on the Lie algebra, and then extend it to an invariant tensor on the nilmanifold. This means, basically, that in the context of a nilmanifold we can reduce differential geometry to linear algebra. This is extremely useful for computational purposes and has been extensively used in the construction of explicit examples of manifolds with a certain geometric structure given by a suitable tensor. We refer to \cite{Tralle-Oprea} for more applications of these algebras in rational homotopy theory.

The classification of nilpotent Lie algebras of dimension $\leq 5$ does not present difficulties. The first classification in dimension 6 is apparently due to Umlauf, a student of Engel. Many modern classifications are available, see for instance \cite{B-M,Cicalo,deGraaf,Magnin,Morozov}. The approach of \cite{B-M} is the one we will pursue in this paper. In dimension seven, the problem becomes much harder. In \cite{Seeley} the complex case is tackled, and a full classification over $\RR$ was obtained in \cite{Gong}. A general classification (for an arbitrary field) is still lacking. In dimension 7 one can restrict to the smaller class of {\em 2-step} nilpotent Lie algebras, those whose commutator ideal is contained in the center. This was done in \cite{B} using similar techniques as the ones used here, and yields a classification over any field of characteristic not 2.

In this paper we focus on classifying 2-step nilpotent Lie algebras of dimension $8$ over an algebraically closed field $\kk$ of characteristic $\ne 2, 3$. This result generalizes other similar results in the literature. Over the field $\CC$ of complex numbers, the classification of the irreducible such Lie algebras, i.e. those which are not sums of lower-dimensional algebras, can be found in \cite{Ren2011,Yan}. More precisely, \cite{Ren2011} tackles irreducible Lie algebras with two-dimensional center, while \cite{Yan} deals with centers of dimensions three and four. On the other hand, degenerations of complex 2-step nilpotent Lie algebras have been studied in \cite{Alvarez}.

However, the techniques from \cite{Ren2011,Yan} are very different from the ones we use. They work directly in the Lie algebra with the bracket, and use the concept of {\em minimal systems of generators}, which is more specific of Lie theory. In order to distinguish the different algebras, they use some algebraic invariants (semi-simple derivations) that involve non-trivial machinery.
Instead, we work in the dual space of the Lie algebra, which is a {\em minimal differential graded algebra} (see Section \ref{sec:preliminaries} for the relevant definitions); the bracket is then dualized to a bivector. Moreover, the invariants we use to distinguish the algebras have an algebro-geometric interpretation as relative positions of a linear subspace with respect to certain algebraic varieties appearing in the stratification by the rank of the bivectors. With this approach, we obtain the following result:

\begin{theorem} \label{thm:main}
Let $\kk$ be any algebraically closed field of characteristic $\ne 2,3$. There are $35$ isomorphism classes of $8$-dimensional minimal algebras generated in degree $1$ over $\kk$, whose characteristic filtration has length 2.
\end{theorem}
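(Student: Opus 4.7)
The plan is to follow the algebro-geometric framework established in \cite{B}. A minimal DGA $(\Lambda V, d)$ of dimension $8$ whose characteristic filtration has length $2$ is determined by a splitting $V = V_1 \oplus V_2$, where $V_1 = \ker d$ has dimension $a$, $V_2$ has dimension $b$ with $a + b = 8$ and $b \geq 1$, and the differential restricts to an injective linear map $d \colon V_2 \to \Lambda^2 V_1$. A direct check shows that two such DGAs are isomorphic if and only if the subspaces $W := d(V_2) \subset \Lambda^2 V_1$ lie in the same $\GL(V_1)$-orbit, since changing the basis of $V_2$ alters the identification $V_2 \cong W$ but not the subspace $W$ itself, and any DGA isomorphism must preserve $V_1 = \ker d$. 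The problem therefore reduces to classifying $b$-dimensional linear subspaces of $\Lambda^2 \kk^a$ modulo $\GL_a$, for each admissible pair $(a,b)$. The bound $b \leq \binom{a}{2}$ together with $a+b=8$ and $b \geq 1$ leaves exactly four cases, $(a,b) \in \{(7,1),\,(6,2),\,(5,3),\,(4,4)\}$, and these four families are mutually non-isomorphic because $a = \dim \ker d$ is an invariant.

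For each pair $(a,b)$ I would stratify $\Lambda^2 V_1$ by Pfaffian rank and analyse $W$ by its position relative to the rank varieties. The case $(7,1)$ is elementary: a single nonzero bivector in $\kk^7$ is classified up to $\GL_7$ by its rank, which can be $2$, $4$, or $6$, giving three orbits. In the case $(4,4)$, taking orthogonal complements in $\Lambda^2 \kk^4 \cong \kk^6$ dualises the problem to the classification of pencils in $\Lambda^2 (\kk^4)^*$, which is handled by the Kronecker--Weierstrass normal forms for pencils of skew-symmetric forms. The case $(6,2)$ is itself a pencil in $\Lambda^2 \kk^6$: the Pfaffian restricts to a cubic divisor on $\PP(W) \cong \PP^1$, and its combinatorial type together with the Kronecker--Weierstrass data of the pencil produces a finite list of normal forms. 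In every case I would also track the common kernel $K := \{u \in V_1^* \colon \iota_u \omega = 0 \text{ for all } \omega \in W\}$; subspaces with $K \neq 0$ correspond to algebras admitting a nontrivial abelian direct summand, and they match the classification in type $(a - \dim K, b)$, thereby recovering the decomposable algebras from lower-dimensional building blocks.

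The main obstacle will be the case $(5,3)$: a three-dimensional \emph{web} of bivectors in $\kk^5$ has genuinely richer geometry than a pencil. The key invariants are the position of $\PP(W) \cong \PP^2$ relative to the Grassmann variety $\mathrm{Gr}(2,5) \subset \PP(\Lambda^2 \kk^5)$ of decomposable bivectors, together with the structure of the kernels of the bilinear contraction pairings $W \times V_1^* \to V_1$. A careful subcase analysis is required, both to produce discrete normal forms for each stratum and to confirm that the $\GL_5$-orbits are fully separated by these invariants (in particular, that no continuous moduli appear at this dimension). After completing the four case analyses and collecting the decomposable instances arising from $K \neq 0$, summing the orbit counts across all strata yields the total of $35$ isomorphism classes.
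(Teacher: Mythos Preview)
Your overall framework is exactly the paper's: reduce to classifying $b$-dimensional subspaces of $\Lambda^2\kk^a$ modulo $\GL_a$, and stratify by the position of $\PP(W)$ relative to the rank loci. Two of your proposed shortcuts differ from the paper and are worth noting. For $(4,4)$ you dualise a $4$-plane in $\Lambda^2\kk^4$ to a pencil in $\Lambda^2(\kk^4)^*$; the paper instead intersects $\PP(W)\cong\PP^3$ with the Klein quadric and classifies by the rank ($2$, $3$, or $4$) of the resulting quadric surface. Both give three orbits, and your route is arguably cleaner since it recycles the pencil machinery. For $(6,2)$ you invoke Kronecker--Weierstrass normal forms for pencils of skew forms; the paper carries out an explicit relative-position analysis of the line $\ell$ versus the Pfaffian cubic $\cC$ and the Grassmannian $\GG(1,5)$, and only mentions the pencil classification in a remark as an alternative. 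Your approach is more systematic but less self-contained; the paper's is elementary and constructive.

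There are two gaps. First, a bookkeeping issue: you impose $b\ge 1$, which excludes the abelian algebra, yet you assert a total of $35$. The paper's $35$ \emph{includes} the abelian case $(8,0)$; without it the non-abelian count is $3+11+17+3=34$. Either drop the restriction $b\ge 1$ or adjust the total. Second, and more substantively, the case $(5,3)$ is where the real content lies---it accounts for $17$ of the $35$ classes---and your proposal does not carry it out. Saying ``a careful subcase analysis is required'' and listing the Grassmannian $\mathrm{Gr}(2,5)$ and kernel pairings as invariants is correct in spirit, but the paper needs roughly half its length to execute this: it separates planes $\pi$ contained in some $\PP(\Lambda^2 Y)$ with $\dim Y\le 4$, then planes meeting $\GG(1,4)$ in a line, then planes meeting $\GG$ in $3$, $2$, $1$, or $0$ points, with the single-point case further split by the scheme structure of $\pi\cap\GG$ and by an auxiliary degree-$2$ rational map $\PP^1\to\PP(U_1)$. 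Verifying that no moduli appear (especially when $|\pi\cap\GG|=1$ and in the generic case $\pi\cap\GG=\emptyset$, which requires a stabiliser-dimension argument from \cite{Manivel-Mezzetti}) is not automatic from the invariants you list.
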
%\fine

This is a consequence of the analysis in Subsection \ref{easy:cases} and in Sections \ref{sec:(6,2)}, \ref{sec:(5,3)} and \ref{sec:(4,4)}; explicit models for such algebras are contained in Table \ref{table:final}. By the correspondence that assigns a differential on the exterior algebra of $V$ to a Lie algebra structure on the vector space $\frg=V^*$ (valid on any field $\kk$ of characteristic $\neq 2$, see Section \ref{sec:preliminaries}), we obtain the following:

\begin{corollary}
Let $\kk$ be any algebraically closed field of characteristic $\ne 2,3$. There are $35$ isomorphism classes of $8$-dimensional 2-step nilpotent Lie algebras over $\kk$.
\end{corollary}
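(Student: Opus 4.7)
The plan is to deduce the corollary from Theorem~\ref{thm:main} via the Chevalley--Eilenberg correspondence, which bijects finite-dimensional Lie algebras with minimal commutative DGAs generated in degree~$1$. The only conceptual work is verifying that this correspondence (a) induces a bijection on isomorphism classes, and (b) matches $2$-step nilpotency on one side with ``characteristic filtration of length~$2$'' on the other.

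For (a), given an $8$-dimensional Lie algebra $(\frg,[\cdot,\cdot])$, I would set $V = \frg^*$ and dualize the bracket $\Lambda^2\frg \to \frg$ to a linear map $d : V \to \Lambda^2 V$. This uses the identification $(\Lambda^2\frg)^* \cong \Lambda^2 V$, valid because $\operatorname{char}\kk \neq 2$ — the one point where the hypothesis on $\kk$ actually enters. Extending $d$ as a degree-$1$ derivation on $\Lambda V$ via the Leibniz rule, one checks that $d^2 = 0$ on $V$ is equivalent to the Jacobi identity for $[\cdot,\cdot]$ (so $d^2=0$ everywhere, $d$ being a derivation), producing a minimal DGA generated in degree~$1$. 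The construction is manifestly reversible, and a linear map $\varphi:\frg \to \frg'$ is a Lie algebra isomorphism iff $\varphi^*$ is a DGA isomorphism, yielding the bijection on isomorphism classes.

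For (b), from $d\xi(X,Y) = -\xi([X,Y])$ one identifies $V_1 := \ker d|_V$ with the annihilator $[\frg,\frg]^\perp$, and then $\Lambda^2 V_1 \subset \Lambda^2 V$ corresponds to those alternating forms on $\frg$ that vanish whenever at least one argument lies in $[\frg,\frg]$. Therefore the condition $V_2 = V$, equivalently $d(V) \subseteq \Lambda^2 V_1$, translates by dualization into $[[\frg,\frg],\frg] = 0$, i.e.\ $[\frg,\frg] \subseteq Z(\frg)$, which is precisely $2$-step nilpotency. Combining (a) and (b), Theorem~\ref{thm:main} yields $35$ isomorphism classes on the Lie algebra side, finishing the corollary.

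The substantive obstacle is, of course, wholly contained in Theorem~\ref{thm:main} itself; once the theorem is granted, the corollary is a formal translation whose only delicate point is keeping track of the hypothesis $\operatorname{char}\kk\neq 2$ required to pair $\Lambda^2 V$ with $\Lambda^2 \frg$, comfortably satisfied under $\operatorname{char}\kk = 0$.
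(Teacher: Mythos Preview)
Your proposal is correct and follows essentially the same approach as the paper: the corollary is obtained from Theorem~\ref{thm:main} via the Chevalley--Eilenberg correspondence between nilpotent Lie algebras and minimal algebras generated in degree~$1$, together with the identification of $2$-step nilpotency with characteristic filtration of length~$2$ (the paper cites \cite[Lemma~3]{B} for the latter, while you spell out the duality argument directly). The only cosmetic difference is notation: your $V_1,V_2$ correspond to the paper's $W_0,W_1$.
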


As far as the authors know, the classification over arbitrary algebraically closed fields of characteristic $\ne 2, 3$ is a novel result, and it contains the previously mentioned classification available for the irreducible complex Lie algebras.
Apart from being valid over a more general field, the method we follow does not require to distinguish cases according to irreducibility, as both irreducible and reducible algebras appear naturally in the same line of thought. 
From the viewpoint of using the classification in other contexts (for instance, in order to construct nilmanifolds), it is very useful to collect all the 2-step nilpotent algebras in the same table, both reducible and irreducible. 

In addition, the classification presented here is self-contained, and uses mainly elementary and constructive methods. If one starts with any 8-dimensional 2-step nilpotent Lie algebra over $\kk$ with a given system of generators, one can locate it in the corresponding table and find the associated standard model, following these steps: first, dualize and obtain the structure equations of the corresponding minimal differential graded algebra; second, consider the linear subspace generated by the differentials of degree-1 elements, and compute its relative position with the strata by rank. With this information, it is possible to identify the algebra in the corresponding table. For each of the algebras, we provide a deduction of the standard model, i.e. a way of arriving at the model from the table, after several changes of bases.

In a forthcoming paper we shall use similar techniques to obtain the classification of $2$-step eight dimensional minimal algebras when the base field is $\RR$. These have been classified in the recent paper \cite{Borovoi2024} using more algebraically-flavoured methods. Recall that this type of nilpotent Lie algebras provides a rich source of examples for studying the behaviour of many types of geometric structures on nilmanifolds, for instance: complex structures and special Hermitian metrics (\cite{latorre2025}, \cite{latorre2017}), complex-symplectic structures (\cite{bazzoni2018}), and Spin(7)-structures (\cite{bazzoni2022}).
In this context, we hope our approach to the classification of 2-step algebras can provide a useful alternative.

\textbf{Acknowledgements.} We are grateful to Laurent Manivel, Vicente Muñoz and Giorgio Ottaviani for their help and suggestions. Also, we thank Ivan Kaygorodov for pointing us very useful references of related results we were unaware of. Special thanks to Riccardo Re for his help, and for engaging with us in several mathematical discussions. The first author is partially supported by the PRIN 2022 project ``Interactions between Geometric Structures and Function Theories''(code 2022MWPMAB) and by the GNSAGA of INdAM. The second author is partially supported by PID2021-126124NB-I00. This work was partially done during a research stay of the second author at the Dipartamento di Scienza ed Alta Tecnologia of Università dell'Insubria, funded by Universidad Politécnica de Madrid. The second author is grateful to the department for the welcoming environment.

\section{Preliminaries}\label{sec:preliminaries} 

\subsection{Minimal CDGA's}

A commutative differential graded algebra (CDGA, for short) over a field $\kk$ (of characteristic $\textrm{char}(\kk)\neq 2$) is a
graded $\kk$-algebra $A=\oplus_{k\geq 0} A^k$ such that $xy=(-1)^{|x||y|} yx$, for homogeneous
elements $x,y$, where $|x|$ denotes the degree of $x$, and endowed with a differential $d\colon A^k\to A^{k+1}$, $k\geq 0$, satisfying $d(xy)=(dx) y + (-1)^{|x|} x (dy)$, for homogeneous elements $x,y$. Morphisms between differential algebras are required to be degree-preserving algebra maps which commute with the differentials. Given a differential algebra $(A,d)$, we denote by $H^*(A)$ its cohomology. We say that a CDGA is \textit{connected} if $H^0(A)=\kk$.

A {\em minimal algebra}, is a CDGA $(A,d)$ of the following form:
\begin{enumerate}
 \item $A$ is the free commutative graded
 algebra $\Lambda V$ over a graded vector space $V=\oplus V^i$, %and
 \item there exists a collection of generators $\{ x_\tau,
 \tau\in I\}$, for some well ordered index set $I$, such that
 $\deg(x_\mu)\leq \deg(x_\tau)$ if $\mu < \tau$ and each $d
 x_\tau$ is expressed in terms of preceding $x_\mu$ ($\mu<\tau$).
 This implies that $dx_\tau$ does not have a linear part.
\end{enumerate}

Minimal algebras are called nilpotent minimal algebras in \cite{Sullivan}. We have the following fundamental result: every connected CDGA $(A,d)$ has a \emph{minimal Sullivan model}; this means that there
exists a minimal algebra $(\Lambda V,d)$ together with a CDGA morphism
\[
\phi\colon(\Lambda V,d)\to(A,d)
\]
which induces an isomorphism on cohomology. The minimal model of a CDGA over a field $\kk$ of characteristic zero is unique up to isomorphism.

%The corresponding result for fields of arbitrary characteristic is not known: in fact, existence is proved in exactly in the same way as for characteristic zero, but the uniqueness is an open question. The uniqueness of the minimal model for the minimal algebras treated in this paper is proved in \cite{B-M}.

\medskip

Now we turn to the realm of Lie algebras.
To each Lie algebra $\frg$ we can associate the Chevalley-Eilenberg complex $(\Lambda\frg^*,d)$, whose differential is described according to the Lie algebra structure of $\frg$; namely, if $\{X_i\}$ is a basis for $\frg$ and $\{x_i\}$ denotes the dual basis for $\frg^*$, then
\begin{equation}\label{eq:224}
 d x_k(X_i,X_j)=-x_k([X_i,X_j]).
\end{equation}

Now suppose that $\frg$ is a \textit{nilpotent} Lie algebra; then there exists an ordered basis $\{X_i\}$ of $\frg$ such that
 \begin{equation}\label{eq:star}
 [X_i,X_j]=\sum_{k>i,j}a_{ij}^kX_k\, .
 \end{equation}
where $\{a_{ij}^k\}$ are called \emph{structure constants} of $\frg$. Therefore the differential can be written as
\begin{equation}\label{eq:224-2}
 dx_k= - \sum_{i,j<k}a_{ij}^k x_ix_j \,;
\end{equation}
where we write $x_ix_j\coloneqq x_i\wedge x_j$. This means that the Chevalley-Eilenberg complex associated to a nilpotent Lie algebra is a
\emph{minimal algebra generated in degree $1$}. Therefore, to study minimal algebras
generated in degree $1$ is equivalent to study nilpotent Lie algebras.

We want to rephrase the 2-step nilpotency condition on a nilpotent Lie algebra in the language of minimal algebras generated in degree 1. Let $(\L V, d)$ be a minimal CDGA over a field $\kk$. Its {\em characteristic filtration} $W_0 \subset W_1 \subset \ldots \subset V$ is defined as 
\[
W_0\coloneqq \ker(d)\,, \quad W_k\coloneqq d^{-1}(\L^2 W_{k-1}) \ \tr{for } k \ge 1\,. 
\]

The minimality condition implies that $W_k=V$ for some $k$. The {\em length} of the characteristic filtration is the minimal integer $n$ such that $W_n=V$. By \cite[Lemma 3]{B}, to study $n$-step nilpotent Lie algebras is equivalent to study minimal algebras generated in degree 1 whose characteristic filtration has length $n$.

% We are thus naturally led to study 8-dimensional minimal algebras generated in degree 1 whose characteristic filtration has length $2$. 
Therefore, in this paper we classify 2-step nilpotent Lie algebras in dimension 8 by classifying 8-dimensional minimal algebras generated in degree $1$ whose characteristic filtration has length 2. We give a complete and explicit list of all such minimal algebras defined over $\kk$, producing one explicit representative of each isomorphism class.

\subsection{Characteristic filtration}

Let $(\L V, d)$ be an eight dimensional minimal CDGA over a field $\kk$ whose characteristic filtration has length $2$, i.e. $W_1=V$. Let $F_0=W_0$, and $F_1=W_1/W_0$. We can view $F_1$ as a subspace of $V$ by selecting (non-canonically) a subspace $F_1 \subset V$ such that $V=W_1=W_0 \oplus F_1$.

Consider the differential restricted to $F_1 \subset V$, so in particular $d\colon F_1 \to \L^2 W_0$. Although the space $F_1$ is chosen non-canonically, its image under the differential, $\tr{Im}(d)$ is canonically determined, in particular, independent of the choice of $F_1$. Recall that $W_0=\ker(d)$, hence
\begin{equation} \label{eq:d-injective}
    d\colon F_1 \to \L^2 W_0
\end{equation}
is injective. In particular, the dimension of $F_1$ cannot be greater that the dimension of $\L^2 W_0$. Let us denote $f_i=\dim F_i$. We distinguish cases according to the numbers $f_0, f_1$.
We may denote the different cases by the pair $(f_0,f_1)$. The above properties yield $f_0+f_1=\dim V=8$, and $f_1 \le {f_0 \choose 2}=\dim \L^2 W_0$. This only allows only the following cases:
\[
(f_0,f_1)\in\{(8,0),(7,1),(6,2),(5,3),(4,4)\}\,.
\]

\subsection{Rank of a bivector}
Let $V$ a vector space with $\dim V=n$.
Given $\f \in \L^2 V$, we can view $\f$ as a bilinear form $\f: V^* \times V^* \to \kk$, or equivalently as a linear map $\f^\#: V^* \to V$. The rank of $\f$ is defined as the rank of $\f$ as a bilinear form, or equivalently the rank of $\f^\#$ as a linear map (the dimension of its image). Recall the following elementary result, which gives the canonical form of a skew-symmetric bilinear form.

\begin{lemma}
Let $V$ be a vector space of dimension $n$.
Any $\f \in \L^2 V$ has even rank $2r \le n$. Moreover, $\rank \f=2r$ if and only if there exist linearly independent vectors $x_1,y_1, \ldots , x_k, y_k$ such that $\f=x_1  y_1+ \dots + x_r  y_r$.
\end{lemma}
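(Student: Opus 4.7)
The statement is the classical symplectic normal form for an alternating $2$-form, so I would prove it by induction on $n=\dim V$. The base case $n\le 1$ forces $\Lambda^2 V=0$, and there is nothing to show (take $r=0$).

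For the inductive step I would proceed constructively. If $\varphi=0$, take $r=0$. Otherwise write $\varphi=\sum_{i<j}c_{ij}\,e_i\wedge e_j$ in some basis $e_1,\dots,e_n$ of $V$. Some $c_{ij}\neq 0$, and after relabeling and rescaling I may assume $c_{12}=1$. Grouping the terms that involve $e_1$ or $e_2$ gives
\[
\varphi \;=\; e_1\wedge e_2 \,+\, e_1\wedge A \,+\, B\wedge e_2 \,+\, \varphi'',
\]
where $A,B\in\text{span}(e_3,\dots,e_n)$ and $\varphi''\in\Lambda^2\text{span}(e_3,\dots,e_n)$. The key computation is the substitution $e_1':=e_1-B$, $e_2':=e_2+A$, which yields
\[
e_1'\wedge e_2' \;=\; e_1\wedge e_2 + e_1\wedge A + B\wedge e_2 - B\wedge A,
\]
so that $\varphi=e_1'\wedge e_2' + \varphi'''$ with $\varphi'''\in\Lambda^2\text{span}(e_3,\dots,e_n)$. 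Since $e_1',e_2',e_3,\dots,e_n$ is still a basis of $V$, the inductive hypothesis applied to $\varphi'''$ on the $(n-2)$-dimensional space $\text{span}(e_3,\dots,e_n)$ produces an expression $\varphi'''=x_2\wedge y_2+\dots+x_r\wedge y_r$ with $x_2,y_2,\dots,x_r,y_r$ linearly independent there. Setting $x_1:=e_1'$, $y_1:=e_2'$ then gives the desired decomposition $\varphi=x_1\wedge y_1+\dots+x_r\wedge y_r$ with $x_1,y_1,\dots,x_r,y_r$ linearly independent in $V$.

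It remains to relate this to the rank. Given the normal form, complete $x_1,y_1,\dots,x_r,y_r$ to a basis of $V$; in the dual basis, the matrix of $\varphi^\#\colon V^*\to V$ is block diagonal with $r$ copies of $\bigl(\begin{smallmatrix}0 & 1 \\ -1 & 0\end{smallmatrix}\bigr)$ and a zero block, so $\rank\varphi=2r$. This simultaneously shows that the rank is always even and that the integer of summands in any such decomposition equals half the rank, giving both directions of the "if and only if".

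The proof is essentially a bookkeeping induction; the only mildly delicate point is the change-of-basis step guaranteeing that after isolating one $x_1\wedge y_1$ summand the remainder really lies in a complementary subspace of dimension $n-2$. Once that is verified the rest is immediate, and no extra hypothesis on $\kk$ (beyond being a field) is needed.
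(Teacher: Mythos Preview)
The paper does not actually prove this lemma; it is stated as the elementary canonical form of a skew-symmetric bilinear form and immediately followed by a remark, with no proof given. Your inductive argument is the standard one and is correct in outline.

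There is one small computational slip to fix: with your choice $e_1':=e_1-B$, $e_2':=e_2+A$ one has
\[
e_1'\wedge e_2'=(e_1-B)\wedge(e_2+A)=e_1\wedge e_2+e_1\wedge A-B\wedge e_2-B\wedge A,
\]
which does \emph{not} cancel the $B\wedge e_2$ term in $\varphi$. The correct substitution is $e_1':=e_1+B$ (or equivalently flip the sign in your definition of $B$), giving $e_1'\wedge e_2'=e_1\wedge e_2+e_1\wedge A+B\wedge e_2+B\wedge A$ and hence $\varphi=e_1'\wedge e_2'+(\varphi''-B\wedge A)$ with the remainder in $\Lambda^2\operatorname{span}(e_3,\dots,e_n)$, as you intended. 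With that correction the induction goes through and the rank computation at the end is fine.
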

\begin{remark} \label{rem:action-GL(W)-on-bivectors}
The above shows that for any pair of bivectors $\f, \p \in \L^2 V$ of the same rank, there exists a linear automorfism $f \in \GL(V)$ so that $\rho(f) (\f)=\p$ via the canonical representation 
\[
\rho\colon \GL(V) \to \GL(\L^2 V) \, .
\] 
\end{remark}

\noindent Recall that if the we view bivectors as skew-symmetric bilinear maps on $V^*$ and represent these bivectors as anti-symmetric matrices (via the choice of a basis in $V$), then a matrix $P \in \GL(n,\kk)$ acts on skew-symmetric matrices $A \in \mathfrak{o}(n)$ is $\rho(P) A=P^t A P$, with $P^t$ the transpose matrix.

\noindent Note that the image of $\f^\#$ is precisely the linear space in $V$ generated by $x_1,y_1,\dots, x_k,y_k$. 
% so this subspace is independent of the choice of the vectors $x_i, y_i$.
Hence to every $\f$ of rank $2r$ we can canonically associate a subspace of $V$ of dimension $2r$. This fact will be important in the sequel.
\begin{definition}
We define the subspace associated to a bivector $\f \in \L^2 V$ as
\[
U_{\f}= \{ \f(u, \cdot): u \in V^*\} \subset V
\]
where we interpret $V=(V^*)^*$. To lighten notation, if we have more bivectors $\f_1,\f_2,\ldots$ we will denote by $U_1,U_2,\ldots$ the associated subspaces.
\end{definition}

\noindent Suppose $\f=\sum_{i<j} a_{ij} x_i x_j$ in some basis $x_i$ of $V$, and denote $u_i \in V^*$ the dual basis. Then $U_{\f}$ is generated by $\f(u_k,\cdot)= -\sum_{i<k} a_{ik} x_i + \sum_{k<j} a_{kj} x_j$, with $1 \le k \le n=\dim V$.
Recall that $\dim U_{\f}= \mathrm{rank}(\f)$.

\subsection{Classification}

Since $V=F_0 \oplus F_1$, there is an adapted basis $x_1, \dots x_{8}$ of $V$ so that $F_0=\ker d=\la x_1, \dots x_{f_0} \ra$ and $F_1=\la x_{f_0+1} \dots x_8 \ra$, and for any $f_0+1 \le f_0+k \le 8$ we have 
\[
\f_k=d x_{f_0+k} =\sum_{1 \le i<j\le f_0} a^k_{ij} x_i x_j \in \L^2 F_0 \, , \text{ for } 1 \le k \le f_1
\]
for some constants $a^k_{ij}$, the \emph{structure constants}.
Two minimal algebras $(\L V,d)$ and $(\L V',d')$ are isomorphic (by definition) if there is an isomorphism $V \cong V'$ that commutes with the differentials. Equivalently, $(\L V,d)$ and $(\L V',d')$ are isomorphic if we can find adapted bases $\{x_i\}$ of $V$, $\{x'_i\}$ of $V'$ (as above) with the same structure constants, so that the map sending $x_i$ to $x'_i$ is an isomorphism. 

\noindent Classifying these algebras consists in finding, for each isomorphism class, a basis $\{x_1,\dots,x_8\}$ of $V$ with the structural constants $a^k_{ij}$ as simple as possible. This is equivalent to finding a representative for the subspace $\tr{Im}(d) \subset \L^2 W_0$ under the action of $\GL(W_0)$ on $\L^2 W_0$. Recall that the differential in \eqref{eq:d-injective} is injective, hence the choice of a basis $\{u_1,\ldots,u_{f_1}\}$ of $F_1$ gives a basis $\{\f_1,\ldots \f_{f_1}\}$ of $\tr{Im}(d)$. In the presence of a basis $\{x_1,\ldots,x_{f_0}\}$ of $F_0=W_0$, two actions come into play:
\begin{itemize}
    \item the action of $\GL(F_1)$ on $\tr{Im}(d)$, changing $\{\f_k\}$ to $ \{\f'_k\}$.
    \medskip
    \item the action of $\GL(W_0)$ on $\L^2 W_0$, induced by a change of basis $\{x_i\} \mapsto \{x'_i\}$ in $W_0$.
\end{itemize}
In order to obtain a classification, we must find a basis of $W_0$ and a basis of $F_1$ so that $\tr{Im}(d)$ admits generators $\{\f_k\}$ as simple as possible. More precisely, we must find a suitable representative of the orbit $\GL(W_0) \cdot \tr{Im}(d) \subset \mathrm{Gr}(\L^2 W_0, f_1)$ in the Grassmannian of subspaces of $\L^2 W_0$ of dimension $f_1$, and suitable generators $\{\f_1,\ldots,\f_{f_1}\}$ whose expressions are simple in the sense that they involve the least number of sums and products, depending on its rank. Special changes of bases are given by homothethies, hence we will often work in the projectivization of these spaces. 

\subsection{Easy cases}\label{easy:cases}
Unless otherwise stated, from now on we suppose that the base field $\kk$ is \emph{algebraically closed and has characteristic $\ne 2, 3$}. 
Let us briefly comment the easiest cases, which are the algebras with $(f_0,f_1)$ equal to $(8,0)$ and $(7,1)$.

\noindent \textbf{Case (8,0)} In this case $W_0=V$ so the differential is identically zero and we have the trivial minimal algebra with $d=0$.

\noindent \textbf{Case (7,1)} In this case $f_0=7$, $f_1=1$, $d\colon F_1 \to \L^2 W_0$ is injective, so $\tr{Im}(d)=\la\f\ra$. We have three cases according to the rank of $\f$.
\begin{enumerate}
    \item If $\rank \f=2$ then $\f= x_1x_2 $ for some $x_1, x_2 \in W_0$ linearly independent. We complete to a basis $\{x_1,x_2,x_3,\dots, x_7\}$ of $W_0$, and we select $x_8 \in F_1$ so that $dx_8=\f=x_1x_2$. Hence we have a basis $\{x_1,\ldots,x_8\}$ of $V$ with $dx_8=x_1x_2$ and the rest of differentials zero.
    \item If $\rank \f=4$ then $\f= x_1x_2+x_3x_4$ for some linearly independent vectors $x_i \in W_0$, $i=1,\ldots,4$. As above, we complete it to a basis $\{x_1,\ldots, x_8\}$ of $V$ with $dx_8=x_1x_2+x_3x_4$ and $d\equiv 0$ on the remaining generators.
    \item If $\rank \f=6$ then by an analogous reasoning we get a basis $\{x_1,\ldots, x_8\}$ of $V$ with $dx_8=x_1x_2+x_3x_4+x_5x_6$ and $d\equiv 0$ on the remaining generators.
\end{enumerate}

\section{Case (6,2)}\label{sec:(6,2)}

\noindent We have that $f_0=6, f_1=2$; by \eqref{eq:d-injective} the differential determines a $2$-dimensional subspace $\tr{Im}(d)\subset \L^2 W_0$.
Note that $\dim W_0=6$, so $\dim \L^2 W_0=15$, and the rank of a non-zero bivector can be $2$, $4$ or $6$. Consider the projectivization $\PP^{14}=\PP (\L^2 W_0)$. The subspace $\tr{Im}(d)$ gives a line $\ell=\PP(\tr{Im}(d)) \subset \PP^{14}$. We need to study the possible ranks of the points of $\ell$, that is, the possible relative positions of $\ell$ with respect to the varieties of $\PP(\L^2 W_0)$ given by the bivectors of rank $2$, $4$, and $6$. 

\subsection{Stratification of $\L^2 W_0$}

Denote $W_0=W$. We study the stratification by rank in $\L^2 W$, for $W$ a vector space over a field $\kk$.
Note that a bivector $\f$ has rank $\le 2$ if and only if $\f^2=0$, and it has rank $\le 4$ if and only if $\f^3=0$. The condition of having rank $6$ is given by $\f^3 \ne 0$, hence it is open. 
The set of rank-$2$ and rank-$4$ bivectors are affine algebraic varieties of $\L^2 W$. Indeed, fix any basis $\{x_1,\dots,x_6\}$ of $W$ and consider the basis $\{x_ix_j \mid i \ne j \}$ of $\L^2 W$. 
A bivector $\f=\sum a^{ij}x_ix_j$ satisfies $\f^2=0$ iff all the coefficients of products of type $x_ix_jx_kx_l$ for $i<j<k<l$ vanish, and this is equivalent to the vanishing of $\binom{6}{4}=15$ equations of the form 
\[
a^{ij}a^{kl}-a^{ik}a^{jl}+a^{il}a^{jk}=0 \, , 1\le i<j<k<l \le 6 .
\]
Alternatively, this set can be seen as the Grassmannian of projective lines in $\PP(W)$, via the Plücker embedding.
On the other hand, the condition $\f^3=0$ is equivalent to the vanishing of a single cubic equation,
\begin{align}\label{eq:cubic}
    0=a_{12}a_{34}a_{56} - a_{13}a_{24}a_{56} + a_{14}a_{23}a_{56} - a_{15}a_{23}a_{46} + a_{16}a_{23}a_{45} + \nonumber \\
-a_{12}a_{35}a_{46} + a_{13}a_{25}a_{46} - a_{14}a_{25}a_{36} + a_{15}a_{24}a_{36} - a_{16}a_{24}a_{35} + \nonumber\\
+a_{12}a_{36}a_{45} - a_{13}a_{26}a_{45} + a_{14}a_{26}a_{35} - a_{15}a_{26}a_{34} + a_{16}a_{25}a_{34}\, .
\end{align} 
In the projectivization $\PP(\L^2W)=\PP^{14}$ we have the stratification by rank with strata:
\begin{align*}
    \GG& = \{ \f \in \L^2 W \mid \f^2=0 \} \, ; \\
    \cC & =\{ \f \in \L^2 W \mid \f^3=0 \}\, .
\end{align*}
with $\cC$ a cubic projective hypersurface in $\PP^{14}$, known as the {\em Pfaffian hypersurface}, and $\GG$ can be identified with the set of vector $2$-planes of $W$, i.e. the Grassmannian $\mathrm{Gr}(2,6)$, or equivalently the Grassmannian $\GG(1,5)$ of projective lines of $\PP^5=\PP(W)$.

\noindent Clearly, the group $\PGL(W)$ acting on $\PP(\L^2W)=\PP^{14}$ preserves the stratification given by rank. In particular, it preserves both the cubic $\cC$ and the Grassmannian $\GG$. There are three orbits for the action of $\PGL(W)$ on $\PP(\L^2 W)$. If we select a basis $\{x_1,\dots,x_6\}$ of $W$, these are: 
\begin{itemize}
\item the orbit of $[x_1x_2]$: the Grassmannian $\GG(1,5)=\GG$. 
\item the orbit of $[x_1x_2+x_3x_4]$: the set $\cC \setminus \GG$ of rank-$4$ bivectors.
\item the orbit of $[x_1x_2+x_3x_4+x_5x_6]$: the set $\PP^{14} \setminus \cC$ of rank-$6$ bivectors.
\end{itemize}

\begin{proposition} \label{prop:C-is-singular}
With notations as above, $\GG(1,5) \subset \PP^{14}$ is a smooth variety of dimension $8$ and degree $14$, and $\cC$ is an irreducible cubic hypersurface of $\PP^{14}$ whose set of singular points is $\GG(1,5)$.
\end{proposition}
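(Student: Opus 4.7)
The plan is to split the proposition into its three claims (smoothness/dimension/degree of $\GG$, irreducibility and hypersurface property of $\cC$, and identification of $\mathrm{Sing}(\cC)$) and argue each using the $\PGL(W)$-orbit structure on $\PP^{14}$ together with standard facts about Pfaffians.

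\emph{Dimension and degree of $\GG$.} First I would recall that $\GG=\mathrm{Gr}(2,6)$, embedded in $\PP^{14}$ via the Plücker map, is a classical smooth projective variety of dimension $2\cdot(6-2)=8$. For the degree I would either quote the standard Schubert-calculus formula $\deg\mathrm{Gr}(2,n)=\frac{(2n-4)!}{(n-2)!\,(n-1)!}$, which for $n=6$ gives $8!/(4!\,5!)=14$, or compute it directly as the self-intersection $\sigma_1^8$ using Pieri's rule.

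\emph{$\cC$ is an irreducible cubic hypersurface.} It is defined by the single cubic equation \eqref{eq:cubic}, so it has pure codimension one in $\PP^{14}$. For irreducibility I would use Remark \ref{rem:action-GL(W)-on-bivectors}: all rank-$4$ bivectors form a single $\PGL(W)$-orbit, so $\cC\setminus\GG$ is irreducible, and the one-parameter family $\f_t=x_1x_2+t\,x_3x_4$ (rank $4$ for $t\ne 0$, rank $2$ for $t=0$) shows $\GG\subset\overline{\cC\setminus\GG}$. Hence $\cC$ is the closure of an irreducible set, and is itself irreducible.

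\emph{Singular locus of $\cC$.} The key point is that $\PGL(W)$ acts on $\PP^{14}$ preserving the rank stratification, so $\mathrm{Sing}(\cC)$ is a closed $\PGL(W)$-stable subvariety of $\cC$, hence a union of orbits. Since $\cC$ is defined by an irreducible polynomial it is reduced, so the singular locus is a proper closed subset. As $\cC\setminus\GG$ is a single open dense orbit, the only options are $\mathrm{Sing}(\cC)=\emptyset$ or $\mathrm{Sing}(\cC)=\GG$. I would distinguish between these by testing one representative of each orbit against the gradient of the cubic, using the formula
\[
\partial_{ij}\,\mathrm{Pf}(A)=\pm\,\mathrm{Pf}\bigl(A_{\hat i\hat j}\bigr),
\]
where $A_{\hat i\hat j}$ is the $4\times 4$ skew matrix obtained by deleting rows and columns $i,j$. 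At $[x_1x_2]$ every such $4\times 4$ minor contains at most one nonzero entry, so all Pfaffians vanish and the point is singular; at $[x_1x_2+x_3x_4]$ the minor $A_{\hat 5\hat 6}$ has Pfaffian $a_{12}a_{34}=1\neq 0$, so the point is smooth. Combined with the orbit dichotomy this gives $\mathrm{Sing}(\cC)=\GG$.

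The main obstacle—inasmuch as there is one—is the concrete Pfaffian-minor computation establishing $\GG\subset\mathrm{Sing}(\cC)$; the rest of the argument is essentially orbit bookkeeping once one accepts the classical smoothness/degree data for $\mathrm{Gr}(2,6)$.
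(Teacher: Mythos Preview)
Your proof is correct and shares the paper's overall strategy of exploiting the $\PGL(W)$-orbit structure on $\PP^{14}$, but your irreducibility argument is genuinely different. The paper proceeds by a case analysis on the possible factorizations of a cubic (smooth quadric plus hyperplane, three hyperplanes, a double hyperplane plus a hyperplane, a triple hyperplane), ruling each out because the factors would yield extra $\Aut(\cC)$-invariant subvarieties incompatible with the two-orbit decomposition. Your route---$\cC\setminus\GG$ is a single orbit, hence irreducible, and $\GG$ lies in its closure via $x_1x_2+t\,x_3x_4$---is shorter and more conceptual; it also generalizes cleanly to higher Pfaffian loci. One small wrinkle: irreducibility of $\cC$ as a \emph{set} does not by itself force the defining cubic to be an irreducible \emph{polynomial} (a priori it could be $p^3$ for $p$ linear), so your sentence ``since $\cC$ is defined by an irreducible polynomial it is reduced'' is slightly ahead of itself. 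The cleanest fix is to drop that clause and observe that your smooth-point check at $[x_1x_2+x_3x_4]$ already excludes $\mathrm{Sing}(\cC)=\cC$, after which the orbit-closure dichotomy gives $\mathrm{Sing}(\cC)\in\{\emptyset,\GG\}$ directly. The singular-locus computation itself is essentially the same as the paper's, which also tests $[x_1x_2]$ against the partials of \eqref{eq:cubic}; your Pfaffian-minor formulation is just a tidier way to organize that check.
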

\begin{proof}
The smoothness, dimension and degree of $\GG(1,5)$ follow by the usual properties of the Plücker embedding, see for instance \cite[p. 245]{Harris}. Let us see the claims about the cubic. Recall the action $\rho \colon \GL(W) \to \GL(\L^2 W)$, which preserves $\cC$ so for any $f \in \GL(W)$ we have $\rho(f)|_\cC$ an automorphism of $\cC$. This shows that the action of the automorphism group of $\cC$ has two orbits: a dense orbit given by $\cC \setminus \GG$, and $\GG$. It follows that $\cC$ is irreducible by an easy case analysis:
\begin{itemize}
    \item if $\cC=Q \cup H$ is a smooth quadric and a hyperplane, then $\Aut(\cC)$ would preserve $Q$ and $H$, impossible.
    \item if $\cC$ decomposes as $H_1 \cup H_2 \cup H_3$ with $H_i$ hyperplanes, then $\Aut(\cC)$ would preserve the intersection $H_1 \cap H_2 \cap H_3$.
    \item if $\cC=H_1^2 \cup H_2$, $\Aut(\cC)$ would preserve the intersection $H_1 \cap H_2$.
    \item lastly, $\cC$ cannot be $H^3$, because $\cC$ is not an hyperplane set-theoretically, for instance $x_1x_2+x_3x_4$ and $x_5x_6$ are in $\cC$, but its linear combinations are not.
\end{itemize}
This proves that $\cC$ is irreducible, so its singular points form a subvariety of positive codimension. No point $\f \in \cC \setminus \GG$ can be singular, as $\Aut(\cC)$ is transitive in this dense open set. Since $\Aut(\cC)$ acts transitively also in $\GG \subset \cC$, we are finished by showing that any particular point of $\GG$ is singular. For instance, we take the point $x_1x_2$ with coordinates $a_{12}=1, a_{ij}=0$ and we easily obtain that all partial derivatives of the equation \eqref{eq:cubic} vanish at this point.
\end{proof}

\subsection{Relative positions.}

We aim to study the relative positions of a line $\ell$, the rank-$2$ stratum $\GG=\GG(1,5)$, and $\cC$ inside $\PP^{14}=\PP(\L^2 W)$. 
For each relative position, we shall give a model for the corresponding minimal algebra by selecting generators of $\ell$ as simple as possible when expressed with respecto to a suitable basis.

\noindent Recall that if $\kk$ is algebraically closed, then either $\ell \subset \cC$ or $\ell \cap \cC$ consists of three points counted with multiplicity by Bezout's theorem. Obviously, a general line of $\PP^{14}$ is not contained in $\cC$. However, $\cC$ contains many lines: for instance any line generated by two rank-$2$ bivectors.

\noindent In fact, consider any $4$-dimensional subspace $Y \subset W$, so $\PP^5 \cong\PP(\L^2 Y) \subset \PP(\L^2 W)=\PP^{14}$. We can embed $\GG(1,3)$ in $\PP(\L^2 Y) \cong \PP^5$ via the Plücker embedding, and we have $\GG(1,3)=\GG\cap\PP(\L^2 Y)$. The image of the Plücker embedding is the so-called \emph{Klein quadric}, which is a non-degenerate quadric in $\PP^5$, ruled by planes, i.e. it contains a pair of transversal $2$-planes at each of its points. In particular, the Klein quadric $\GG(1,3)$ contains many lines, and $\GG(1,3)\subset\GG\subset \cC$.

\begin{proposition}\label{prop:case62-rel-positions-l-subset-C}
Notations as above. Suppose that the line $\ell$ is contained in $\cC$. Then one and only one of the following occurs:
\begin{enumerate}
\item $\ell \subset \GG$. 
In this case $\ell$ is generated by $\f_1=x_1x_2$, $\f_2=x_1x_3$ in suitable coordinates of $V$.

\medskip

\item $\ell \cap \GG=\{p\}$ and $\ell$ is contained in some $\PP^5=\PP(\L^2 Y)$, for some $Y\subset W$ with $\dim Y=4$. In this case $\ell$ is generated by $\f_1=x_1x_2$, $\f_2=x_1x_3+x_2x_4$, in suitable coordinates.

\medskip

\item $\ell \cap \GG=\{p\}$ and $\ell$ is not contained in some $\PP^5=\PP(\L^2 Y)$, for any $Y\subset W$ with $\dim Y=4$. In this case $\ell$ is generated by $\f_1=x_1x_2$, $\f_2=x_1x_3+x_4x_5$, in suitable coordinates.

\medskip

\item $\ell \cap \GG=\{p,q\}$. In this case $\ell$ is generated by $\f_1=x_1x_2$ and $\f_2=x_3x_4$, in suitable coordinates.

\medskip

\item $\ell \cap \GG=\emptyset$, and $\ell$ is contained in some $\PP^9=\PP(\L^2 U)$, for some $U\subset W$ with $\dim U=5$. In this case $\ell$ is generated by $\f_1=x_1x_2+x_3x_4$ and $\f_2=x_1x_4+x_3x_5$ in suitable coordinates.

\medskip

\item $\ell \cap \GG=\emptyset$ and $\ell$ is not contained in some $\PP^9=\PP(\L^2 U)$, for any $U\subset W$ with $\dim U=5$. In this case $\ell$ is generated by $\f_1=x_1x_2+x_3x_4$, $\f_2=x_1x_5+x_3x_6$, in suitable coordinates.

\end{enumerate}
Each of the above relative positions determines the orbit of $\ell$ under the action of $\GL(W)$, as we have a standard model for each of them in suitable coordinates of $W$.
\end{proposition}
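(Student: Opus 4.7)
The plan is to classify $\ell \subset \cC$ by the cardinality of $\ell \cap \GG$ (which coincides with the set of rank-$2$ points of $\ell$, since every point of $\ell$ has rank $2$ or $4$) and, in the boundary cases, by the dimension of $U_1 + U_2$ for a chosen pair of generators. The main algebraic input is the translation of ``$\ell \subset \cC$'' via the expansion of $(a\f_1 + b\f_2)^3 = 0$ in $\L^6 W$: if $\f_1$ has rank $2$ and $\f_2$ rank $4$, the condition reduces to the single equation $\f_1 \wedge \f_2^2 = 0$, equivalent to $\dim(U_1 + U_2) \le 5$; if both $\f_i$ have rank $4$, it becomes the pair $\f_1^2 \wedge \f_2 = 0 = \f_1 \wedge \f_2^2$.

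In cases 1 and 4 I take rank-$2$ generators $\f_1 = x_1 x_2, \f_2 = y_1 y_2$ and use $(\f_1 + t\f_2)^2 = 2t\, \f_1 \wedge \f_2$ to conclude that $\ell \subset \GG$ iff $U_1 \cap U_2$ is a line (case 1: $\f_2 = x_1 x_3$), while $U_1 \cap U_2 = 0$ forces exactly two rank-$2$ points on $\ell$ (case 4: $\f_2 = x_3 x_4$). For $|\ell \cap \GG| = 1$ the unique rank-$2$ generator is $\f_1 = x_1 x_2$ and any other generator $\f_2$ has rank $4$; splitting by $\dim(U_1 + U_2) \in \{4, 5\}$ gives cases 2 and 3. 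In case 2 ($U_1 \subset U_2$), the plane $\la x_1, x_2\ra$ must be Lagrangian in $(U_2, \f_2)$, since otherwise a decomposition $\f_2 = c\, x_1 x_2 + y_1 y_2$ would give a second rank-$2$ point $[\f_2 - c\f_1] \in \ell \cap \GG$; a Lagrangian complement then yields $\f_2 = x_1 x_3 + x_2 x_4$. In case 3, pick $x_1 \in U_1 \cap U_2$ and extend to a symplectic basis of $U_2$ using the transitivity of $\Sp(U_2, \f_2)$ on non-zero vectors, giving $\f_2 = x_1 x_3 + x_4 x_5$.

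For $\ell \cap \GG = \emptyset$ (cases 5 and 6), both generators are rank $4$. The subcase $U_1 = U_2$ is excluded, because $\ell \subset \PP(\L^2 U_1) \cong \PP^5$ would meet the rank-$2$ Klein quadric in at least one point over an algebraically closed field, contradicting $\ell \cap \GG = \emptyset$. Hence $K = U_1 \cap U_2$ has dimension $2$ or $3$. In case 5 ($\dim K = 3$), the cubic conditions $\f_i \in K \wedge U_i$ hold automatically since $\L^2(U_i/K) = 0$; the standard form $\f_1 = x_1 x_2 + x_3 x_4, \f_2 = x_1 x_4 + x_3 x_5$ emerges from aligning the bases with the $1$-dim radicals $R_i \subset K$ of $\f_i|_K$, each of rank $2$ because a $3$-dim subspace cannot be isotropic in a $4$-dim symplectic space by dimension count. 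In case 6 ($\dim K = 2$), the cubic conditions $\f_i \in K \wedge U_i$ become genuine constraints; combined with changes of basis within $K$ and within complements of $K$ in $U_1, U_2$, they yield the standard form $\f_1 = x_1 x_2 + x_3 x_4, \f_2 = x_1 x_5 + x_3 x_6$, in which $K = \la x_1, x_3\ra$ is Lagrangian for both $\f_1|_{U_1}$ and $\f_2|_{U_2}$.

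The main obstacle I expect is case 5. The radicals $R_1, R_2 \subset K$ depend on the choice of generators rather than on $\ell$ intrinsically, and a priori they may coincide; the argument must first show that a generic replacement $\f_2 \leftarrow \f_2 - a\f_1$ renders $R_1 \neq R_2$ (verified by a direct Pfaffian calculation confirming that $\ell \cap \GG$ remains empty under this shift), and then that picking $x_1 \in R_1$, $x_3 \in R_2$, a third vector $x_4 \in K$, and extensions $x_2 \in U_1 \setminus K$, $x_5 \in U_2 \setminus K$ brings the pencil to the standard form after absorbing residual scalar parameters via rescaling and further changes of basis in the complements of $K$.
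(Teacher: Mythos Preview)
Your argument is correct and takes a genuinely different route from the paper. The paper proceeds entirely by brute-force coordinate elimination: in each subcase it fixes $\f_1$ in normal form, writes $\f_2$ with all unknown coefficients, and kills them one by one through explicit substitutions, using the hypothesis (no extra rank-$2$ point, or no rank-$6$ point) only at the end to force residual constants to vanish. You instead extract the content of $\ell\subset\cC$ at the outset via $(a\f_1+b\f_2)^3=0$, translate it into the subspace conditions $\f_i\in U_j\wedge W$, and then argue with symplectic linear algebra: the Lagrangian/symplectic dichotomy for $U_1\subset U_2$ in case~2, transitivity of $\Sp(U_2,\f_2)$ in case~3, and the radicals $R_i\subset K$ of the induced forms $\omega_i|_K$ in case~5. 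This is considerably shorter and more transparent than the paper's page-long coordinate chases, particularly in cases 2, 3 and 6; the paper's method, on the other hand, is entirely elementary and doubles as an explicit reduction algorithm.

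One point in case~5 can be sharpened. Your parenthetical ``verified by a direct Pfaffian calculation confirming that $\ell\cap\GG$ remains empty under this shift'' is a non-sequitur: emptiness of $\ell\cap\GG$ is a property of $\ell$, unaffected by the shift, and says nothing about whether $R_1(a)\neq R_2(a)$. In fact no genericity is needed at all: $R_1=R_2$ is \emph{impossible} under the hypothesis $\ell\cap\GG=\emptyset$. If $R_1=R_2=\langle r\rangle$, choose a symplectic basis $r=x_1,x_2,x_3,x_4$ of $(U_1,\omega_1)$ with $K=\langle x_1,x_3,x_4\rangle$ and $\f_1=x_1x_2+x_3x_4$; since $x_1$ is also the radical of $\omega_2|_K$, the same normalisation on $U_2=K\oplus\langle x_5\rangle$ gives $\f_2=\alpha\,x_1x_5+\beta\,x_3x_4$ with $\alpha\beta\neq0$, and then $\beta\f_1-\f_2=x_1(\beta x_2-\alpha x_5)$ has rank~$2$, contradicting $\ell\cap\GG=\emptyset$. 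So you may drop the genericity step entirely and proceed directly with $x_1\in R_1$, $x_3\in R_2$ distinct.
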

\begin{proof}
An easy calculation using the homogeneous coordinates $[a_{12}:\dots: a_{56}]$ of $\PP(\L^2 W)=\PP^{14}$ shows that the six proposed models for a line $\ell \subset \cC$ satisfy the corresponding relative positions of items $(1)$-$(6)$. We need to show the opposite, namely that, under the condition $\ell \subset \cC$, the six relative positions of $\ell$ with $\GG$ and with the linear spaces $\PP^5$ and $\PP^9$ listed above exhaust all the possibilities, and that each one determines a unique model, i.e. a unique orbit for $\ell$ under the $\PGL(W)$ action.

\noindent We shall do this by simplifying an initial model, via changes of basis of $V$ and changes of generators of $\ell$. To avoid cumbersome notation we denote by $\f_1, \f_2$ some generators for $\ell$, that may change along the process, and by $\{x_1,\ldots,x_6\}$ a basis for $W$ that may also change. We analyze the different cases, which are collected in Table \ref{table:6-2C_1}.

\item \textbf{Case 1.} Suppose that $\ell$ intersects $\GG$ in at least two points $p_1=[\f_1]$, $p_2=[\f_2]$. Denote $U_1$, $U_2$ the associated $2$-planes.

\item \textbf{Subcase 1.1.} If $U_1$, $U_2$ intersect in a line, then we can choose a suitable basis so that
\[
\left\{\begin{array}{ccl}
 \f_1 & = & x_1x_2\\
 \f_2 & = & x_1x_3\\
\end{array}\right.
\]
All linear combinations $a\f_1+b\f_2$ have rank $2$, so in fact $\ell \subset \GG$. 

\item \textbf{Subcase 1.2.} if $U_1 \cap U_2=\{0\}$, then we can choose a suitable basis so that
\[
\left\{\begin{array}{ccl}
 \f_1 & = & x_1x_2\\
 \f_2 & = & x_3x_4\\
\end{array}\right.
\]
All linear combinations $a\f_1+b\f_2$ with $ab\neq 0$ have rank $4$, and $\ell \cap \GG=\{p_1,p_2\}$.

\item \textbf{Case 2.} Assume that $\ell \cap \GG$ is a point $p_1=[\f_1]$ with associated $2$-plane $U_1$. Select another point $p_2=[\f_2] \in \ell$ of rank $4$, with associated $4$-plane $U_2$. 
If $U_1 \cap U_2=\{0\}$, then in a suitable basis we would have $\f_1=x_5 x_6$, $\f_2=x_1 x_2 + x_3 x_4$. But in this case the line $\ell$ would contain bivectors of rank $6$, and this cannot happen, since $\ell\subset\cC$. We have the following subcases.

\item \textbf{Subcase 2.1.} Assume $U_1 \subset U_2$, so $\ell \subset \PP(\L^2 U_2)=\PP^5$. Choose a basis $\{x_1,x_2\}$ of $U_1$ so that $\f_1=x_1x_2$ and complete it to a basis $\{x_1,x_2,x_3,x_4\}$ of $U_2$. Then 
\[
\f_2=x_1(ax_2 + bx_3+cx_4) + x_2 (ex_3+fx_4) + gx_3x_4
\]
for some constants $a,b,c,e,f,g$. By setting $\f'_2=\f_2-a\f_1$ we can achieve $a=0$. Either $b$ or $c$ are non-zero, otherwise $U_2$ would have dimension less than $4$; hence, we can assume $b \ne 0$ by permuting $x_3,x_4$ if necessary. Rescale $x_3$ so that $b=1$ (i.e. change $x_3$ by $bx_3$), and make the change of basis $x'_3=x_3+cx_4$ so that $\f_2=x_1x'_3 + x_2 (ex'_3+fx_4) + gx'_3x_4$, for differents constants $e,f,g$. Note that $f\ne 0$, so by an analogous procedure with $x_4$ we can assume $f=1$ and consider $x'_4=x_4+ex'_3$. Reset notation $x'_3=x_3$, $x'_4=x_4$, and we get $\f_2=x_1x_3 + x_2 x_4 + gx_3x_4$. 
If it were $g \ne 0$ then we could assume $g=1$ by rescaling $x_3, x_1$ and $\f_1$, and then $\f_1+\f_2=x_1(x_2+x_3)+(x_2+x_3)x_4=(x_1-x_4)(x_2+x_3)$, a contradiction. Hence $g=0$ and we get to the model 
\[
\left\{\begin{array}{ccl}
 \f_1 & = & x_1x_2\\
 \f_2 & = & x_1x_3+x_2x_4\\
\end{array}\right.
\]

\item \textbf{Subcase 2.2.} If $\dim(U_1 \cap U_2)=1$ then $U\coloneqq U_1 + U_2$ has dimension $5$ and $\ell\subset \PP(\L^2U)=\PP^9$. Choose an initial basis with $\f_1=x_1x_2$, $U_2=\la x_1, x_3, x_4, x_5 \ra$, and
\[
\f_2=x_1(ax_3 + bx_4+cx_5) + x_3 (ex_4+fx_5) + gx_4x_5
\]
with one of $a,b$ or $c$ non-zero. By permuting $x_3$ with $x_4$ or $x_5$ we can assume $a \ne 0$, and rescale so that $a=1$. Consider $x'_3=x_3 + bx_4+cx_5$ and reset notation, so $\f_2=x_1x_3+x_3(ex_4+fx_5)+gx_4x_5$, with $g \ne 0$. Rescale to get $g=1$ and note that $\f_2=x_1x_3 + x_4(x_5+ex_3)+fx_3x_5$, so put $x'_5=x_5+ex_3$, reset notation, and $\f_2=x_1x_3+(x_4+fx_3)x_5$. Making a last change $x'_4=x_4+fx_3$ yields the model 
\[
\left\{\begin{array}{ccl}
 \f_1 & = & x_1x_2\\
 \f_2 & = & x_1x_3+x_4x_5\\
\end{array}\right.
\]

\item \textbf{Case 3.} Suppose that $\ell \subset \cC$ but $\ell\cap\GG=\emptyset$. Take two points $p_1=[\f_1]$, $p_2=[\f_2]$ in $\ell$, both with rank $4$. Notice that we cannot have $U_1=U_2$. Indeed, if this was the case, then $\ell \subset \PP^2(\L^2 Y)=\PP^5$ with $Y=U_1=U_2$. The rank-$2$ bivectors of this $\PP^5$ form the Klein quadric $\GG(1,3) \subset \PP^5$, and by Bezout's theorem $\ell$ intersects $\GG(1,3)$, in particular $\ell$ would contain points of rank $2$, which is absurd. The following subcases arise.

\item \textbf{Subcase 3.1.} If $\dim(U_1 \cap U_2)=3$, then $U\coloneqq U_1 + U_2$ has dimension $5$, so the line $\ell$ is contained in $\PP(\L^2 U)=\PP^9$. Take an initial basis so that $U_1=\la x_1,x_2,x_3,x_4 \ra$ and $\f_1=x_1x_2+x_3x_4$. We can assume (permuting the basis elements) that $x_4 \notin U_2$. The affine lines $x_i + \la x_4 \ra$, $i=1,2,3$, intersect $U_2$, since $U_2$ is a hyperplane of $U$. We can make a change of basis $x'_i=x_i+a_i x_4$ so that $x'_i \in U_2$. In the new basis we have $\f_1=x'_1x'_2+(x'_3-a_2x'_1+a_1x'_2)x_4$. With the further change of basis $x''_3=x'_3-bx'_1+ax'_2$, and resetting notation, we get $\f_1=x_1x_2+x_3x_4$ and $U_2=\la x_1,x_2,x_3, x_5\ra$. 
We have then
\[
\f_2=x_1(ax_2+bx_3+cx_5)+x_2(ex_3+fx_5)+gx_3x_5\,.
\]
Suppose $a=0$; since $\f_2$ has rank 4, we must have $ce-bf\ne 0$. As one of $c$ or $f$ is non-zero, we can assume $f\ne 0$ (permuting $x_1,x_2$ and changing the sign of $x_3, x_4$ if needed, in order to keep the expression of $\f_1$ fixed). Rescaling $x_5$ we get $f=1$. Consider the change $x'_5=ex_3+x_5$, and reset notation, so that, in the new basis, $\f_2=x_1(bx_3+cx_5)+x_2x_5+gx_3x_5$, with $b \ne 0$. Rescale $x_3, x_4$ 
 % \textcolor{blue}{(se podrían rescalar $x_3,x_4$ para evitar problemas sobre $\RR$, cuando $b<0$?)} NOTA: Cierto, mucho mejor
so that $b=1$, and put $\f_2=x_1x_3+(cx_1+x_2+gx_3)x_5$. Set $x'_2=cx_1+x_2+gx_3$, so that $\f_2=x_1x_3+x'_2x_5$ and $\f_1=x_1x'_2+x_3(gx_1+x_4)$. With $x'_4=gx_1+x_4$ we get the model
\[
\left\{\begin{array}{ccl}
 \f_1 & = & x_1x_2+x_3x_4\\
 \f_2 & = & x_1x_3+x_2x_5\\
\end{array}\right.
\]
If $a\neq 0$ we can assume $a=1$ by rescaling $\f_2$, hence
\[
\f_2=x_1(x_2+bx_3+cx_5)+x_2(ex_3+fx_5)+gx_3x_5
\]
with $g-bf+ce\neq0$. If $g=0$ then $ce-bf\neq0$; arguing as above, we can assume $f\neq 0$, and rescale $x_5$ to achieve $f=1$. Do the change $x'_5=ex_3+x_5$ and reset notation so that, in the new basis, $\f_2=x_1(x_2+bx_3)+(cx_1+x_2)x_5$. The change $x_2'=x_2+cx_1$ gives (upon renaming) $\f_2=bx_1x_3+x_2(x_5-x_1)$, with $b \ne 0$ as the rank of $\f_2$ is 4. The final change $x_5'=x_5-x_1$ leaves us with $\f_2=bx_1x_3+x_2x_5$. By suitably rescaling $x_3$ and $x_4$ we obtain $b=1$, hence again the above model. Finally, we treat the case $g \ne 0$. First, rescale $x_5$ to get
\[
\f_2=x_1x_2+bx_1x_3+cx_1x_5+ex_2x_3+fx_2x_5+x_3x_5=(x_1-ex_3)(x_2+bx_3)+(cx_1+fx_2+x_3)x_5
\]
with $1-bf+ce\neq 0$. Setting $x'_1=x_1-ex_3$, $x'_2=x_2+bx_3$, $x'_4=x_4+bx'_1+ex'_2$, and renaming variables we get $\f_1=x_1x_2+x_3x_4$ and $\f_2=x_1x_2+(cx_1+fx_2+(1-bf+ce)x_3)x_5$. In fact, rescaling $x_5$ we achieve $\f_2=x_1x_2+(cx_1+fx_2+x_3)x_5$. If $c=f=0$ we reach a contradiction, since $\f_1-\f_2$ has rank 2, which implies $\ell\cap\GG\neq\emptyset$. Upon switching $x_1$ and $x_2$ we can assume $f\neq 0$ and rescale $x_5, x_3, x_4$ so that $f=1$. Put $x'_2=x_2+cx_1+x_3$ and $x'_4=x_4+x_1$ so that, after renaming, $\f_1$ stays the same and $\f_2=x_1(x_2-x_3)+x_2x_5=-x_1x_3+x_2(x_5-x_1)$, and now write $-x'_5=x_5-x_1$, so that $-\f_2$ yields our sought model.

\item \textbf{Subcase 3.2.} Suppose $\dim(U_1 \cap U_2)=2$. Pick $x_1 \in U_1 \cap U_2$ and complete it to a basis $\{x_1,x_2,x_3,x_4\}$ of $U_1$ such that $\f_1=x_1x_2+x_3x_4$. We claim that $\dim(U_2 \cap \la x_3, x_4 \ra)=1$. By contradiction, suppose that $U_2 \cap \la x_3, x_4 \ra =\{0\}$. The affine plane $x_2 + \la x_3, x_4 \ra$ must intersect $U_2$, so we find $a,b$ such that $x'_2=x_2+ax_3+bx_4\in U_2$. Then, $\f_1=x_1x'_2+(x_3-bx_1)(x_4+ax_1)=x_1x'_2+x'_3x'_4$ and, resetting notation, we get $\f_1=x_1x_2+x_3x_4$ and $U_2=\la x_1, x_2, x_5, x_6 \ra$. Then
\[
\f_2=ax_1x_2+bx_1x_5+cx_1x_6+ex_2x_5+fx_2x_6+gx_5x_6
\]
with $ag-bf+ce\neq 0$. Since $\ell\subset\cC$, $\f_1+\f_2$ must have rank 4, hence
\[
0=(\f_1+\f_2)^3=3(\f_1^2\wedge\f_2+\f_1\wedge\f_2^2)=6(ag-bf+ce+g)x_1x_2x_3x_4x_5x_6\,,
\]
and $g=-(ag-bf+ce)\neq 0$. Setting $x_5'=cx_1+fx_2+gx_5$ and $x'_6=x_6-bx_1-ex_2$ we get
\[
\f_2=(ag-bf+ce)x_1x_2+x'_5x'_6\,.
\]
But then some linear combinations of $\f_1$ and $\f_2$ would have rank 6, violating the condition $\ell\subset\cC$. Therefore we can assume $\dim(U_2 \cap \la x_3, x_4 \ra)=1$. In this case we can arrange that $U_2=\la x_1, x_3, x_5, x_6 \ra$, and $\f_2$ has the form
\[
\f_2=g x_5x_6+ x_1(ax_3+bx_5+cx_6)+x_3(ex_5+fx_6)\,.
\]
If $g\ne 0$, then we can assume $g=1$ and we get that $\f_2+\f_1$ has rank $6$, a contradiction. Hence $g=0$, and $b$ or $c$ must be non-zero, so we can assume $b \ne 0$, $b=1$ after rescaling $x_5$, and change $x'_5=ax_3+x_5+cx_6$, so $\f_2=x_1x_5+x_3(ex_5+fx_6)$. As $x_6 \in U_2$, it must be $f \ne 0$, so we get $\f_2=x_1x_5+x_3x'_6$ with $x'_6=fx_6+ex_5$. We arrive at the model:
\[
\left\{\begin{array}{ccl}
 \f_1 & = & x_1x_2+x_3x_4\\
 \f_2 & = & x_1x_5+x_3x_6
\end{array}\right.
\]
\end{proof}

\noindent We collect in Table \ref{table:6-2C_1} the results of Proposition \ref{prop:case62-rel-positions-l-subset-C}, in which the case $\ell\subset\cC$ is handled.
\begin{itemize}
    \item The second column contains the relative position of $\ell$ with respect to $\GG\subset\cC$;
    \item the third column contains the dimension $\delta$ of a subspace $\PP(\L^2 U) \subset \PP^{14}=\PP(\L^2 W)$, for $U\subset W$ a subspace, in which $\ell$ is contained. Clearly $\delta\in\{2,5,9,14\}$;
    \item the fourth and fifth columns contain the differentials of the non-closed elements;
    \item the sixth column says whether the minimal algebra is {\em irreducible}, i.e.~it is not the sum of lower-dimensional minimal algebras; notice that irreducibility is equivalent to $\delta=14$;
    \item in case it is irreducible, the seventh column identifies our algebra with the Lie algebra in the list obtained in \cite{Ren2011}.
\end{itemize}

\begin{table}[h]
\caption{Minimal algebras of type $(6,2)$ with $\ell\subset\cC$}\label{table:6-2C_1}
{\tabulinesep=1mm
\begin{tabu}{|c|c|c|c|c|c|c|c|c|c|c|c|c|c|}
\hline
Label & $\ell\cap\GG$ & $\delta$ & $dx_7$ & $dx_8$ & Irreducible & \cite{Ren2011}\\
\hline
(6.2.1) & $\ell$ & 2 & $x_1x_2$ & $x_1x_3$ & $\times$ &\\
\hline
(6.2.2)& $\{p_1,p_2\}$ & 5 & $x_1x_2$ & $x_3x_4$ & $\times$ & \\
\hline
(6.2.3) & $\{p\}$ & 5 & $x_1x_2$ & $x_1x_3+x_2x_4$ & $\times$ & \\
\hline
(6.2.4) & $\{p\}$ & 9 & $x_1x_2$ & $x_1x_3+x_4x_5$ & $\times$ & \\
\hline
(6.2.5) & $\emptyset$ & 9 & $x_1x_2+x_3x_4$ & $x_1x_3+x_2x_5$ & $\times$ & \\
\hline
(6.2.6) & $\emptyset$ & 14 & $x_1x_2+x_3x_4$ & $x_1x_5+x_3x_6$ & \checkmark & $N_2^{8,2}$\\
\hline
\end{tabu}}
\end{table}

\begin{remark}
The last two models in Table \ref{table:6-2C_1} have also been obtained in \cite[Proposition 2]{Manivel-Mezzetti}.
\end{remark}

\noindent Now assume that $\ell$ is not contained in $\cC$. Since $\cC$ is a cubic hypersurface of $\PP^{14}$, $\ell \cap \cC$ consists of three points counted with multiplicity, by Bézout's theorem. These points might be in $\GG$.
If we parameterize the line $\ell$ as $p+vt$ in an affine chart of $\PP^{14}$ around a point $p \in \ell \cap \cC$, substitute this parametrization in the equation of $\cC$, we get $O(t^k)$, for some $k=1,2,3$, and this exponent $k$ is the multiplicity of intersection, denoted $I_p(\ell,\cC)$. Recall that the points of $\GG$ are singular points of $\cC$, hence a line $\ell\subset\PP^{14}$ through $p\in \GG$ has $I_p(\ell,\cC)\ge 2$. In particular, if $\ell$ is not contained in $\cC$, then $\ell$ cannot pass through more than $1$ point of $\GG$. This can easily be seen by using coordinates, as it is clear that the span of two rank-$2$ bivectors contains bivectors of rank at most $4$.

\begin{proposition} \label{prop:case62-rel-positions-l-not-contained-C}
Notations as above. Suppose that the line $\ell$ is not contained in $\cC$. Then one and only one of the following occurs:
\begin{enumerate}
\item $\ell \cap \cC=\{p_1,p_2,p_3\}$, $\ell \cap \GG=\emptyset$. 
We can take generators for $\ell$ of the form $\f_1=x_1x_2+x_3x_4$, $\f_2=x_3x_4+x_5x_6$, with $p_i=[\f_i]$, and $\f_3=\f_1-\f_2$

\medskip

\item $\ell \cap \cC=\{p_1,p_2\}$, with multiplicities $2,1$ respectively, and $\ell \cap \GG=\emptyset$. 
In this case, $\ell$ is generated by two bivectors of the form $\f_1=x_1x_2+x_3x_4$, $\f_2=x_3x_5+x_4x_6$.

\medskip

\item $\ell \cap \cC=\{p\}$, with multiplicity $3$, and $\ell \cap \GG=\emptyset$. 
We can choose generators for $\ell$ of the form $\f_1=x_1x_2+x_3x_4$, $\f_2=x_1x_5+x_2x_3+x_4x_6$.

\medskip

\item $\ell \cap \cC=\{p_1,p_2\}$ with multiplicities $2,1$ respectively, and $\ell \cap \GG=\{p_1\}$. The generators are of type $\f_1=x_1x_2$, $\f_2=x_3x_4+x_5x_6$.

\medskip

\item $\ell \cap \cC=\{p\}$ with multiplicity $3$, and $\ell \cap \GG=\{p\}$. The generators are $\f_1=x_1x_2$, $\f_2=x_1x_3+x_2x_4+x_5x_6$.
\end{enumerate}
Each of the above relative positions determines a standard model for $\ell$ in suitable coordinates, hence the orbit of $\ell$ under the action of $\GL(W)$. 
\end{proposition}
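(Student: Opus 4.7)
The plan is to follow the same template as Proposition \ref{prop:case62-rel-positions-l-subset-C}: first enumerate the possible relative positions via intersection theory, then verify that the six proposed models realize them, and finally reduce an arbitrary $\ell$ satisfying the given incidence data to the standard form by successive changes of basis. The enumeration is quick: since $\deg\cC=3$ and $\ell\not\subset\cC$, Bézout gives $\sum_{p\in\ell\cap\cC}I_p(\ell,\cC)=3$, so the multiplicity partition is $(1,1,1)$, $(2,1)$, or $(3)$. By Proposition \ref{prop:C-is-singular}, $\GG=\mathrm{Sing}(\cC)$, hence every $p\in\ell\cap\GG$ contributes multiplicity $\ge 2$; together with the observation (already made before the statement) that $\ell$ meets $\GG$ in at most one point, this leaves exactly the five combinations listed in $(1)$--$(5)$.

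Next I check that each proposed model realizes the claimed pattern. Parametrizing $\ell$ as $[a\f_1+b\f_2]$ and fixing the volume $x_1\cdots x_6$, the binary cubic $(a\f_1+b\f_2)^3\in\L^6 W\cong\kk\cdot x_1\cdots x_6$ cuts out $\ell\cap\cC$, while the condition $\rank(a\f_1+b\f_2)\le 2$ cuts out $\ell\cap\GG$. A direct computation yields, up to a nonzero scalar, factorizations proportional to $ab(a+b)$ in model $(1)$, $ab^2$ in models $(2)$ and $(4)$, and $b^3$ in models $(3)$ and $(5)$, the latter two distinguished by whether $\f_1$ has rank $4$ (off $\GG$) or rank $2$ (on $\GG$). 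In particular the advertised intersection patterns with both $\cC$ and $\GG$ match.

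For the converse, the argument runs case by case and parallels the proof of Proposition \ref{prop:case62-rel-positions-l-subset-C}. In cases $(1)$--$(3)$, $\ell$ misses $\GG$, so I pick any rank-$4$ point $\f_1\in\ell\cap\cC$, put it in canonical form $\f_1=x_1x_2+x_3x_4$ with respect to a suitable basis $\{x_1,\dots,x_6\}$ of $W$, choose a second generator $\f_2$ of $\ell$, and split according to $\dim(U_1\cap U_2)$. The subcase $U_1=U_2$ is excluded since $\ell\subset\PP(\L^2 U_1)=\PP^5$ would force $\ell$ to meet the Klein quadric $\GG(1,3)$ by Bézout, contradicting $\ell\cap\GG=\emptyset$; the subcases $\dim(U_1\cap U_2)\in\{0,1\}$ are excluded by dimension; the surviving subcases $\dim(U_1\cap U_2)\in\{2,3\}$ are reduced to normal form by the same type of basis manipulations used in Proposition \ref{prop:case62-rel-positions-l-subset-C}, with the distinguishing algebraic constraint now being the vanishing pattern of the coefficients of $(a\f_1+b\f_2)^3$ viewed as a polynomial in $(a,b)$. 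In cases $(4)$--$(5)$, I start instead from $\f_1\in\ell\cap\GG$, write $\f_1=x_1x_2$, and use the prescribed contact multiplicity at $[\f_1]$ to simplify $\f_2$.

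The main obstacle will be case $(3)$, the honest tangency of $\ell$ to $\cC$ at a smooth rank-$4$ point with contact multiplicity $3$. The condition $(a\f_1+b\f_2)^3 \propto b^3\,x_1\cdots x_6$ reduces to $\f_1^2\f_2=0$ and $\f_1\f_2^2=0$, two relations that couple the coefficients of $\f_2$ across both summands of $\f_1=x_1x_2+x_3x_4$ and force a ``cross'' monomial such as $x_2 x_3$ to appear in the normal form $\f_2=x_1x_5+x_2x_3+x_4x_6$. Finding a sequence of changes of basis that preserves the canonical form of $\f_1$ while bringing $\f_2$ to this shape is the most delicate bookkeeping step; case $(5)$ will require analogous care at a rank-$2$ tangency point inside $\GG$.
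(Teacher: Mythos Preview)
Your outline is correct and follows essentially the same route as the paper: enumerate the five intersection patterns via B\'ezout together with $\GG=\mathrm{Sing}(\cC)$, verify each model by computing the binary cubic $(a\f_1+b\f_2)^3$, then in each case put $\f_1$ into canonical form and normalize $\f_2$ by successive basis changes. One point to correct: your proposed split on $\dim(U_1\cap U_2)$ is not the operative invariant. When $\ell\not\subset\cC$ and both generators are taken as rank-$4$ points (which is how the paper handles cases $(1)$ and $(2)$), one has $U_1+U_2=W$---this was noted just before the statement---so $\dim(U_1\cap U_2)=2$ is forced; the value $3$ does not survive, since it would give $\dim(U_1+U_2)=5$ and hence $\ell\subset\PP(\L^2(U_1+U_2))\subset\cC$. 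The finer split the paper actually uses is on how $U_2$ meets the two $2$-planes $\la x_1,x_2\ra$ and $\la x_3,x_4\ra$ whose direct sum is $U_1$. In case $(3)$ there is only one rank-$4$ point on $\ell$, so $\f_2$ must be taken of rank $6$ and $U_2=W$; your identification of the constraints $\f_1^2\f_2=0$ and $\f_1\f_2^2=0$ is exactly what the paper exploits, and you are right that the bookkeeping there (and in case $(5)$) is the longest part of the argument.
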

\begin{proof}
Notice that the condition $\ell \not \subset \cC$ implies necessarily that for any generators $\f_1,\f_2$ of $\ell$ we have $U_1+U_2=W$; indeed, if it were $\dim(U_1+U_2)\leq 5$, we could find a basis of $W$ so that $\ell=\la \f_1, \f_2 \ra \subset \PP(\L^2 \la x_1, \dots , x_5 \ra) \subset \cC$.
Now we analyze the different cases; the results are contained in Table \ref{table:6-2C}.

\noindent \textbf{Case 1.} $\ell \cap \cC=\{p_1,p_2,p_3\}$, $\ell \cap \GG=\emptyset$. This is the generic case, meaning that a generic line satisfies this condition. If $\ell$ is generated by $\f_1=x_1x_2+x_3x_4$ and $\f_2=x_1x_2+x_5x_6$, then $\ell \cap \cC$ consists of the points $p_1=[\f_1]$, $p_2=[\f_2]$, $p_3=[\f_1-\f_2]$, and $\ell \cap \GG=\emptyset$; indeed, $t \f_1 +s \f_2=(t+s)x_1x_2+tx_3x_4+sx_5x_6$ has rank $6$ if $[t:s] \notin \{[1:-1],[1:0],[0:1]\}$, and rank $4$ otherwise.

\noindent On the opposite direction, let $\ell$ be a line with this relative position with $\cC$ and $\GG$, and let us see that there is a basis of $W$ so that $\ell$ has this model. As usual we start with coordinates so that $\f_1=x_1x_2+x_3x_4$. 

\noindent Now we need to discard the case in which $\dim(U_2 \cap \la x_1,x_2 \ra)=\dim(U_2\cap\la x_3,x_4\ra)=1$. If this were the case, we could assume that $U_2=\la x_1,x_3,x_5,x_6 \ra$ so $\f_2=x_1(ax_3+bx_5+cx_6)+x_3(ex_5+fx_6)+gx_5x_6$. If $b$ or $c$ are non-zero we can assume (switching $x_5$ and $x_6$ if needed, and rescaling) that $b=1$; if we put $x'_5=ax_3+x_5+cx_6$ we get $\f_2=x_1x_5+x_3(ex_5+fx_6)+gx_5x_6$, and now $f \ne 0$. Rescale so that $f=1$, and change $x'_6=ex_5+x_6$, so we get $\f_2=x_1x_5+x_3x_6+gx_5x_6$. If $g=0$ then $\ell \subset \cC$, a contradiction; and if $g \ne 0$ then it is easy to see that $\a \f_1 + \b \f_2$ has rank $6$ for any $\a \ne 0 \ne \b$, so $\ell \cap \cC$ is two points, a contradiction. If $b=c=0$ above, then $ag\neq 0$; we can achieve $g=1$, hence $\f_2=ax_1x_3+(x_5+fx_3)(x_6-ex_3)$; setting $x'_5=x_5+fx_3$ and $x'_6=x_6-ex_3$, rescaling and renaming we obtain $\f_2=x_1x_3+x_5x_6$, but this would give only two points in $\ell\cap\cC$.

\noindent Hence we can assume, permuting the pairs $(x_1,x_2)$ and $(x_3,x_4)$ if necessary, that $U_2 \cap \la x_1,x_2 \ra=\{0\}$, so we can take $U_2=\la x_5,x_6, x_3+ax_1, x_4+bx_2 \ra$. Make the change $x'_2=x_2-ax_4$ and $x'_3=x_3+ax_1$, so $\f_1=x_1x'_2+x'_3x_4$, and $U_2=\la x_5,x_6,x'_3, (1+ab)x_4+bx'_2 \ra$. Note that the case $ab+1=0$ contradicts the assumption $U_2 \cap \la x_1,x_2 \ra=\{0\}$, so we can divide by $1+ab$, and repeat this process with the change $x_1'=x_1-bx_3$, $x'_4=x_4+bx'_2$. With this, we get a model with $\f_1=x_1x_2+x_3x_4$ and $U_2=\la  x_3, x_4, x_5, x_6 \ra$. We can write then:
\[
\f_2=x_3(ax_4+bx_5+cx_6)+x_4(ex_5+fx_6)+gx_5x_6 \, .
\]
If $a=0$ then $bf-ce\neq 0$, so we find a basis of $U_2$ such that $\f_2=x_3x_5+x_4x_6+gx_5x_6$. An easy computation shows that $\a \f_1+ \b \f_2$ has rank $6$ unless $\a\b(\a g-\b)=0$. Since there must be three distinct points of rank $4$, $g \ne 0$, and we can assume $g=1$, so $\f_2=x_3x_5+(x_4+x_5)x_6$. We define $x'_5=-(x_4+x_5)$, $x'_6=x_3-x_6$ and we get $\f_2=-x_3x_4-x'_5x'_6$. After changing the sign of $\f_2$, we obtain the model
\[
\left\{\begin{array}{ccl}
 \f_1 & = & x_1x_2+x_3x_4\\
 \f_2 & = & x_3x_4+x_5x_6
\end{array}\right.
\]
On the other hand, if $a \ne 0$ we can rescale it to $a=1$. If, moreover, $g=0$ then we get easily to the expression $\f_2=x_3x_4+x_4x_5+x_3x_6$ and we get to a contradiction since $\a \f_1 + \b \f_2$ has rank $6$ whenever $\a, \b \ne 0$. Then it must be $g\ne 0$, so take $g=1$ and then $\f_2=x_3x_4+(x_5-cx_3-ex_4)x_6$ and with the change $x'_5=x_5-cx_3-ex_4$ we arrive at our model for $\ell$.

\medskip

\noindent \textbf{Case 2.} $\ell \cap \cC=\{p_1,p_2\}$ with multiplicities $2,1$, $\ell \cap \GG=\emptyset$. 
Hence $p_1,p_2$ are smooth points of $\cC$, $\ell$ intersects $\cC$ at $p_1$ with multiplicity $2$ and transversely at $p_2$. For instance, consider $\ell$ generated by $\f_1=x_1x_2+x_3x_4$ and $\f_2=x_3x_5+x_4x_6$. The line $\a \f_1 +\b  \f_2$, is given in coordinates by $a_{12}=a_{34}=\a$, $a_{35}=a_{46}=\b$; plugging this into \eqref{eq:cubic} we see that $\ell \cap \cC$ is given by $\a \b^2=0$, so $\f_1$ is indeed the double point of intersection. Now let us see that the above is the only model of a line $\ell$ intersecting $\cC$ in this way.

\item Take $\f_1$ as the double point of $\ell\cap\cC$ and choose initial coordinates so that $\f_1=x_1x_2+x_3x_4$. We claim that $\f_2$ satisfies that either $U_2\cap \la x_1, x_2 \ra=\{0\}$ or $U_2\cap \la x_3, x_4 \ra=\{0\}$. Indeed, if that was not the case, then we can assume that $x_1, x_3 \in U_2$, so $U_2=\la x_1,x_3,x_5,x_6 \ra$ and
\[
\f_2=x_1(ax_3+bx_5+cx_6)+x_3(ex_5+fx_6)+gx_5x_6 \, .
\]
If $e=f=0$ then $ag\neq 0$ and we can rescale to get $a=g=1$. We obtain $\f_2=x_1x_3+(x_5+cx_1)(x_6-bx_1)$ so after the obvious change we get $\f_2=x_1x_3+x_5x_6$. But then the intersection of the line $\ell=\la\a \f_1+\b \f_2\ra$ with $\cC$ is given by $\a^2\b=0$, so $\f_2$ is the double point of the intersection. This is a contradiction with our choice of $\f_1$ as the double point. We conclude that one of $e$ or $f$ is non-zero, so we can assume $e=1$ and put $x'_5=x_5+fx_6$, so that, upon renaming,
\[
\f_2=x_1(ax_3+bx_5+cx_6)+(x_3-gx_6)x_5
\]
We must have $g\ne 0$; indeed, if $g=0$ then we can assume $c=1$ and put $x'_6=ax_3+bx_5+x_6$, so $\f_2=x_1x_6+x_3x_5$ and $\ell \subset \cC$, a contradiction. So we can assume $g=1$ by rescaling $x_6$, and change $x'_6=x_6-x_3$ so $\f_2=x_1(ax_3+bx_5+cx_6)+x_5x_6$ with $a \ne 0$. By rescaling $x_3$, $x_2$, and $\f_1$ we get $a=1$ so $\f_2=x_1x_3+(x_5+cx_1)(x_6-bx_1)$ and with the change $x'_5=x_5+cx_1$ and $x'_6=x_6-bx_1$ we arrive at $\f_2=x_1x_3+x_5x_6$. As above, this displays a contradiction with $\f_1$ being the double point in $\ell\cap\cC$. This proves the claim about $U_2$. 

\item We can therefore assume that $U_2$ intersects trivially $\la x_1, x_2 \ra$ or $\la x_3, x_4 \ra$. By permuting the pairs $(x_1,x_2)$ and $(x_3,x_4)$ we can assume that $U_2 \cap \la x_1, x_2 \ra=\{0\}$. This yields $U_2=\la x_3+ax_1, x_4+bx_2, x_5,x_6 \ra$. Make the changes $x'_2=x_2-ax_4$, $x'_3=x_3+ax_1$, so that $\f_1=x_1x_2'+x'_3x_4$, and reset notation. Repeating the process with $x'_1=x_1+bx_3$, $x'_4=x_4+bx_2$ yields a basis with $\f_1=x_1x_2+x_3x_4$ and $U_2=\la x_3,x_4,x_5,x_6 \ra$. Write
\[
\f_2=x_3(ax_4+bx_5+cx_6)+x_4(ex_5+fx_6)+gx_5x_6 \, .
\]
If $b=c=0$, then $ag \ne 0$ so we can get $a=g=1$ and $\f_2=x_3x_4+x_4(ex_5+fx_6)+x_5x_6$. Now it is easy to see that after a suitable change we get $\f_2=x_3x_4+x_5x_6$: if $e=f=0$ this is clear, if $e\ne 0$ we can rescale to get $e=1$ and make the changes $x'_5=x_5+fx_6$, $x'_6=x_6-x_4$, so that $\f_2=x_3x_4+x'_5x'_6$. Since $\f_1=x_1x_2+x_3x_4$, we see that $\f_1-\f_2$ has rank $4$ so $\ell\cap\cC$ consists of three distinct points, a contradiction.
\item Hence, we can assume that one of $b$ or $c$ is non-zero, so after permuting $x_5,x_6$ if necessary we can rescale to have $b=1$ and change $x'_5=ax_4+x_5+cx_6$, so $\f_2=x_3x_5+x_4(ex_5+fx_6)+gx_5x_6$ with $f \ne 0$. We rescale so that $f=1$ and put $x'_6=ex_5+x_6$ so $\f_2=x_3x_5+(x_4+gx_5)x_6$. Now, if it was $g\ne 0$ then we could rescale $x_5$, $x_3$, $x_1$ and $\f_1$ in order to get $g=1$. If $\ell=\la\a \f_1+ \b \f_2\ra$, then $\ell \cap \cC=\{\f_1,\f_2,\f_1+\f_2\}$, which gives a contradiction. We are finally done: $g=0$ follows, and we get the model
\[
\left\{\begin{array}{ccl}
 \f_1 & = & x_1x_2+x_3x_4\\
 \f_2 & = & x_3x_5+x_4x_6
\end{array}\right.
\]

\medskip

\noindent \textbf{Case 3.} $\ell \cap \cC=\{p\}$ with multiplicity $3$ and $\ell \cap \GG=\emptyset$. This means that $p$ is a smooth point of $\cC$ and $\ell$ is tangent to $\cC$ at $p$ with multiplicity $3$.
Consider the bivectors $\f_1=x_1x_2+x_3x_4$ and $\f_2=x_1x_5+x_2x_3+x_4x_6$ and the projective line $\ell=\la\a \f_1+\b \f_2\ra\subset\PP^{14}$. Its parametric equations are $a_{12}=a_{34}=\a$, $a_{15}=a_{23}=a_{46}=\b$. Plugging them into \eqref{eq:cubic} we obtain $\b^3=0$, so $\f_1$ is indeed a triple point. Note that for any $\a$ we have $(\a \f_1 + \f_2)^3 \ne 0$.

\item Let us see that this is the only model satisfying this. Choose an initial basis so that $\f_1=x_1x_2+x_3x_4$ and $U_1=\la x_1,x_2,x_3,x_4\ra$; write the rank $6$ generator as $\f_2=\sum_{i<j} a_{ij} x_ix_j$. We claim first that some of the coefficients $a_{15}$, $a_{25}$, $a_{35}$, $a_{45}$ must be non-zero. Indeed, if they all vanished, then $a_{56} \ne 0$ and we could write 
\[
\f_2=(\textstyle{\sum_{i=1}^5 a_{i6} x_i})x_6 + \xi_2
\]
where $\xi_2 \in \L^2 U_1$ has rank $4$. The line generated by $\f_1, \xi_2$ in $\PP(\L^2 U_1)=\PP^5$ must contain some rank-$2$ bivector, since these form the Klein quadric, so there are non-zero scalars $\a_0, \b_0$ so that $\a_0 \f_1+ \b_0 \xi_2$ has rank $2$. But then $\a_0 \f_1+ \b_0 \f_2$ would have rank $4$, a contradiction. An analogous argument permuting $x_5, x_6$ shows that one of $a_{16}$, $a_{26}$, $a_{36}$, $a_{46}$ must be non-zero.

\item We can assume that $a_{15}$ or $a_{25}$ are non-zero permuting the pairs $(x_1, x_2)$ and $(x_3, x_4)$, and moreover that $a_{15} \ne 0$ changing $x_1$ by $x_2$ and $x_2$ by $-x_1$. We rescale so that $a_{15}=1$ and write
\[
\f_2=x_1( x_5 + \textstyle{\sum_{i\ne 5} a_{1i} x_i}) + \textstyle{\sum_{1<i<j\leq 6} a_{ij}x_ix_j}
\]
and make the change $x'_5=x_5 + \sum_{i\neq 5} a_{1i} x_i$ and reset notation so that 
\begin{equation}\label{eq:6.2.3}
\f_2=x_1x_5 + \textstyle{\sum_{1<i<j\leq 6} a_{ij}x_ix_j}\,.    
\end{equation}
As we noted above, some of $a_{26}$, $a_{36}$, $a_{46}$ must be non-zero, but more is true in this setting: it must be $a_{36} \ne 0$ or $a_{46} \ne 0$. Indeed, if $a_{36}=a_{46}=0$ then $a_{26} \ne 0$, so we can rescale $\f_2$ to achieve $a_{26}=1$. Make a change $x'_6=x_6+\sum_i a_{2i} x_i$, and reset notation so that the only monomial containing $x_2$ is $x_2x_6$, and note that $a_{36}=a_{46}=0$ in the new basis, so we have
\[
\f_2=x_1x_5 + x_2 x_6 + x_3(x_4+a_{35}x_5) + a_{45}x_4x_5 + a_{56}x_5x_6
\]
where we have put $a_{34}=1$ because $a_{34} \ne 0$ (otherwise $\f_2^3=0$) and we can rescale so that it equals $1$. 
%\textcolor{blue}{Para obtener $a_{34}=1$ se puede hacer por ejemplo $x'_1=a_{34}x_1$, $x'_4=a_{34}x_4$, $x'_5=\frac{1}{a_{34}}x_5$, y rescalar también $\f_1$, correcto?} \textcolor{green}{Creo que no es necesario reescalar $x_1$ ni $x_5$, solamente $x'_4=a_{34} x_4$; el coeficiente de $x_4x_5$ cambia pero te da igual porque era $a_{45}$, un coeficiente genérico.} 
But now we get to a contradiction since we can cancel the term $x_3x_4$ by considering
\[
\f_2-\f_1=(x_1+a_{35}x_3+a_{45}x_4-a_{56}x_6)x_5 + x_2(x_6+x_1)
\]
and this has rank $4$. We conclude that either $a_{36}$ or $a_{46}$ are non-zero in \eqref{eq:6.2.3}, so permuting $x_3, x_4$ we can assume that $a_{46} \ne 0$, rescale so that $a_{46}=1$ and set $x'_6=x_6+ \sum_i a_{4i} x_i$ so that the only monomial containing $x_4$ is $x_4x_6$. Moreover, we must have $a_{23}\neq 0$ for $\f_2$ to have rank 6. Rescaling adequately, we can assume $a_{23}=1$, so that
\begin{align*}
\f_2&=x_1x_5 + x_4 x_6 + x_2x_3 + x_2(a_{25}x_5+a_{26}x_6) + x_3(a_{35}x_5+a_{36}x_6) + a_{56}x_5x_6\\
&=(x_1+a_{25}x_2)x_5 + (x_4+a_{36}x_3) x_6 + x_2x_3 + a_{26}x_2x_6 + a_{35} x_3x_5 + a_{56}x_5x_6\,;
\end{align*}
the changes $x'_1=x_1+a_{25}x_2$, $x'_4=x_4+a_{36}x_3$ do not affect $\f_1$ and, resetting coordinates, we have $\f_2=x_1x_5 + x_2x_3 + x_4x_6 + a_{26}x_2x_6 + a_{35} x_3x_5 + a_{56}x_5x_6$. We impose the condition $\ell\cap\cC=\{\f_1\}$, which translates into $\f_2-\a\f_1$ having rank $6$ for every $\a \in \kk$. We compute
\begin{align*}
(\f_2-\a\f_1)^3 &= (x_1x_5 -\a x_1x_2 + x_2x_3 -\a x_3x_4 + x_4x_6 + a_{26}x_2x_6 + a_{35} x_3x_5 + a_{56}x_5x_6)^3\\
&=(a_{56}\a^2+(a_{26}+a_{35})\a-1)x_1x_2x_3x_4x_5x_6\,;
\end{align*}
since $\kk$ is algebraically closed, we must have $a_{26}=-a_{35}\eqqcolon a$ and $a_{56}=0$. This leaves us with and $\f_2=(x_1-ax_3)x_5+x_2x_3+(x_4+ax_2)x_6$. We change $x'_1=x_1-ax_3$, $x'_4=x_4+ax_2$, so that $\f_2=x'_1x_5+x_2x_3+x'_4x_6$, and $\f_1=(x'_1+ax_3)x_2+x_3(x'_4-ax_2)=x'_1x_2+x_3x'_4$. Resetting notation, we have arrived at the model
\[
\left\{\begin{array}{ccl}
 \f_1 & = & x_1x_2+x_3x_4\\
 \f_2 & = & x_1x_5+x_2x_3+x_4x_6
\end{array}\right.
\]

\medskip

\noindent \textbf{Case 4.} $\ell \cap \cC=\{p_1,p_2\}$ with multiplicities $2$ and $1$ respectively, and $\ell \cap \GG=\{p_1\}$. Since $p_1$ is a singular point for $\cC$, any line through $p_1$ has multiplicity of intersection $\ge 2$ with $\cC$, so $\ell$ is not contained in the tangent cone of $\cC$ at $p_1$. Consider the bivectors $\f_1=x_1x_2$, $\f_2=x_3x_4 + x_5x_6$ and the line $\ell=\la\a \f_1 + \b \f_2\ra$; points of $\ell$ with $\a, \b \ne 0$ have rank 6, hence $\ell\cap\cC=\{p_1,p_2\}$, with $p_i=[\f_i]$. As $\f_1$ is a singular point of $\cC$, $I_{p_1}(\ell,\cC)\geq 2$, so it must be $I_{p_1}(\ell,\cC)=2$ and $I_{p_2}(\ell,\cC)=1$. Of course this is also checked in coordinates: $\ell \cap \cC$ is given by solutions of $(\a \f_1+\b \f_2)^3=0$, i.e. $\a \b^2=0$.

\noindent To see that there is just one model with this relative position, take initial coordinates so that $\f_1=x_1x_2$, and note that $U_1 \cap U_2=\{0\}$, as their the sum must be the total space. Then we can write $U_2=\la x_3, x_4, x_5, x_6 \ra$, and the model is
\[
\left\{\begin{array}{ccl}
 \f_1 & = & x_1x_2\\
 \f_2 & = & x_3x_4+x_5x_6
\end{array}\right.
\]

\medskip

\noindent \textbf{Case 5.} $\ell \cap \cC=\{p\}$ with multiplicity $3$ and $\ell \cap \GG=\{p\}$. Recall that $p$ is a singular point of $\cC$ of multiplicity $2$ (as there are lines through $p$ intersecting $\cC$ with multiplicity $2$), so it follows that $\ell$ is contained in the tangent cone of $\cC$ at $p$.
Consider the bivectors $\f_1=x_1x_2$, $\f_2=x_1x_3+x_2x_4+x_5x_6$ and the line $\ell=\la\a \f_1 + \b \f_2\ra$; every point of $\ell$ with $\b\neq 0$ has rank $6$, so the point $p=[\f_1]$ is a triple point of intersection.

\noindent We aim at showing the uniqueness of this model. To see this, take initial coordinates with $\f_1=x_1x_2$. In the expression of $\f_2=\sum a_{ij} x_ix_j$ we can assume that $a_{12}=0$ by taking $\f'_2=\f_2-a_{12} \f_1$. Some coefficient $a_{1i}$, $i=3,4,5,6$ must be non-zero. We can make a permutation of $x_3,x_4,x_5, x_6$ so that $a_{13} \ne 0$, rescale so that $a_{13}=1$ and make a change $x'_3=x_3+ \sum a_{1i} x_i$ so that the only term containing $x_1$ is $x_1x_3$, and $\f_2=x_1x_3 + \xi_2$ with $\xi_2$ not containing $x_1$. The change $x'_1=x_1+a_{23}x_2$ eliminates the term $a_{23}x_2x_3$, hence we can suppose that at least one of $a_{24}$, $a_{25}$, $a_{26}$ is non-zero. After maybe permuting $x_4$ with either $x_5$ or $x_6$, and rescaling $x_4$, we can assume that $a_{24} =1$, make a change $x'_4=x_4+a_{25}x_5+a_{26}x_6$, and
\[
\f_2=x_1x_3+x_2x_4 + x_3(a_{34}x_4+a_{35}x_5+a_{36}x_6)+x_4(a_{45}x_5+a_{46}x_6)+a_{56}x_5x_6
\]
We claim that one of $a_{35}$, $a_{36}$, $a_{45}$, $a_{46}$ must be non-zero. Indeed, if all of them were zero then we could rescale to achieve $a_{34}=1=a_{56}$, and then we would have $\f_1+\f_2=(x_1-x_4)(x_2+x_3)+x_5x_6$ of rank $4$, which is a contradiction. Moreover we can assume that $a_{35} \ne 0$, maybe after permuting the pairs $(x_1,x_3)$ and $(x_2,x_4)$, and also permuting $x_5, x_6$ if necessary. Rescale so that $a_{35}=1$, do the change $x'_5=x_5+a_{34}x_4+a_{36}x_6$ and reset notation to get
\[
\f_2=x_1x_3+x_2x_4 + x_3x_5+x_4(a_{45}x_5+a_{46}x_6)+a_{56}x_5x_6\,.
\]
We compute $(\f_2+\a \f_1)^3=-6(a_{46} \a +a_{56})x_1x_2x_3x_4x_5x_6$; since this must be non-zero for every $\alpha\in\kk$, it must be $a_{46}=0$ and $a_{56} \ne 0$; we can rescale to have $a_{56}=1$ and get
\[
\f_2=x_1x_3+x_2x_4+x_5(x_6-x_3-a_{45}x_4)=x_1x_3+x_2x_4+x_5x'_6\,.
\]
With a last change $x'_6=x_6-x_3-a_{45}x_4$ this gives the model
\[
\left\{\begin{array}{ccl}
 \f_1 & = & x_1x_2\\
 \f_2 & = & x_1x_3+x_2x_4+x_5x_6
\end{array}\right.
\]
\end{proof}

\begin{remark}
An alternative method to prove the uniqueness of the model for each relative position is based on the classification of pencils of skew-symmetric matrices, which can be found in \cite{Gant}. A pencil of skew-symmetric matrices can be thought of as a line $\ell$ of bivectors \emph{with two marked points}, the generators of the pencil, so the classification of pencils consists of finding standard models for pairs of bivectors (or skew-symmetric matrices). This is more rigid that the classification of lines $\ell$ we do here, as in the latter case we are allowed to vary the generators of the line.
\end{remark}

\noindent In Table \ref{table:6-2C} we collect the results of Proposition \ref{prop:case62-rel-positions-l-not-contained-C}, which tackled the case $\ell\not\subset\cC$.
\begin{itemize}
    \item The second column contains the relative position of $\ell$ with respect to $\GG\subset\cC$;
    \item the third column contains the relative position of $\ell$ with respect to $\cC$;
    \item the fourth and fifth columns contain the differentials of the non-closed elements;
    \item the sixth column says whether the minimal algebra is {\em irreducible}, i.e.~it is not the sum of lower-dimensional minimal algebras;
    \item in case it is irreducible, the seventh column identifies our algebra with the Lie algebra in the list obtained in \cite{Ren2011}.
\end{itemize}

\begin{table}[h]
\caption{Minimal algebras of type $(6,2)$ with $\ell\not\subset\cC$}\label{table:6-2C}
{\tabulinesep=1mm
\begin{tabu}{|c|c|c|c|c|c|c|c|c|c|c|c|c|c|}
\hline
Label & $\ell\cap\GG$ & $\ell\cap\cC$ & $dx_7$ & $dx_8$ & Irreducible & \cite{Ren2011}\\
\hline
(6.2.7) & $\emptyset$ & $\{p_1,p_2,p_3\}$ & $x_1x_2+x_3x_4$ & $x_3x_4+x_5x_6$ & $\checkmark$ & $N_1^{8,2}$\\
\hline
(6.2.8)& $\emptyset$ & $\{2p_1,p_2\}$ & $x_1x_2+x_3x_4$ & $x_1x_5+x_3x_6$ & $\checkmark$ & $N_3^{8,2}$\\
\hline
(6.2.9) & $\emptyset$ & $\{3p\}$ & $x_1x_2+x_3x_4$ & $x_1x_5+x_2x_3+x_4x_6$ & $\checkmark$ & $N_5^{8,2}$\\
\hline
(6.2.10) & $1$ & $\{2p_1,p_2\}$ & $x_1x_2$ & $x_3x_4+x_5x_6$ & $\times$ & \\
\hline
(6.2.11) & $1$ & $\{3p\}$ & $x_1x_2$ & $x_1x_3+x_2x_4+x_5x_6$ & $\checkmark$ & $N_4^{8,2}$\\
\hline
\end{tabu}}
\end{table}

\section{Case (5,3)}\label{sec:(5,3)}

We have $d\colon F_1 \to \L^2 W_0$ injective, with $\dim F_1=3$, $\dim W_0=5$, and $\pi=\PP(\tr{Im}(d))$ is a projective $2$-plane in $\PP^9=\PP(\L^2 W_0)$. Denote again $W_0=W$. As every bivector in $\L^2 W$ has rank at most $4$, the stratification by rank has only one stratum, namely the rank-$2$ bivectors given by the Plücker embedding of the Grassmannian $\tr{Gr}(2,5)$ of planes in $W \cong \kk^5$, or, equivalently, of the Grassmannian $\GG(1,4)$ of projective lines in $\PP(W) \cong \PP^4$. We set $\GG \coloneqq \GG(1,4)$ in this section. We need to study the relative position of $\pi$ with respect to $\GG$ in $\PP^9$. In order to do so, recall that the image of the Plücker embedding of $\GG$ in $\PP^9$ is a variety of dimension $6$ and degree $5$ (see \cite{Harris}). A bivector $\f$ is in $\GG$ if and only if $\f^2=0$; in coordinates, 
$\f=\sum_{i<j} a_{ij} x_i x_j$ and
\begin{align*} 
    \f^2= & \,  (a_{12}a_{34}-a_{13}a_{24}+a_{14}a_{23})x_1x_2x_3x_4+(a_{12}a_{35}-a_{13}a_{25}+a_{15}a_{23})x_1x_2x_3x_5  \\  
    + & (a_{12}a_{45}-a_{14}a_{25}+a_{15}a_{24})x_1x_2x_4x_5 +(a_{13}a_{45}-a_{14}a_{35}+a_{15}a_{34})x_1x_3x_4x_5 \\ 
    + & (a_{23}a_{45}-a_{24}a_{35}+a_{25}a_{34})x_2x_3x_4x_5\,,
\end{align*}
hence $\GG$ is given by the $5$ equations obtained by equating the above coefficients to zero:
\begin{equation}\label{eq:G(2,5)}
\GG=\begin{cases}
a_{12}a_{34}-a_{13}a_{24}+a_{14}a_{23}=0 \\
a_{12}a_{35}-a_{13}a_{25}+a_{15}a_{23}=0 \\
a_{12}a_{45}-a_{14}a_{25}+a_{15}a_{24}=0 \\
a_{13}a_{45}-a_{14}a_{35}+a_{15}a_{34}=0 \\
a_{23}a_{45}-a_{24}a_{35}+a_{25}a_{34}=0
\end{cases}    
\end{equation}

%\subsection*{Analysis of cases:}
\noindent We want to obtain a representative of the orbit of the plane $\pi \subset \L^2 W$ in terms of generators.
We call $\f_1, \f_2, \f_3$, some generators of $\pi$, and $x_1, x_2,x_3,x_4,x_5$ a basis for $W$. As before, the idea is to choose $\f_i$ and $x_i$ so that the expression is as simple as possible.

\begin{proposition} \label{prop:case53-rel-positions-pi-contained-G}
Notations as above. Suppose that $\pi\subset\GG$. Then one and only one of the following occurs:
\begin{enumerate}
\item $\pi=\PP(\Lambda^2 Z)$ for some $Z\subset W$ with $\dim Z=3$. We can take generators for $\pi$ of the form $\f_1=x_1x_2$, $\f_2=x_1x_3$, and $\f_3=x_2x_3$.

\medskip

\item $\pi\subset\PP(\Lambda^2 Y)\cong\PP^5$ for some $Y\subset W$ with $\dim Y=4$. We can take generators for $\pi$ of the form $\f_1=x_1x_2$, $\f_2=x_1x_3$, and $\f_3=x_1x_4$.
\end{enumerate}
Each of the above relative positions determines a standard model for $\pi$ in suitable coordinates, hence the orbit of $\pi$ under the action of $\GL(W)$. 
\end{proposition}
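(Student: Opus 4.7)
The plan is to exploit the rank condition $\pi \subset \GG$ to constrain the relative positions of the $2$-planes $U_\f \subset W$ associated to the rank-$2$ bivectors $\f \in \pi$. I would start by taking three linearly independent generators $\f_1, \f_2, \f_3$ of $\pi$ with associated $2$-planes $U_1, U_2, U_3$. For every pair $i \ne j$ and every $t \in \kk$ the bivector $\f_i + t \f_j$ lies in $\pi \subset \GG$ and hence has rank $\le 2$, so $(\f_i + t \f_j)^2 = 0$ identically in $t$. Using $\f_i^2 = \f_j^2 = 0$ this forces $\f_i \wedge \f_j = 0$ in $\L^4 W$, which for rank-$2$ bivectors is equivalent to $U_i \cap U_j \ne \{0\}$. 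Since the $\f_i$ are linearly independent, no two $U_i$ can coincide, and so $L_{ij} \coloneqq U_i \cap U_j$ is a line for every $i \ne j$.

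The argument then branches on the configuration of the three lines $L_{12}, L_{13}, L_{23}$. I would first observe that if $L_{12} = L_{13}$, then this common line sits in $U_2 \cap U_3 = L_{23}$ as well, so the three lines are either all equal or pairwise distinct. In the ``all equal'' case, there is a common vector $v$ with $\f_i = v \wedge w_i$ for suitable $w_i$, and linear independence of the $\f_i$ forces $v, w_1, w_2, w_3$ to span a $4$-dimensional subspace $Y$; setting $x_1 = v$, $x_2 = w_1$, $x_3 = w_2$, $x_4 = w_3$ and extending to a basis of $W$ produces the model of case $(2)$.

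In the ``pairwise distinct'' case, the three lines cannot lie in a common $2$-plane, since otherwise each $U_i$ (spanned by the two $L_{ij}$ it contains) would equal this plane, contradicting distinctness. Hence $Z \coloneqq L_{12} + L_{13} + L_{23}$ is $3$-dimensional and every $U_i$ sits in $Z$, so $\f_1, \f_2, \f_3 \in \L^2 Z$. As $\dim \L^2 Z = 3 = \dim \pi$ we conclude $\pi = \PP(\L^2 Z)$, and any basis of $Z$ extended to $W$ gives the model of case $(1)$. The two cases are mutually exclusive: in case $(1)$ no common line is shared by all $U_\f$, while in case $(2)$ every $U_\f$ contains $\la x_1 \ra$.

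The only delicate point I anticipate is verifying the dichotomy of configurations of the $L_{ij}$ and ruling out the coplanar configuration; the wedge identities $\f_i \wedge \f_j = 0$ carry almost all of the geometric content, and what remains is a matter of arranging the basis to display the advertised standard form.
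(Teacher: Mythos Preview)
Your proposal is correct and follows essentially the same approach as the paper. Both proofs start from the key observation that $\dim(U_i \cap U_j) = 1$ for $i \ne j$ (you derive this via $\f_i \wedge \f_j = 0$, the paper via ``otherwise linear combinations would have rank $4$''), and then split into two cases according to whether the triple intersection $U_1 \cap U_2 \cap U_3$ is a line or zero. The paper encodes this dichotomy compactly via the formula $\dim(U_1+U_2+U_3) = 3 + \dim(U_1 \cap U_2 \cap U_3)$, whereas you phrase it as the lines $L_{ij}$ being either all equal or pairwise distinct; your explicit exclusion of the coplanar configuration is in fact a justification of that formula, so your argument is slightly more detailed at exactly the point the paper leaves implicit.
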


\begin{proof}
Choose rank-2 generators $\f_i$, $i=1,2,3$, and note that the planes $U_i\subset\kk^5$ have to satisfy $\dim(U_i \cap U_j)=1$ for $i \ne j$. Indeed, if this were not the case, linear combinations of $\f_i$ and $\f_j$ would have rank $4$. Then
\[
\dim(U_1+U_2+U_3)=3+\dim(U_1\cap U_2\cap U_3)\,.
\]

\item If $\dim(U_1+U_2+U_3)=3$, we can choose coordinates in $Z\coloneqq(U_1+U_2+U_3)$ which give the model
\[
\left\{\begin{array}{ccl}
 \f_1 & = & x_1x_2\\
 \f_2 & = & x_1x_3\\
 \f_3 & = & x_2x_3
\end{array}\right.
\]

\item If $\dim(U_1+U_2+U_3)=4$, we choose $x_1$ spanning $U_1\cap U_2\cap U_3$ and complete it to a basis $\{x_1,x_2,x_3,x_4\}$ of $Y\coloneqq(U_1+U_2+U_3)$ which gives the model
\[
\left\{\begin{array}{ccl}
 \f_1 & = & x_1x_2\\
 \f_2 & = & x_1x_3\\
 \f_3 & = & x_1x_4
\end{array}\right.
\]
\end{proof}

\noindent We assume now that $\pi$ is not contained in $\GG$. We start with the case in which $\pi \subset \PP(\L^2Y)$, for some $Y \subset W$ with $\dim Y=4$. These $\PP(\L^2 Y) \cong \PP^5$ are special 5-dimensional subspaces of $\PP(\L^2 W) \cong \PP^9$ for the action of $\GL (W)$ in $\PP^9$. In particular, the orbit $\GL (W) \cdot \PP(\L^2 Y)$ is a subvariety of the Grassmannian of $5$-dimensional subspaces of $\PP^9$. 

\noindent In practical terms, the condition $\pi \subset \PP(\L^2 Y)\cong\PP^5$ says that we only need four vectors to describe the bivectors $\f_1, \f_2, \f_3$. Note that if $\pi \not \subset \GG$ then it necessarily intersects $\GG$ in a conic $\mathscr{C}$, because $\GG\cap\PP^5$ is the Klein quadric in $\PP^5$. The equation for the Klein quadric is obtained by putting $\f^2=0$ for $\f$ a bivector in $\L^2 Y$, with $Y=\la x_1,x_2,x_3,x_4 \ra$. The coordinate $x_5$ does not appear in any differential.
Hence, this case can be reduced to dimension seven, and is handled in \cite{B}. In other words, the minimal algebras arising in this way are a direct sum $\L  V_1  \oplus \L V_2$, where $V_1=\la x_5 \ra$ is a one-dimensional subspace, $V_2$ has dimension $7$ and the minimal algebra $\L V_2$ is one of the models from \cite{B} with $dx_6=\f_1$, $dx_7=\f_2$, $dx_8=\f_3$.
For completeness, let us briefly review those models.
\begin{itemize}
\item Assume that $\mathscr{C}$ is a smooth conic and choose $\f_1, \f_2 \in \mathscr{C}$. Since $\mathscr{C}$ is smooth, the line $\la \f_1, \f_2 \ra$ does not have further intersection points with $\GG$; it follows that $U_1\cap U_2=0$, and we can find coordinates so that $\f_1=x_1x_2$, $\f_2=x_3x_4$. We choose $\f_3$ as the intersection of the tangent lines $T_{\f_1}\mathscr{C} \cap T_{\f_2}\mathscr{C}$. Then $\f_3\notin\mathscr{C}$, so it has rank $4$, and has the form 
\[
\f_3=x_1(ax_2+bx_3+cx_4)+x_2(ex_3+fx_4)+gx_3x_4
\]
Now, $\f_3-a\f_1$ also has rank $4$ since, by our choice of $\f_3$, the only intersection point of the line $\la \f_1, \f_3 \ra$ with $\mathscr{C}$ is $\f_1$. Hence $(\f_3-a\f_1)^2\neq 0$, and it follows from this that $bf-ce \ne 0$, so we can make the change $x'_3=bx_3+cx_4$, $x'_4=ex_3+fx_4$. This change preserves $\f_1$ and $\f_2$ (up to scalars) and gives
\[
\f_3=ax_1x_2 + x_1x'_3+x_2x'_4+ gx'_3x'_4
\]
so we can take the new generator $\f'_3=\f_3-a\f_1-g\f_2$ to obtain the model
\[ 
\left\{\begin{array}{ccl}
 \f_1 & = & x_1x_2\\
 \f_2 & = & x_3x_4\\
 \f_3 & = & x_1x_3+x_2x_4
\end{array}\right.
\]
In this model, in coordinates $[\a: \b: \g]$ with respect to $\f_1$, $\f_2$ and $\f_3$, we have $\pi \cap \GG=\{\a \b - \g^2=0\}$. An equivalent model for this case is
\[ 
\left\{\begin{array}{ccl}
 \tilde{\f}_1 & = & x_1x_2\\
 \tilde{\f}_2 & = & x_3x_4\\
 \tilde{\f}_3 & = & (x_1+x_3)(x_2+x_4)
\end{array}\right.
\]
which stems from another choice of third generator. It can be obtained by the one above by considering $\f'_3=\f_1+\f_2+\f_3=(x_1-x_4)(x_2+x_3)$ and then permuting $x_3$ and $x_4$. 
%\textcolor{blue}{¿Hay una descripción geométrica de esta elección de puntos?} pues no lo sé la verdad...

\medskip

\item Assume $\mathscr{C}=\ell_1 \cup \ell_2$ is a pair of distinct lines. In this case we take $\f_2$ as the point of intersection of the lines, $\f_1 \in \ell_1$, $\f_3 \in \ell_2$, and coordinates so that $U_1=\la x_1,x_2 \ra$, $U_2=\la x_1, x_3 \ra$, $U_3=\la x_3,x_4 \ra$, so
\[ 
\left\{\begin{array}{ccl}
 \f_1 & = & x_1x_2\\
 \f_2 & = & x_1x_3\\
 \f_3 & = & x_3x_4
\end{array}\right.
\]
In this model, $\pi \cap \GG=\{\a \g=0\}$.

\medskip

\item Assume $\mathscr{C}$ is a double line; take $\f_1, \f_2$ on the line, and $\f_3$ outside, so it has rank 4; we take coordinates so that $\f_1=x_1x_2$, $\f_2=x_1x_3$, and we can write
\[
\f_3=x_1(ax_2+bx_3+cx_4)+x_2(ex_3+fx_4)+gx_3x_4\,.
\]
Now, if $f \ne 0$ then we can assume $f=1$ rescaling $x_4$ and then make the change $x'_4=ex_3+x_4$. Clearly, $\f_3=x_1(ax_2+bx_3+cx'_4)+x_2x'_4+gx_3x'_4$, so $\f'_3=\f_3-a\f_1-b\f_2$ has rank $2$, a contradiction. We deduce $f=0$. 
Now, if $g \ne 0$ we can assume $g=1$ and consider the change $x'_4=x_4-ex_2$, so $\f_3=x_1(ax_2+bx_3+cx'_4)+x_3 x'_4$, and we again obtain a contradiction considering $\f'_3=\f_3-a\f_1-b\f_2$.
Hence we must have $f=g=0$, so $e \ne 0$ and we can rescale $x_2$ so $e=1$, so $\f_3=x_1x'_4+x_2x_3$ with the change $x'_4=ax_2+bx_3+cx_4$, and $c \ne 0$ since $U_3$ has dimension $4$. We have the model 
\[ 
\left\{\begin{array}{ccl}
 \f_1 & = & x_1x_2\\
 \f_2 & = & x_1x_3\\
 \f_3 & = & x_1x_4+x_2x_3
\end{array}\right.
\]
\end{itemize}

\noindent The models obtained so far are characterized by the property that $\pi\subset\PP(\Lambda^2 U)$, for $U\subset W$ a subspace of dimension $\leq 4$. We collect them in Table \ref{table:5-3_1}. Notice that these minimal algebras are reducible.
\begin{itemize}
    \item The second column contains the intersection of $\pi$ and the Grassmannian $\GG=\GG(1,4)$;
    \item The thirds column contains the dimension $\delta$ of a subspace $\PP(\L^2 U) \subset \PP^9=\PP(\L^2 W)$, for $U\subset W$ a subspace of dimension $\leq 4$, in which $\pi$ is contained. Clearly $\delta\in\{2,5\}$;
    \item the fourth, fifth, and sixth columns contain the differentials of the non-closed elements.
\end{itemize}

\begin{table}[h]
\caption{Minimal algebras of type $(5,3)$ with $\pi\subset\PP^5$}\label{table:5-3_1}
{\tabulinesep=1mm
\begin{tabu}{|c|c|c|c|c|c|c|c|c|c|c|c|c|c|}
\hline
Label & $\pi\cap\GG$ & $\delta$ & $dx_6$ & $dx_7$ & $dx_8$\\
\hline
(5.3.1) & $\pi$ & 2 & $x_1x_2$ & $x_1x_3$ & $x_2x_3$\\
\hline
(5.3.2) & $\pi$ & 5 & $x_1x_2$ & $x_1x_3$ & $x_1x_4$\\
\hline
(5.3.3) & smooth conic & 5 & $x_1x_2$ & $x_3x_4$ & $x_1x_3+x_2x_4$\\
\hline
(5.3.4) & pair of lines & 5 & $x_1x_2$ & $x_1x_3$ & $x_3x_4$\\
\hline
(5.3.5) & double line & 5 & $x_1x_2$ & $x_1x_3$ & $x_1x_4+x_2x_3$\\
\hline
\end{tabu}}
\end{table}

%\subsection{ $\pi \cap \GG$ is infinite.} 

\noindent We move now to the case in which $\pi \not \subset \GG$ and $\pi \not \subset \PP(\L^2 U)$, for $U\subset W$ a 4-dimensional subspace. We start with an auxiliary result. 

\begin{proposition} \label{prop:pi-must-contain-line}
Suppose $\pi \subset \PP^9$ is a plane not contained in any $\PP^5=\PP(\L^2 U)$, where $U\subset W$ is a 4-dimensional subspace. If $\pi\cap\GG$ contains at least 4 points, then it contains a line.
\end{proposition}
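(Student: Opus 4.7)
The plan is to pick any four points $p_1,\ldots,p_4 \in \pi \cap \GG$, with associated rank-$2$ bivectors $\f_i$ and $2$-planes $U_i \subset W$, and show that one of the configurations must force a line of $\GG$ inside $\pi$. I will organize the argument around two trivial ways to produce such a line, and then show that ruling them out leads to a contradiction.

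First, I want to use the elementary fact that a line $\ell \subset \PP^9$ which meets $\GG$ in three or more points is entirely contained in $\GG$. This follows because $\GG$ is defined by the Plücker quadrics of \eqref{eq:G(2,5)}, and a line meeting a quadric in three points lies in it. Equivalently, for two distinct rank-$2$ bivectors $\f_i,\f_j$, the line $\la \f_i,\f_j\ra$ lies in $\GG$ iff $\dim(U_i\cap U_j)=1$; otherwise this line meets $\GG$ only at the two obvious points. Thus I get a line inside $\pi \cap \GG$ as soon as either three of the $p_i$'s are collinear in $\pi$, or some pair satisfies $\dim(U_i\cap U_j)\ge 1$.

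So I may assume (i) no three of the $p_i$'s are collinear in $\pi$, hence any three of them span $\pi$; and (ii) $U_i\cap U_j=0$ for all $i\ne j$. Take $\f_1,\f_2,\f_3$ as generators of $\pi$, and analyze $\dim(U_1+U_2+U_3)$. If it is $\le 4$, set $Y\supset U_1+U_2+U_3$ of dimension $4$; then $\f_1,\f_2,\f_3\in \L^2 Y$ gives $\pi\subset \PP(\L^2 Y)$, contradicting the hypothesis. Hence $U_1+U_2+U_3=W$.

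I now exploit the $\GL(W)$-action to put this configuration into a standard form. Pick bases $\{e_1,e_2\}$ of $U_1$ and $\{e_3,e_4\}$ of $U_2$; since $U_3$ is disjoint from both $U_1,U_2$ but not contained in $Y=U_1\oplus U_2$, the line $U_3\cap Y$ has a generator with nonzero projections to both $U_1$ and $U_2$. Adjusting the bases of $U_1$ and $U_2$ (which preserves $\f_1,\f_2$ projectively), I can arrange $U_3\cap Y=\la e_1+e_3\ra$. Completing with any $e_5\notin Y$ lying in $U_3$, and renaming, I obtain
\[
\f_1=e_1e_2,\quad \f_2=e_3e_4,\quad \f_3=(e_1+e_3)e_5.
\]
A direct computation then gives
\[
(\a\f_1+\b\f_2+\g\f_3)^2 \;=\; 2\a\b\,e_1e_2e_3e_4+2\a\g\,e_1e_2e_3e_5+2\b\g\,e_1e_3e_4e_5,
\]
which vanishes precisely when $\a\b=\a\g=\b\g=0$. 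So $\pi\cap\GG=\{[\f_1],[\f_2],[\f_3]\}$ consists of only three points, contradicting the existence of a fourth point $p_4$.

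The only real calculation is the final squaring, which is routine; the main conceptual step is checking that after restricting to the non-trivial case the normalization $\f_3=(e_1+e_3)e_5$ is always achievable, i.e.~that the $\GL(W)$-orbit of $(U_1,U_2,U_3)$ under the given hypotheses is unique. This is where care with the change of basis is required, but nothing more than the linear-algebra reductions already used repeatedly in Propositions \ref{prop:case62-rel-positions-l-subset-C} and \ref{prop:case62-rel-positions-l-not-contained-C}.
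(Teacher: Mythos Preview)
Your argument is correct and follows essentially the same route as the paper's proof: reduce to the case where three non-collinear rank-$2$ points generate $\pi$ with pairwise disjoint $U_i$, normalize to $\f_1=e_1e_2$, $\f_2=e_3e_4$, $\f_3=(e_1+e_3)e_5$, and compute that $\pi\cap\GG$ then has exactly three points. You are in fact a bit more explicit than the paper in two places: you spell out why three of the four points must be non-collinear (via the Pl\"ucker quadrics), and you justify that $U_3\cap(U_1\oplus U_2)$ is a line with nonzero projections to each factor, which the paper handles by writing $U_3=\la x_5,\,ax_1+bx_2+cx_3+dx_4\ra$ and only treating the case $a\ne 0\ne c$.
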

\begin{proof}
Let $\f_1$, $\f_2$ and $\f_3$ be generators of $\pi$, which we can assume to be in $\pi\cap\GG$. Assume by contradiction that $\pi \cap \GG$ contains no lines. This means that the lines $\la \f_i, \f_j \ra$ are not contained in $\GG$, so that $U_i \cap U_j=\{0\}$ for $i \ne j$. Therefore, we can take coordinates so that $U_1=\la x_1, x_2 \ra$, $U_2=\la x_3, x_4 \ra$, and $U_3=\la x_5, ax_1+bx_2+cx_3+dx_4 \ra$, with $(a,b) \ne (0,0) \ne (c,d)$.

%\textcolor{red}{Assume that $\pi \cap \GG$ is infinite and does not contain a line. Then we can take the three generators $\f_1, \f_2, \f_3$ of $\pi$ in $\GG$. Since the lines $\la \f_i, \f_j \ra$ are not contained in $\GG$, then $U_{\f_i} \cap U_{\f_j}=\{0\}$ for $i \ne j$, so we can take coordinates so that $U_{\f_1}=\la x_1, x_2 \ra$, $U_{\f_2}=\la x_3, x_4 \ra$, and $U_{\f_3}=\la x_5, ax_1+bx_2+cx_3+dx_4 \ra$, with $(a,b) \ne (0,0) \ne (c,d)$}. 

\item Assuming $a \ne 0 \ne c$, we put $x'_1=ax_1+bx_2$, $x'_3=cx_3+dx_4$ and we get
\[
\f_1=x_1x_2 \,, \qquad \f_2=x_3x_4 \qquad \textrm{and}
\qquad \f_3=x_5(x_1+x_3)\,.
\]
It is a straightforward computation to see that all bivectors in $\pi$ have rank $4$ except for the three generators $\f_1$, $\f_2$ and $\f_3$, so $\pi \cap \GG$ consists of three points. This gives a contradiction, hence $\pi$ contains a line.
\end{proof}

\noindent Suppose that $\pi$ is a plane not contained in any $\PP(\L^2 U)$ with $U\subset W$, $\dim U=4$, and that $|\pi\cap\GG|\geq 4$. Then $\pi \cap \GG$ contains a line by Proposition \ref{prop:pi-must-contain-line}. If we take two generators $\f_1$, $\f_2$ on this line, we can choose coordinates $x_1,x_2,x_3$ so that $\f_1=x_1x_2$, $\f_2=x_1x_3$. We handle the third generator according to different possibilities.

\noindent \textbf{Case 1.} There exists a third point $\f_3\in\pi\cap\GG$ with rank $2$. In this case, we must have that $U_3 \oplus \la x_1, x_2, x_3 \ra=W$, as $\pi$ is not contained in any special $\PP^5$. We can arrange so that $\f_3= x_4x_5$, and we have the model
\[ 
\left\{\begin{array}{ccl}
 \f_1 & = & x_1x_2\\
 \f_2 & = & x_1x_3\\
 \f_3 & = & x_4x_5
\end{array}\right.
\]
We see that $\pi \cap \GG=\ell \cup \{p\}$ is a line plus a point. In coordinates $[\a: \b: \g]$ with respect to $\f_1$, $\f_2$ and $\f_3$, we see that $\pi \cap \GG=\{\a \g=0 \, , \b \g =0 \}$,
so $\pi \cap \GG$ is indeed a simple line with an extra point, a scheme of dimension $1$ and degree $1$.

\begin{corollary} 
In the same hypotheses as in Proposition \ref{prop:pi-must-contain-line}, either $\pi\cap\GG=\ell$, or $\pi\cap\GG=\ell\cup\{p\}$, with $p\notin\ell$.
\end{corollary}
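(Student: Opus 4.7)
My plan is to reduce to the normal form treated in Case~1 just above and then compute $\pi\cap\GG$ explicitly. Under the standing hypothesis (and with $|\pi\cap\GG|\geq 4$, so that Proposition~\ref{prop:pi-must-contain-line} furnishes a line $\ell\subset\pi\cap\GG$), the first step is to dispose of the trivial case $\pi\cap\GG=\ell$, after which I fix a point $p=[\f_3]\in(\pi\cap\GG)\setminus\ell$; note $\f_3$ has rank $2$ since it lies in $\GG$.

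Next, I choose generators $\f_1=x_1x_2$ and $\f_2=x_1x_3$ of $\ell$ in suitable coordinates of $W$, as in Case~1, and show that the $2$-plane $U_3\subset W$ associated with $\f_3$ is transverse to $\la x_1,x_2,x_3\ra$. This is the key technical step, but it follows directly from the assumption $\pi\not\subset\PP(\L^2 Y)$ for any $4$-dimensional $Y\subset W$: if $U_3$ intersected $\la x_1,x_2,x_3\ra$ nontrivially, then choosing any $w\in U_3\setminus\la x_1,x_2,x_3\ra$ one would have $\f_1,\f_2,\f_3\in\L^2 Y$ for $Y=\la x_1,x_2,x_3,w\ra$, a contradiction. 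Hence $U_3\oplus\la x_1,x_2,x_3\ra=W$, and picking a basis $\{x_4,x_5\}$ of $U_3$ yields, after rescaling, $\f_3=x_4x_5$.

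Finally, I compute $\pi\cap\GG$ in these coordinates. A general element of $\pi$ reads $\f=\a x_1x_2+\b x_1x_3+\g x_4x_5$, and since $\f_1\wedge\f_2=0$ (they share the factor $x_1$) one obtains
\[
\f\wedge\f=2\a\g\,x_1x_2x_4x_5+2\b\g\,x_1x_3x_4x_5,
\]
which vanishes exactly when $\g=0$ (recovering the line $\ell$) or when $\a=\b=0$ (giving the single extra point $p$). Therefore $\pi\cap\GG=\ell\cup\{p\}$ with no further points, completing the dichotomy. I do not anticipate a serious obstacle here: the real work has already been done in Case~1, and the corollary essentially packages its conclusion into a clean geometric statement.
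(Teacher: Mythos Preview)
Your argument is correct and follows essentially the same approach as the paper: the paper's proof simply refers back to Case~1 (which it has just treated), where the model $\f_1=x_1x_2$, $\f_2=x_1x_3$, $\f_3=x_4x_5$ is derived and the intersection $\pi\cap\GG=\{\a\g=0,\ \b\g=0\}$ is computed. You have spelled out the details that the paper leaves implicit.

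One minor point: in your transversality step, the phrase ``choosing any $w\in U_3\setminus\la x_1,x_2,x_3\ra$'' tacitly assumes $U_3\not\subset\la x_1,x_2,x_3\ra$. If $U_3\subset\la x_1,x_2,x_3\ra$ then $\pi\subset\PP(\L^2\la x_1,x_2,x_3\ra)\subset\PP(\L^2 Y)$ for any $4$-dimensional $Y\supset\la x_1,x_2,x_3\ra$, yielding the same contradiction; you may wish to add a clause to cover this edge case.
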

\begin{proof}
By Proposition \ref{prop:pi-must-contain-line} we know that $\pi\cap\GG$ contains a line. Case 1 above and Case 2 below show that both $\pi\cap\GG=\ell$ and $\pi\cap\GG=\ell\cup\{p\}$ for $p\notin\ell$ can happen. Case 1 shows that, in presence of a point $p$ in $\pi\cap\GG$ but not on $\ell$, the (scheme-theoretic) intersection $\pi\cap\GG$ is $\ell\cup\{p\}$.
\end{proof}

\noindent In the remaining cases, any third generator $\f_3$ has rank $4$, so $\pi \cap \GG$ is the line $\la\f_1,\f_2\ra$.

\noindent \textbf{Case 2.} There exists $\f_3$ such that $U_1 \cap U_2 \subset U_3$. In this case we claim that we can choose generators of $\pi$, and a basis for $W$, so that
\[ 
\left\{\begin{array}{ccl}
 \f_1 & = & x_1x_2\\
 \f_2 & = & x_1x_3\\
 \f_3 & = & x_1x_4+x_2x_5
\end{array}\right.
\]
Suppose $U_1\cap U_2=\la x_1\ra$, so that $x_1 \in U_3$, and also $x_4, x_5 \in U_3$ since $\pi$ is not contained in any special $\PP^5$. Hence $U_3$ must be generated by $x_1, x_4, x_5$ and $ax_2+bx_3$ with $a$ or $b$ non zero. We can assume $a \ne 0$ so rescaling we get $a=1$. Making the change $x'_2=x_2+bx_3$, considering the new generator $\f'_1=\f_1+b\f_2=x_1x'_2$ and resetting the notation we have $U_3=\la x_1, x_2,x_4,x_5\ra$. 

\noindent In view of this $\f_3$ must have the form $\f_3=x_1(cx_4+ex_5) + x_2(fx_4+gx_5) + hx_4x_5$, with $cg-ef \ne 0$ so we can do the change $x'_4=cx_4+ex_5$, $x'_5=fx_4+gx_5$ and reset notation to obtain $\f_3=x_1x_4+x_2x_5+gx_4x_5$. If $h$ is non-zero we can assume $h=1$ by rescaling $x_5$, $x_2$ and $\f_1$. But then $\f_3+\f_1=x_1(x_2+x_4)+(x_2+x_4)x_5$ has rank $2$, a contradiction with the assumption that $\pi \cap \GG$ is a line. We deduce $g=0$, and we are done.

\noindent If we study the scheme-theoretic intersection $X\coloneqq \pi \cap \GG$, we get
$X=\mathrm{proj}(\kk[x,y,z]/(yz,z^2))$, whose Hilbert function is $h(n)=n+2$, hence $X$ has dimension $1$ and degree $1$, so it is an ordinary line.

\noindent\textbf{Case 3.} For any third generator $\f_3$ we have that $U_1 \cap U_2 \cap U_3=\{0\}$ and the annihilator $(U_1+U_2)^0$ is an isotropic plane for $\f_3$, i.e.~$\f_3$ vanishes there. We claim that we can choose generators of $\pi$ so that
\[ 
\left\{\begin{array}{ccl}
 \f_1 & = & x_1x_2\\
 \f_2 & = & x_1x_3\\
 \f_3 & = & x_2x_4+x_3x_5
\end{array}\right.
\]
Let us check first that this model satisfies the requirement: any other third generator has the form
\[
\f'_3=\a \f_1+\b \f_2+\f_3=\a x_1x_2+\b x_1x_3+x_2x_4+x_3x_5\,,
\]
hence $U'_3$, the 4-plane associated to $\f'_3$, is $\la \a x_2+\b x_3,-\a x_1+x_4,-\b x_1+x_5,-x_2,-x_3 \ra$ and does not contain $U_1 \cap U_2=\la x_1 \ra$ (if it did, it would be $5$-dimensional). It is also clear that $\f'_3$ vanishes in the plane
$\langle v_4,v_5\rangle$, where $\{v_i\}$ is the basis dual to $\{x_i\}$.

\noindent Let us see why we can always choose coordinates as above. Take any third generator $\f_3$, and note that $U_3 \cap U_i$ must be a line for $i=1,2$. Take a basis for $W$ so that $U_3\cap U_1=\la x_2 \ra$, $U_3 \cap U_2=\la x_3 \ra$, and $U_3=\la x_2,x_3,x_4,x_5 \ra$. Then
\begin{equation} \label{eq:fi3}
\f_3=x_2(ax_3+bx_4+cx_5)+x_3(ex_4+fx_5)+gx_4x_5\,;
\end{equation}
notice that $g=0$ since, by assumption, $\f_3$ vanishes in $(U_1+U_2)^0=\la v_4,v_5 \ra$. It follows that one of $b$, $c$ must be non-zero (otherwise $\f_3$ has rank $2$). We can assume that $b \ne 0$, so rescale it to get $b=1$ and do the change $x'_4=ax_3+x_4+cx_5$, so $\f_3=x_2x'_4+x_3(ex'_4+fx_5)$, and now we see that it must be $f \ne 0$, so we can assume $f=1$ and change $x'_5=ex'_4+x_5$ and we are done.

\noindent If $X=\pi \cap \GG$, an easy calculation gives $X=\mathrm{proj}(\kk[x,y,z]/(xz, yz, z^2))$, whose Hilbert function is $h(1)=3$ and $h(n)=n+1$ for $n \ge 2$, hence $X$ has dimension $1$ and degree $1$, so it is an ordinary line again.

\noindent\textbf{Case 4.} For any third generator we have $U_1 \cap U_2 \cap U_3=\{0\}$ and $\f_3$ is non-degenerate in the annihilator $(U_1+U_2)^0$. We claim that we can choose generators for $\pi$ of the form:
\[ 
\left\{\begin{array}{ccl}
 \f_1 & = & x_1x_2\\
 \f_2 & = & x_1x_3\\
 \f_3 & = & x_2x_3+x_4x_5
\end{array}\right.
\]
We check first that this model satisfies the condition: any third generator has the form 
\[
\f'_3=a \f_1+b\f_2+\f_3=ax_1x_2+bx_1x_3+x_2x_3+x_4x_5
\]
so $U'_3=\langle ax_2+bx_3,-ax_1+x_3,-bx_1-x_2,x_5,-x_4 \rangle$ does not contain $U_1 \cap U_2=\langle x_1 \rangle$ (as above, it would be 5-dimensional otherwise). Also, $\f'_3$ is non-degenerate in $(U_{\f_1}+U_{\f_2})^0=\langle v_4,v_5\rangle$.

\noindent Now we show how to get the above model. Take a third generator $\f_3$ and choose coordinates so that $U_3=\la x_2,x_3, x_4, x_5\ra$, as was done in the previous paragraph. Now the form of $\f_3$ is as in \eqref{eq:fi3} but with $g \ne 0$, so we can assume $g=1$ and write
\[
\f_3=ax_2x_3+cx_2x_5+fx_3x_5+x_4(x_5-bx_2-ex_3)
\]
and put $x'_5=x_5-bx_2-ex_3$ so that (after resetting the notation) $\f_3=ax_2x_3+cx_2x_5+fx_3x_5+x_4x_5$, with $a\ne 0$ since $\f_3$ has rank $4$. We rescale $x_2$ and $\f_1$ so that get $a=1$. Reset notation and write $\f_3=x_2x_3+(cx_2+fx_3+x_4)x_5$, and the change $x'_4=x_4+cx_2+fx_3$ yields the desired model. Finally, the intersection $X=\pi \cap \GG$ is $X=\mathrm{proj}(\kk[x,y,z]/(xz, yz, z^2))$, which is isomorphic, as a scheme, to the one obtained in Case 3.

\noindent In order to give a more intrinsic characterization of the above cases, denote by $\tau$ the restriction to $\la\f_1,\f_2,\f_3\ra\subset\Lambda^2 W$ of the linear map $\Lambda^2 W\to\Lambda^4 W$, $\f\mapsto\f\wedge\f_3$, followed by the isomorphism $\Lambda^4 W\to W^*\otimes\Lambda^5 W$.
\begin{itemize}
    \item $\tr{Im}(\tau)$ has dimension 2 in Case 1, 1 in Case 2 and 2 in Cases 3 and 4;
    \item $\tr{Im}(\tau)$ is an isotropic subspace for $\f_3$ in Case 3 , while the restriction of $\f_3$ to $\tr{Im}(\tau)$ is non-degenerate in Case 4. 
\end{itemize}

\noindent The last four models are characterized by the property that $\pi\not\subset\PP(\Lambda^2 U)$, for any proper subspace $U\subset W$, and $\pi\cap\GG$ contains a line. We collect these results in Table \ref{table:5-3_2}.
\begin{itemize}
    \item The second column contains the intersection of $\pi$ and the Grassmannian $\GG=\GG(1,4)$;
    \item The thirds column contains the description of $\tr{Im}(\tau)$;
    \item the fourth, fifth, and sixth columns contain the differentials of the non-closed elements;
    \item the seventh column says whether the minimal algebra is {\em irreducible}, i.e.~it is not the sum of lower-dimensional minimal algebras;
    \item in case it is irreducible, the eighth column identifies our algebra with the Lie algebra in the list obtained in \cite{Ren2011}.
\end{itemize}

\begin{table}[h]
\caption{Minimal algebras of type $(5,3)$ with $\pi\not\subset\PP^5$, $\pi\cap\GG$ contains a line}\label{table:5-3_2}
{\tabulinesep=1mm
\begin{tabu}{|c|c|c|c|c|c|c|c|c|c|c|c|c|c|}
\hline
Label & $\pi\cap\GG$ & $\tr{Im}(\tau)$ & $dx_6$ & $dx_7$ & $dx_8$ & Irreducible & \cite{Ren2011}\\
\hline
(5.3.6) & $\ell\cup\{p\}$ & 2-dimensional & $x_1x_2$ & $x_1x_3$ & $x_4x_5$ & $\times$ & \\
\hline
(5.3.7) & $\ell$ & 1-dimensional & $x_1x_2$ & $x_1x_3$ & $x_1x_4+x_2x_5$ & \checkmark & $N_5^{8,3}$\\
\hline
(5.3.8) & $\ell$ & 2d, isotropic & $x_1x_2$ & $x_1x_3$ & $x_2x_4+x_3x_5$ & \checkmark & $N_2^{8,3}$\\
\hline
(5.3.9) & $\ell$ & 2d, non-degenerate & $x_1x_2$ & $x_1x_3$ & $x_2x_3+x_4x_5$ & \checkmark & $N_1^{8,3}$\\
\hline
\end{tabu}}
\end{table}

\noindent Next, we tackle the case in which $X\coloneqq\pi\cap \GG$ does not contain a line. In view of Proposition \ref{prop:pi-must-contain-line}, this amounts to $|X|\leq 3$.

%\subsection{$X$ is finite, $|X| \ge 4$}
%We start with the case that $X$ is finite by discarding the possibility that $|X| \ge 4$.

%\begin{proposition} \label{prop:case-(5,3)-pi-cap-G-finite}
%If $X=\pi \cap \GG$ is a finite set, then $X$ contains at most $3$ points.
%\end{proposition}
%\begin{proof}
%Let us see that if $X$ contains 3 points, it cannot contain a fourth one (unless it is infinite). Consider three points $\f_1, \f_2, \f_3$ in $X$, and note that $U_{\f_i} \cap U_{\f_j}=\{0\}$ for $i \ne j$, since otherwise the line generated by $\f_i, \f_j$ would be in $X$. Hence we can choose a basis for $W$ that $\f_1=x_1x_2$, $\f_2=x_3x_4$, and $\f_3=x_5 (ax_1 + bx_2 + cx_3 + dx_4)$, with $(a,b) \ne (0,0) \ne (c,d)$.
%With a change of coordinates as in Proposition \ref{prop:pi-must-contain-line} we get  $\f_3=x_5(x_1+x_3)$, and now it is easy to see that every linear combination $\f=\a \f_1 + \b \f_2+ \g \f_3$ has rank $4$ except for $\f_1, \f_2, \f_3$, so $X$ does not contain any fourth point.
%\end{proof}

\noindent Let us deal first with the case $|X|=3$. Consider three points $\f_1$, $\f_2$ and $\f_3$ in $X$, and note that $U_i \cap U_j=\{0\}$ for $i \ne j$, since otherwise the line generated by $\f_i, \f_j$ would be in $X$. Hence we can choose a basis for $W$ such that $\f_1=x_1x_2$, $\f_2=x_3x_4$, and $\f_3=x_5 (ax_1 + bx_2 + cx_3 + ex_4)$, with $(a,b) \ne (0,0) \ne (c,e)$. With a change of coordinates as in Proposition \ref{prop:pi-must-contain-line} we get $\f_3=x_5(x_1+x_3)$, and now it is easy to see that every linear combination $\f=\a \f_1 + \b \f_2+ \g \f_3$ has rank $4$ except for $\f_1, \f_2, \f_3$, so $X$ does not contain any fourth point. We get the model
\[ 
\left\{\begin{array}{ccl}
 \f_1 & = & x_1x_2\\
 \f_2 & = & x_3x_4\\
 \f_3 & = & x_1x_5+x_3x_5
\end{array}\right.
\]
Let us study $X \subset \pi$ as a subvariety. Points of $\pi$ are parameterized as $\{a_{12}=\a,a_{34}=b,a_{15}=a_{35}=\gamma\}$; plugging this into \eqref{eq:G(2,5)} we get 
$X=\{
\a \b=\a \g=\b \g=0\}$, so $X$ is a three-points set. As a scheme, $X=\mathrm{Proj}(\kk[x,y,z]/(xy,xz,yz))$ has Hilbert function $h(n)=3$, so it has dimension $0$ and degree 3. This confirms that $X$ is a three-points scheme. 

\noindent We consider now the case $|X|=2$. We call $\f_1, \f_2$ the points in $X$. Note that $U_{12}\coloneqq U_1 + U_2$ must have dimension $4$. If $\f\in\pi$ is not collinear with $\f_1, \f_2$, $U_\f$ has dimension $4$ and cannot contain both $U_1$ and $U_2$, for otherwise $\pi$ would be contained in a special $\PP^5\cong\PP(\L^2 U_{\f})$. Hence $\dim(U_\varphi+U_{12})=5$, $\dim(U_{\f} \cap U_{12})=3$ and $1\leq \dim(U_{\f} \cap U_i)\leq 2$, for $i=1,2$. We have two cases:
\[
U_1\subset U_{\f}\,, \ \dim(U_{\f} \cap U_2)=1 \qquad \tr{and} \qquad \dim(U_{\f} \cap U_i)=1\,, i=1,2\,.
\]
\noindent As we shall see a posteriori, those properties can also be distinguished by studying $X$ scheme-theoretically, according to the existence of points with multiplicity.

\noindent \textbf{Case 1.} $\pi$ has a third generator $\f_3$ such that $U_1\subset U_3$ and $\dim(U_3 \cap U_2)=1$. We shall see that one can choose coordinates so that $\pi=\la \f_1,\f_2,\f_3\ra$ with
\[ 
\left\{\begin{array}{ccl}
 \f_1 & = & x_1x_2\\
 \f_2 & = & x_3x_4\\
 \f_3 & = & x_1x_3+x_2x_5
\end{array}\right.
\]
Indeed, we arrange first that $U_1=\la x_1, x_2 \ra$, $U_2=\la x_3, x_4 \ra$ and $U_3=\la x_1, x_2, x_3, x_5 \ra$, so $\f_3$ has the form
\[
x_1(ax_3+bx_5)+x_2(cx_3+ex_5)+fx_3x_5
\]
with one of $b, e \ne 0$ since $\f_3$ has rank $4$. By swapping $x_1$ and $x_2$ if necessary, we assume $e\ne 0$, so $e=1$ rescaling $x_5$; we make the change $x'_5=x_5+cx_3$. Upon resetting notation, 
\[
\f_3=x_1(ax_3+bx_5)+x_2x_5+fx_3x_5=ax_1x_3+(bx_1+x_2)x_5+fx_3x_5 \,.
\]
It must be $a \ne 0$, so we assume $a=1$ by rescaling $x_1$ and $\f_1$. Make the change $x'_2=x_2+bx_1$, so $\f_3=x_1x_3+(x_2+fx_3)x_5$. If $f \ne 0$ then we could assume $f=1$ by rescaling $x_3$ and $x_1$ (and $\f_2$, $\f_1$ accordingly), but then $\f_3+\f_1$ would have rank $2$, a contradiction. We deduce $f=0$ and we are done.

\noindent As a scheme, $X=\mathrm{Proj}(\kk[x,y,z]/(xy,z^2,yz))$. Hence $X$ is a two-points set with Hilbert function $h(n)=3$, hence $X$ has dimension $0$ and degree $3$; we deduce that one of the two points is double. Indeed, in our coordinates $p=[1:0:0]$ is a double point: take the affine chart $A=\{x \ne 0\}$, in which $p=(0,0)$ and $X|_A \cong \mathrm{Spec}(\kk[z]/(z^2))$.

\noindent \textbf{Case 2.} Any third generator $\f_3$ of $\pi$ satisfies that $U_3$ intersects both $U_1$ and $U_2$ in a line. In this case we can choose coordinates so that $\pi=\la \f_1,\f_2,\f_3\ra$ with
\[ 
\left\{\begin{array}{ccl}
 \f_1 & = & x_1x_2\\
 \f_2 & = & x_3x_4\\
 \f_3 & = & x_1x_3+(x_2+x_4)x_5
\end{array}\right.
\]
First note that the plane given above satisfies the requirement: another third generator $\f'_3=\a \f_1+\b \f_2+\f_3$ has associated subspace $U'_3=\la\a x_2+x_3,\a x_1-x_5,x_1-\b x_4,x_5-\b x_3,x_2+x_4\ra$, which is easily seen to be 4-dimensional. Moreover, if $U_1$ or $U_2$ were contained in $U'_3$, then $U'_3$ would be $5$-dimensional, a contradiction.

\noindent Let us show how to choose coordinates to obtain the claimed model. We first arrange that $U_1=\la x_1, x_2 \ra$, $U_2=\la x_3, x_4 \ra$, $U_3=\la x_1, x_3, x_5, x_2+x_4 \ra$ with the usual argument. Then
\begin{align}\label{eq:5:3:1}
\f_3&=ax_1x_3+bx_1x_5+cx_1(x_2+x_4)+ex_3x_5+fx_3(x_2+x_4)+gx_5(x_2+x_4)\nonumber\\
&=ax_1x_3+(cx_1+fx_3+gx_5)(x_2+x_4)+(bx_1+ex_3)x_5\,.
\end{align}
Let us assume for the moment that $g \ne 0$, so that we can rescale $x_5$ and assume $g=1$. We make the change $x'_5=x_5+cx_1+fx_3$, rename and obtain
\[
\f_3=ax_1x_3+x_5(x_2+x_4)+(bx_1+ex_3)x_5=ax_1x_3+(bx_1-x_2+ex_3-x_4)x_5\,.
\]
%\[
%\f_3==
%\]
%rescaling $x_3$ and $\f_2$ adequately and resetting notation as usual, the expression of $\f_3$ becomes
%\[
%\f_3=x_1x_3+x_5(x_2+x_4)+(bx_1+ex_3)x_5 = x_1x_3+(bx_1-x_2+ex_3-x_4) x_5\,.
%\]
The further change $x'_2=bx_1-x_2$, $x'_4=ex_3-x_4$, followed by adequately rescaling $x_3$ and $\f_2$, yields the desired model. Let us shows that it must indeed be $g \ne 0$ in \eqref{eq:5:3:1}. If it was $g=0$, then
\[
\f_3=ax_1x_3+x_1(c(x_2+x_4)+bx_5)+x_3(f(x_2+x_4)+ex_5)
\]
and $\f_3^2=-2(ce-bf)x_1x_3(x_2+x_4)x_5$, so we must have $ce-bf \ne 0$, so $(c,f) \ne (0,0) \ne (b,e)$. By swapping $x_1, x_3$ if necessary (and $x_2,x_4$ consequently), we can assume that $c \ne 0 \ne e$, and rescaling $x_2,x_4$ and $x_5$ we can assume $c=e=1$, so that $\f_3=ax_1x_3+x_1(x_2+x_4+bx_5)+x_3(f(x_2+x_4)+x_5)$. Consider now a generic third generator for $\pi$ of the form
\[
\f'_3=\f_3 + \a \f_1+\b\f_2 =x_1((1+\a)x_2+x_4+bx_5)+x_3(fx_2+(f+\b)x_4+x_5)+ax_1x_3\,.
\]
Imposing $(\f'_3)^2\neq 0$ we see that at least one of the coefficients 
\[
\b+\a f+\a\b\,, \quad 1+\a-bf\,, \quad 1-bf-b\b
\]
must be non-zero. Then $U'_3$ is generated by
\[
\begin{cases}
    (1+\a)x_2+x_4+bx_5+ax_3 \\
    (1+\a)x_1+fx_3 \\
    fx_2+(f+\b)x_4+x_5-ax_1\\
    x_1+(f+\b)x_3 \\
    bx_1+x_3
\end{cases}
\]
We see that $x_1,x_3 \in U'_3$, hence $U'_3=\la x_1,x_3,(1+\a)x_2+x_4+bx_5,fx_2+(f+\b)x_4+x_5\ra$. Therefore
\[
(1+\a)x_2+x_4+bx_5-b(fx_2+(f+\b)x_4+x_5)= (1+\a-bf)x_2+(1-bf-\b)x_4 \in U'_3\, .
\]
But if we take now $\a=bf-1 \ne -\b$, we deduce that $U_1\subset U'_3$, a contradiction.

%\texttt{No veo que esto funcione. ¿Qué pasa si $b=0$ o $f=0$? $\a$ y $\b$ no son parámetros a elegir, sino que hay que comprobar que lo que queremos se cumple para todo $\a,\b$}.

%\textcolor{green}{Las constantes $\a, \b$ las puedes elegir, porque este caso está bautizado como 'todo tercer generador $\f_3$ cumple bla bla', entonces deberia ocurrir que $U_{\f'_3}$ interseque con $U_1$, $U_2$ en una recta para cualesquiera valores de $\a$, $\b$. Es un poco lo que hablamos antes: en todos estos casos en que $|X|$ es finito, el tercer generador (el de rango 4) no define una propiedad intrínseca del álgebra. Lo intrínseco del álgebra es la existencia (o no existencia) de un tercer generador cumpliendo tal o cual propiedad. O mejor dicho: la subvariedad de los 'terceros generadores' satisfaciendo tal propiedad.}

\noindent As for the scheme-theoretic nature of $X$, the model shows that 
\[
X=\mathrm{Proj}(\kk[x,y,z]/(xy,z^2,xz,yz))\,,
\]
with Hilbert function $h(1)=3$ and $h(n)=2$ for $n \ge 2$. Hence $X$ is 0-dimensional and of degree $2$, and it consists of two simple points. 

\medskip

\noindent To finish, we deal with the case $|X|=1$. Put $X=\{\f_1\}$ and let $\f_2, \f_3$ denote points of $\pi$ which, together with $\f_1$, generate $\pi$. The choice of $\f_1$ is canonical up to rescaling, but the generators $\f_2, \f_3$ can be changed. Notice that $U_2 \cap U_3$ must have dimension $3$. Indeed, $\dim(U_2\cap U_3)\geq 3$ and if it were $U_2=U_3=U$ with $\dim U=4$, then the line $\PP(\la \f_2, \f_3 \ra)$ would intersect the Klein quadric $\GG \cap \PP(\L^2 U)$, so $X$ would have more than one point, contradicting our assumption.
Also, $\dim(U_1\cap U_i)\geq 1$ for $i=2,3$. We have further subcases, according to whether two, one, or none in $\{\f_2, \f_3\}$ have associated vector space containing $U_1$.

\noindent \textbf{Case 1.} $\pi$ has two generators $\f_2, \f_3$ such that $U_2$ and $U_3$ contain $U_{\f_1}$. We obtain simple generators in the following lemma.

\begin{lemma}\label{lem:case5-3-13}
Assume $X=\{\f_1\}$ and $\pi$ is generated by $\f_1, \f_2, \f_3$ so that both $U_2$ and $U_3$ contain $U_1$. Then we can choose (maybe different) generators $\f_2, \f_3$ for $\pi$ and coordinates $x_i$ for $W$ so that
\[ 
\left\{\begin{array}{ccl}
 \f_1 & = & x_1x_2\\
 \f_2 & = & x_1x_3+x_2x_4\\
 \f_3 & = & x_1x_5+x_2x_3
\end{array}\right.
\]
\end{lemma}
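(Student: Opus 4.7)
The plan is to build the desired basis $\{x_1,\ldots,x_5\}$ of $W$ in three stages, at each step leveraging the hypothesis $\pi\cap\GG=\{[\f_1]\}$ to rule out degenerate configurations. First, I would observe that the pencil $\la\f_1,\f_2\ra$ defines a line in $\PP(\Lambda^2 U_2)\cong\PP^5$ through $[\f_1]\in\GG(U_2)$ which contains no other rank-$2$ point, hence is tangent to the Klein quadric at $[\f_1]$; applying Proposition~\ref{prop:case62-rel-positions-l-subset-C}, subcase~2.1 would then yield a basis $\{x_1,x_2,x_3,x_4\}$ of $U_2$ with $\f_1=x_1x_2$ and $\f_2=x_1x_3+x_2x_4$. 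Next, since $U_2\cap U_3$ is $3$-dimensional and contains $U_1$, it equals $U_1\oplus\la ax_3+bx_4\ra$; the parabolic subgroup $P\subset\Sp(\f_2)\cap\mathrm{Stab}(\f_1)$ acts through $\mathrm{GL}_2$ on the quotient $U_2/U_1$, so I can arrange $x_3\in U_2\cap U_3$ and then complete to a basis of $W$ by choosing $x_5\in U_3\setminus U_2$, giving $U_3=\la x_1,x_2,x_3,x_5\ra$.

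Second, I would expand $\f_3\in\Lambda^2 U_3$ in these coordinates and subtract $a_{12}\f_1$ to assume $a_{12}=0$. The main technical obstacle is to show that $|\pi\cap\GG|=1$ forces $a_{35}=0$ and $a_{15}\neq 0$. For $a_{35}=0$: if $a_{35}\neq 0$, an appropriate shift of $x_5$ would reduce $\f_3$ to the form $v\wedge x_5'+c\,x_1x_2$ with $c\neq 0$, so that $\f_3-c\f_1$ is a second rank-$2$ bivector in $\pi$ not proportional to $\f_1$, contradicting the hypothesis. For $a_{15}\neq 0$: if $a_{15}=0$ a direct calculation gives
\[
(\alpha\f_1+\beta\f_2+\gamma\f_3)^2 = -2(\beta+\gamma a_{13})\bigl[\beta\,x_1x_2x_3x_4 + \gamma a_{25}\,x_1x_2x_3x_5\bigr],
\]
which vanishes identically on the projective line $\{\beta+\gamma a_{13}=0\}$, producing a whole line of rank-$2$ bivectors in $\pi$ and again contradicting $|\pi\cap\GG|=1$.

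Finally, with $a_{15}\neq 0$ and Pfaffian $D=a_{13}a_{25}-a_{15}a_{23}\neq 0$, I would apply a parabolic transformation $T$ of $U_2$ fixing $\f_1$, $\f_2$ and $U_2\cap U_3$, parametrised on $U_1$ by the matrix $\bigl(\begin{smallmatrix}a&0\\c&1/a\end{smallmatrix}\bigr)$: a direct computation shows that the choice $a=1/a_{15}$, $c=-a_{25}$ transforms $\f_3$ into $a_{13}x_1x_3+x_1x_5-Da_{15}\,x_2x_3$. The shift $x_5\mapsto x_5+a_{13}x_3$ then removes the $x_1x_3$ term, leaving $\f_3=x_1x_5+M\,x_2x_3$ with $M=-Da_{15}\neq 0$. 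Choosing $\lambda\in\kk$ with $\lambda^2=M$ (possible since $\kk$ is algebraically closed) and applying the simultaneous rescaling $x_1\mapsto x_1/\lambda$, $x_2\mapsto\lambda x_2$, $x_3\mapsto\lambda x_3$, $x_4\mapsto x_4/\lambda$, $x_5\mapsto\lambda x_5$ preserves both $\f_1$ and $\f_2$ and sends $\f_3$ to $x_1x_5+x_2x_3$, as required.
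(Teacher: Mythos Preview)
Your proof is correct and follows the same overall architecture as the paper's: normalize $\f_1$ and $\f_2$ first, pin down $U_3=\la x_1,x_2,x_3,x_5\ra$, then reduce $\f_3$ coefficient by coefficient using the constraint $\pi\cap\GG=\{[\f_1]\}$. The main difference is one of style. Where the paper works by explicit ad~hoc coordinate changes, you phrase the same reductions in terms of the parabolic in $\Sp(\f_2)$ stabilizing the Lagrangian $U_1$ and its Levi $\GL_2$ acting on $U_2/U_1$. Your argument that $a_{15}\neq 0$ (by exhibiting an entire projective line $\{\beta+\gamma a_{13}=0\}$ of rank-$2$ bivectors when $a_{15}=0$) is actually cleaner than the paper's corresponding step, which relies on a permutation of $(x_1,x_2)$ and $(x_3,x_4)$ to force two coefficients nonzero simultaneously. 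Your argument that $a_{35}=0$ is essentially the same as the paper's, just packaged as a shift of $x_5$ rather than a direct linear combination.

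Two small remarks. First, your invocation of Proposition~\ref{prop:case62-rel-positions-l-subset-C}(2) is valid because subcase~2.1 there is carried out entirely inside a $4$-dimensional $Y$, so it transfers verbatim to $U_2\subset W$; but since that proposition is stated for $\dim W=6$, it might be cleaner to redo the short argument directly (as the paper does). Second, your final rescaling with $\lambda^2=M$ is correct over an algebraically closed field, but the square root is avoidable: rescaling $x_3\mapsto x_3/M$, $x_4\mapsto Mx_4$ and replacing $\f_2$ by $M\f_2$ already gives $\f_3=x_1x_5+x_2x_3$ without extracting a root. The paper's endgame is organized so that no square root appears.
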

\begin{proof}
As usual, we denote $\f_2, \f_3$ two rank-4 generators for $\pi$ that may change along the process, and $x_i$ coordinates for $W$ that may also change.
We may take initial coordinates so that $\f_1=x_1x_2$ and $U_2=\langle x_1,x_2,x_3,x_4\rangle$, so $\f_2$ has the form
\[
\f_2=x_1(ax_3+bx_4)+x_2(cx_3+ex_4)+fx_3x_4
\]
for some $a,b,c,e,f\in\kk$. We can assume that the term $x_1x_2$ does not appear in $\f_2$ by subtracting a multiple of $\f_1$. Since $\f_2$ has rank 4, $ae-bc\neq 0$, and we can consider the change of coordinates $x'_3=ax_3+bx_4$, $x'_4=cx_3+ex_4$. We relabel the coordinates so that $\f_2=x_1x_3+x_2x_4+fx_3x_4$.
If $f \ne 0$, then we could arrange $f=1$ by rescaling $x_3$, $x_1$ and $\f_1$. But then the linear combination $\f_2+ \f_1=x_1(x_2+x_3)+(x_2+x_3)x_4$ would have rank $2$, a contradiction. Hence $f=0$ and $\f_2=x_1x_3+x_2x_4$. 

\item We know that $U_2\cap U_3$ has dimension three, so we can assume (maybe permuting $x_3,x_4$ and $x_1,x_2$ if necessary) that $U_2 \cap U_3=\langle x_1,x_2,x_3+bx_4\rangle$ for some $b\in\kk$. Changing $x'_3=x_3+bx_4$ we get $\f_2=x_1x'_3+(x_2-bx_1)x_4$, so if $x'_2=x_2-bx_1$ we get $\f_1=x_1x'_2$, $\f_2=x_1x'_3+x'_2x_4$, and $U_2\cap U_3=\langle x_1,x'_2,x'_3\rangle$, so $U_3=\langle x_1,x_2,x'_3,x_5\rangle$. We relabel again and write $\f_1=x_1x_2$, $\f_2=x_1x_3+x_2x_4$ and
\[
\f_3=x_1(ax_3+bx_5)+x_2(cx_3+ex_5)+fx_3x_5
\]
for some $a,b,c,e,f$. Since $\f_3$ has rank four we have $ae-bc \ne 0$. By permuting $x_1,x_2$ if necessary (and $x_3,x_4$ consequently) we can assume that both $b$ and $c$ are non-zero. By rescaling the coordinates $x'_5=bx_5$, $x'_3=cx_3$, $x'_4=cx_4$, and setting $\f'_2=c\f_2$, we may assume that $b=c=1$. With the further change $x'_5=ax_3+x_5$ we obtain $\f_3=x_1x'_5+x_2((1-ae)x_3+ex'_5)+fx_3x'_5$ and since $1-ae \ne 0$ we can rescale $x_3$, $x_4$ and $\f_2$ so that
\begin{equation} \label{eq:f_3-cases-e}
    \f_3=x_2x_3+(x_1+ex_2+fx_3)x_5 \, .
\end{equation}
Now we distinguish cases according to the value of $f$. If $f=0$ we make a change $x'_1=x_1+dx_2$, so $\f_3=x_2x_3+x'_1x_5$, $\f_1=x'_1x_2$, and 
\[
\f_2=(x'_1-ex_2)x_3+x_2x_4=x'_1x_3+x_2(-ex_3+x_4)
\]
so the proof is finished by putting $x'_4=x_4-ex_3$, since $\f_1, \f_2$ and $\f_3$ are expressed as in our desired model. To finish, we show that $f \ne 0$ in \eqref{eq:f_3-cases-e} leads to a contradiction. Indeed, in that case we could consider the linear combination
\[
\f'_3=f\f_3-\f_1= x_2(x_1+ex_2+fx_2)+f(x_1+ex_2+fx_3)x_5
\]
which has rank two, a contradiction.
\end{proof}

\noindent We study $X$ as a scheme. We have $X=\mathrm{Proj}(\kk[x,y,z]/(y^2,z^2,yz))$, so set-theoretically $X$ is the point $[1:0:0]$. Scheme-theoretically, it has Hilbert polynomial $h(n)=3$, so $X$ has dimension $0$ and degree $3$. This is a model for a triple point in a plane. Let us compute the tangent space: take the affine chart $A=\{x \ne 0\}$, so $X|_A=\mathrm{Spec}\left(\kk[y,z]/(y^2,yz,z^2)\right)$. The cotangent space at $(0,0)$ is $\{a y + b z, (a,b) \in \kk^2\}$, which has dimension $2$. We see that $X$ is a triple point with infinitesimal information given by a plane of tangent directions.

\noindent \textbf{Case 2.} $\pi$ contains exactly one line $\langle \f_1, \f_2\rangle$ such that for any $\f_2$ generating it, $U_2$ contains $U_1$, and there exists a third generator $\f_3$ such that $\f_3\left(U_2^0\right)$ is a line contained in $U_1$. Recall that for $U \subset W$ a subspace, we denote $U^0 \subset W^*$ its annihilator. We obtain a model for this case in the following Lemma. 

\begin{lemma}\label{lem:case5-3-14}
Assume $X=\{\f_1\}$ and that the set of bivectors containing $U_1$ forms a line, say $\langle \f_1, \f_2 \rangle =\{\f \in \pi \mid U_1\subset U_\f\}$. Assume moreover that there is a third generator $\f_3$ of $\pi$ so that $0 \ne \f_3(U^0_2) \subset U_1$. Then there are coordinates for $W$, and a choice of generators $\f_1, \f_2, \f_3$ for $\pi$ so that
\[ 
\left\{\begin{array}{ccl}
 \f_1 & = & x_1x_2\\
 \f_2 & = & x_1x_3+x_2x_4\\
 \f_3 & = & x_1x_5+x_3x_4
\end{array}\right.
\]
\end{lemma}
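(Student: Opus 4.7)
The plan is to adapt the strategy of Lemma \ref{lem:case5-3-13} to the present case. First I would apply the initial sequence of elementary changes of basis used in the first part of that lemma---which only uses $U_1\subset U_2$, $\rank\f_1=2$ and $\rank\f_2=4$---to obtain coordinates with $\f_1=x_1x_2$ and $\f_2=x_1x_3+x_2x_4$. In these coordinates $U_2=\la x_1,x_2,x_3,x_4\ra$, so $U_2^0=\la v_5\ra$, where $\{v_i\}$ denotes the basis dual to $\{x_i\}$. Writing $\f_3=\sum_{i<j}a_{ij}x_ix_j$, a direct computation gives $\f_3^{\#}(v_5)=-(a_{15}x_1+a_{25}x_2+a_{35}x_3+a_{45}x_4)$, so the hypothesis $0\ne\f_3(U_2^0)\subset U_1$ translates into $a_{35}=a_{45}=0$ and $(a_{15},a_{25})\ne(0,0)$.

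Next I would put $\f_3$ in the form $x_1x_5+\xi$ with $\xi\in\L^2\la x_2,x_3,x_4\ra$. By swapping $x_1\leftrightarrow x_2$ together with $x_3\leftrightarrow x_4$ if necessary (these permutations preserve $\f_1$ and $\f_2$ up to sign or reordering of summands) I may assume $a_{15}\ne 0$ and rescale so $a_{15}=1$. The shift $x_5\mapsto x_5+a_{25}x_2$ kills the $x_2x_5$ term, subtracting appropriate multiples of $\f_1$ and $\f_2$ removes the $x_1x_2$ and $x_1x_3$ contributions, and a further shift $x_5\mapsto x_5+cx_4$ cancels the residual $x_1x_4$ term. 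This leaves
\[
\f_3=x_1x_5+dx_2x_3+ex_2x_4+gx_3x_4.
\]
The key step is forcing $g\ne 0$: a direct computation of the columns of $\f_3^{\#}$ shows that $x_2\in U_3$ if and only if $g=0$, while the standing hypothesis $\la\f_1,\f_2\ra=\{\f\in\pi\mid U_1\subset U_\f\}$ guarantees $U_1\not\subset U_3$. Hence $g\ne 0$, and rescaling $x_3,x_4$ (with compensating rescalings of $\f_1,\f_2$) lets us assume $g=1$, so $\xi=(x_3+ex_2)\wedge(x_4-dx_2)$. The substitutions $x'_3=x_3+ex_2$, $x'_4=x_4-dx_2$ fix $\f_1$ and transform $\f_2$ into $x_1x'_3+x_2x'_4-e\f_1$, so replacing the generator $\f_2$ by $\f_2+e\f_1$ and relabelling yields the claimed model.

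I expect the main obstacle to lie not in any single calculation but in the bookkeeping of simultaneously tracking the three generators through the reductions, and in verifying that each rescaling of a coordinate can be absorbed into an admissible rescaling of $\f_1$ or $\f_2$. As a sanity check, one may verify directly in the proposed model that $\pi\cap\GG=\{[\f_1]\}$ via the Plücker equations \eqref{eq:G(2,5)}, and that $\gamma\ne 0$ in $\alpha\f_1+\beta\f_2+\gamma\f_3$ implies $x_2\notin U_{\alpha\f_1+\beta\f_2+\gamma\f_3}$, which confirms that the standing hypothesis on bivectors containing $U_1$ is consistent with the final model.
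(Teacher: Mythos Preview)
Your approach is essentially correct and in fact somewhat more streamlined than the paper's. The paper first prepares the basis geometrically---arranging $U_1\cap U_3=\la x_1\ra$ and then, via affine-line arguments, forcing $x_3,x_4\in U_3$ so that $U_3=\la x_1,x_3,x_4,x_5\ra$---and only afterwards writes $\f_3$ in coordinates and applies the hypothesis $\f_3(U_2^0)\subset U_1$. You instead invoke the hypothesis immediately to kill $a_{35},a_{45}$ and then reduce by elementary changes. Both routes land on the same model; yours uses the hypothesis earlier and avoids the intermediate geometric bookkeeping on $U_3$, at the price of having to argue separately that $g\ne 0$ (which the paper gets for free from $x_2\notin U_3$).

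There is one genuine slip. The claim ``the shift $x_5\mapsto x_5+a_{25}x_2$ kills the $x_2x_5$ term'' is false: writing $x_5=x_5'-a_{25}x_2$ gives
\[
x_1x_5+a_{25}x_2x_5=x_1x_5'-a_{25}x_1x_2+a_{25}x_2x_5',
\]
so the $x_2x_5'$ coefficient is still $a_{25}$. The correct move is to shift $x_1$ rather than $x_5$: set $x_1'=x_1+a_{25}x_2$, so that $x_1x_5+a_{25}x_2x_5=x_1'x_5$. This preserves $\f_1=x_1x_2=x_1'x_2$, and $\f_2=x_1x_3+x_2x_4=x_1'x_3+x_2(x_4-a_{25}x_3)$, so with the compensating shift $x_4'=x_4-a_{25}x_3$ you recover $\f_2=x_1'x_3+x_2x_4'$. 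After this correction the remainder of your argument (subtracting multiples of $\f_1,\f_2$, the shift of $x_5$ by a multiple of $x_4$, the verification $g\ne0$ via $x_2\notin U_3$, and the final substitution $x_3'=x_3+ex_2$, $x_4'=x_4-dx_2$) goes through verbatim.
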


\noindent Notice that the model above satisfies the condition of the lemma: any bivector of the form $\f=\a\f_1+\b\f_2 + \f_3$ satisfies $U_{\f} \cap U_1=\la x_1 \ra$, hence the bivectors containing $U_1$ form a line.
\begin{proof}
As in the first part in the proof for Lemma \ref{lem:case5-3-13}, we get initial coordinates so that $\f_1=x_1x_2$, $\f_2=x_1x_3+x_2x_4$. Take any third generator $\f_3$. Note that $U_{23}\coloneqq U_2\cap U_3$ has dimension three and $U_{13}\coloneqq U_1\cap U_3$ has dimension one. Let us see that, after a suitable change, $U_{13}=\la x_1 \ra$. Indeed, permuting the pairs $(x_1,x_3)$ and $(x_2,x_4)$ if necessary we can assume $U_1 \cap U_3=\la x_1+bx_2 \ra$, so make the change $x'_1=x_1+bx_2$ and $\f_2=x'_1x_3+x_2x'_4$ with $x'_4=x_4-bx_3$. Reset notation and start again.

\noindent We now arrange so that $x_3, x_4 \in U_3$. The affine line $x_3 + \la x_2 \ra$ must intersect $U_3$ in a point, so we find $a\in\kk$ such that $x_3+ax_2 \in U_{\f_3}$; define $x'_3=x_3+ax_2$. Analogously, do the change $x'_4=x_4+bx_2$ for suitable $b\in\kk$. The generator $\f_2$ changes to $\f_2=x_1x'_3+x_2x'_4-ax_1x_2$, so we consider $\f'_2=\f_2+a\f_1=x_1x'_3+x_2x_4$. Reset notation again and we have $\f_1=x_1x_2$, $\f_2=x_1x_3+x_2x_4$, and $U_3=\la x_1, x_3,x_4,x_5 \ra$, so
\[
\f_3=x_5(ax_1+bx_3+cx_4)+x_4(ex_1+fx_3)+gx_1x_3 \, .
\]
Now, for any bivector $\f$ in the line generated by $\f_1, \f_2$ we have $U_{\f}^0=\langle v_5 \rangle$, being $\{v_i\}$ the basis of $W^*$ dual to $\{x_i\}$. We are assuming that $0 \ne \f_3(U_{\f}^0) \subset U_1$, so we must have that $0 \ne \f_3(v_5) \in \langle x_1 \rangle$, i.e. $b=c=0$ and $a \ne 0$ so (rescaling $x_1$ and $x_2$) we can assume $a=-1$ and 
\[
\f_3=x_1x_5+x_4(ex_1+fx_3)+gx_1x_3=x_1(x_5+gx_3)+x_4(ex_1+fx_3)=x_1x'_5+x'_3x_4
\]
where we write $x'_5=x_5+gx_3$, $x'_3=-fx_3-ex_1$. Note that $f \ne 0$ since otherwise $\f_3$ has rank two. With this change, $\f_2=-\frac1f x_1x'_3+x_2x_4$, so we rescale $x'_1=-\frac1f x_1$, and then $x''_5=-f x'_5$ in order to get $\f_3=x'_1x''_5+x'_3x_4$. We also rescale the first generator $\f'_1=-\frac1f \f_1=x'_1x_2$, and we get the desired model.
\end{proof}

\noindent As a scheme, we have $X=\mathrm{Proj}(\kk[x,y,z]/(xz-y^2,yz,z^2))$. As a set, this is the point $\{[1:0:0]\}$. As a variety, we see that its Hilbert function is $h(n)=3$, hence $X$ has dimension $0$ and degree $3$, and it is a triple point. In the affine chart $A=\{x \ne 0\}$ we have 
\[
X|_A=\mathrm{Spec}\left(\kk[y,z]/(z-y^2,yz,z^2)\right) \cong \mathrm{Spec}\left(\kk[y]/(y^3)\right)
\]
hence the cotangent space at $[1:0:0]$ is $\{a y, a \in \kk\}$ and has dimension $1$. We see that $X$ is a triple point with infinitesimal information given by one tangent direction of multiplicity $2$ in the direction of the $y$-axis.

%\textcolor{blue}{La descripción como esquema no me cuadra, me sale lo de arriba! Lo que dices después me cuadra.}

%\textcolor{green}{Es lo mismo: cambiando $\a$ por $x$, $\b$ por $y$, $\g$ por $z$ y simplificando el ideal, sale lo que has puesto. Como son cuentas bastante triviales, creo que es mejor poner directamente $X=\mathrm{Proj}(\kk[x,y,z]/(xz-y^2,yz,z^2))$ y quitar esas ecuaciones tan feas de $X$.}

%\[
%\textcolor{green}{(quitar)} X=\begin{cases}
%\a \g-\b (\b+\g)=0 \\
%\g (\b+\g)=0 \\
%\g^2=0 
%\end{cases} \, .
%\]

\begin{remark}
It is a well-known result (see \cite[II.3.2]{Eisenbud-Harris}) that the two models of a triple point for $X \subset \pi$ from Cases 1 and 2 above are the only two isomorphism classes of a triple point in a plane (over an algebraically closed field).

%%%%%%%%%%%%%%%%%%%%%%%%%%%%%%%%%%%%%%%%%%%%%
%   ISOMORPHISM CLASSES OF TRIPLE POINTS    %
%%%%%%%%%%%%%%%%%%%%%%%%%%%%%%%%%%%%%%%%%%%%%
%The precise statement is:
%let $X=\mathrm{Spec}(\kk[x_1,\dots,x_n]/I) \subset \mathbb{A}^n_{\kk}$ be a closed subscheme with $\dim X=0$, $\deg X=3$. Then $X$ is isomorphic to either $\mathrm{Spec} (\kk [y](y^3))$ or $\mathrm{Spec} (\kk [x,y]/(x^2,xy,y^2))$. 
\end{remark}

\noindent \textbf{Case 3.} $\pi$ has exactly one line $\langle \f_1, \f_2\rangle$ such that for any $\f_2$ generating it, $U_2$ contains $U_1$, and 
for any third generator $\f_3$ it holds that $\f_3(U_2^0)$ is a line not contained in $U_1$. We obtain the model in the following Lemma.

\begin{lemma}\label{lem:case5-3-15}
Assume $X=\{\f_1\}$ and that the bivectors containing $U_1$ form a line, say $\langle \f_1, \f_2 \rangle =\{\f \in \pi \mid U_1\subset U_{\f}\}$. Assume moreover that for any third generator $\f_3$ of $\pi$ we have $\f_3(U^0_2) \cap U_1=0$. Then there are coordinates for $W$, and a choice of generators $\f_1, \f_2, \f_3$ for $\pi$ so that:
\[ 
\left\{\begin{array}{ccl}
 \f_1 & = & x_1x_2\\
 \f_2 & = & x_1x_3+x_2x_4\\
 \f_3 & = & x_1x_4+x_3x_5
\end{array}\right.
\]
\end{lemma}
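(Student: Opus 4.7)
The argument follows the same two-stage template used in Lemmas \ref{lem:case5-3-13} and \ref{lem:case5-3-14}: first put $(\f_1,\f_2)$ into canonical form, then exploit the hypothesis on $\f_3(U_2^0)$ to normalize $\f_3$.

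\emph{Stage 1: canonical form for $(\f_1,\f_2)$.} Since $U_1\subset U_2$, the opening paragraph of the proof of Lemma~\ref{lem:case5-3-13} applies verbatim and yields a basis $x_1,\ldots,x_5$ of $W$ with $\f_1=x_1x_2$ and $\f_2=x_1x_3+x_2x_4$. In particular $U_2=\la x_1,x_2,x_3,x_4\ra$ and $U_2^0=\la v_5\ra\subset W^*$, where $\{v_i\}$ denotes the dual basis.

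\emph{Stage 2: normalizing the $x_5$-part of $\f_3$.} Write $\f_3=\sum_{i<j}a_{ij}x_ix_j$; subtracting $a_{12}\f_1$ we may take $a_{12}=0$. The vector $\f_3(v_5)=-\sum_{i=1}^4 a_{i5}x_i$ is nonzero and, by hypothesis, not contained in $U_1$, so $(a_{35},a_{45})\neq(0,0)$. The simultaneous swap $x_1\leftrightarrow x_2$, $x_3\leftrightarrow x_4$ preserves $\f_1$ and $\f_2$ (up to sign), so we may assume $a_{35}\neq 0$ and rescale $x_5$ to obtain $a_{35}=1$. The change $x_5'=x_5+a_{15}x_1+a_{25}x_2+a_{45}x_4$ is invisible to $\f_1$ and $\f_2$; after renaming $x_5'\mapsto x_5$, it brings $\f_3$ to the form $\f_3=x_3x_5+\xi$ with $\xi\in\L^2\la x_1,x_2,x_3,x_4\ra$.

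\emph{Stage 3: simplifying $\xi$.} Using the remaining freedom---changes of basis of $U_2$ that preserve $\f_1,\f_2$ (e.g.\ the one-parameter family $x_1\mapsto x_1+cx_2$, $x_4\mapsto x_4-cx_3$, and its transpose) together with replacements $\f_3\mapsto\f_3+\lambda\f_1+\mu\f_2$---we bring $\xi$ to the form $x_1x_4$. The hypothesis $X=\pi\cap\GG=\{\f_1\}$ is decisive here: it forbids any $\f_3+\lambda\f_1+\mu\f_2$ from having rank $2$, which rules out the competing normal forms for $\xi$ (for instance $\xi\in\{0,\,x_2x_3,\,x_3x_4\}$ would make $\f_3$ itself split as $x_3\wedge(\,\cdot\,)$, and $\xi=x_2x_4$ would make $\f_3-\f_2$ of rank $2$). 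After this reduction we obtain the claimed model $\f_3=x_1x_4+x_3x_5$.

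The expected main obstacle is Stage~3: one must identify the stabilizer of the ordered pair $(\f_1,\f_2)$ inside $\GL(U_2)$, describe its orbits on $\L^2 U_2/\la \f_1,\f_2\ra$, and check that the rank-$2$ exclusion coming from $\pi\cap\GG=\{\f_1\}$ singles out the orbit whose representative is $x_1x_4$. Once this orbit analysis is settled, putting $\xi$ in the required form is a finite sequence of explicit shears, permutations, and rescalings, exactly as in the preceding lemmas.
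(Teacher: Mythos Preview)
Your Stage 2 contains a concrete error. The substitution $x_5'=x_5+a_{15}x_1+a_{25}x_2+a_{45}x_4$ does \emph{not} reduce $\f_3$ to the form $x_3x_5'+\xi$: the $x_5$-part of $\f_3$ is $(a_{15}x_1+a_{25}x_2+x_3+a_{45}x_4)\wedge x_5$, and replacing $x_5$ by a translate of itself leaves that coefficient vector untouched---you still obtain $(a_{15}x_1+a_{25}x_2+x_3+a_{45}x_4)\wedge x_5'$ plus terms in $\L^2 U_2$. To isolate $x_3x_5$ you would have to redefine $x_3$, but that alters $\f_2$ and unravels Stage~1. So you never actually reach the situation $\f_3=x_3x_5+\xi$ on which Stage~3 is predicated.

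The paper sidesteps this by using a geometric constraint you omit: since the bivectors containing $U_1$ form only the line $\la\f_1,\f_2\ra$, one has $\dim(U_1\cap U_3)=1$. As in Lemma~\ref{lem:case5-3-14}, one first arranges $U_1\cap U_3=\la x_1\ra$ and then $x_3,x_4\in U_3$, so that $U_3=\la x_1,x_3,x_4,x_5\ra$ and $\f_3$ has \emph{no $x_2$-terms whatsoever}. With only six free coefficients,
\[
\f_3=x_1(ax_3+bx_4+cx_5)+x_3(ex_4+fx_5)+gx_4x_5,
\]
the hypothesis $\f_3(v_5)\notin U_1$ reads $(f,g)\neq(0,0)$, and three explicit shears (in $x_5$, then $x_3$ together with $x_2$, then $x_5$ again) bring $\f_3$ to $x_1x_4+x_3x_5$. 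Your proposed Stage~3 orbit analysis on $\L^2 U_2/\la\f_1,\f_2\ra$ is never needed; and in any case you leave it as a plan rather than carry it out.
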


\noindent The model above satisfies the condition: any bivector of the form $\f=\a\f_1+\b\f_2 + \g\f_3$ with $\g \ne 0$ satisfies $\dim U_{\f} \cap U_1=1$, hence the bivectors containing $U_1$ form the line $\la \f_1, \f_2 \ra$. Also, any third generator $\f'_3=\f_3+\a\f_2+\b \f_1$ satisfies $\f'_3(v_5)=x_3 \notin U_{\f_1}$, with $\la v_5 \ra=U_2^0$. As usual $\{v_i\}$ is the basis dual to $\{x_i\}$.
\begin{proof}
Note first $\dim(U_2 \cap U_3)=3$ and $\dim(U_1 \cap U_3)=1$. 
As in the proof of Lemma \ref{lem:case5-3-13} we take initial coordinates so that $\f_1=x_1x_2$ and $\f_2=x_1x_3+x_2x_4$. By the same argument as in the proof of Lemma \ref{lem:case5-3-14}, we arrange that $x_1, x_3, x_4 \in U_3$, so that $U_3=\la x_1,x_3,x_4,x_5\ra$ and
\[
\f_3=x_1(ax_3+bx_4+cx_5)+x_3(ex_4+fx_5)+gx_4x_5 \, .
\]
As $\f_3(U_2^0)=\f_3(\langle v_5 \rangle)=\langle cx_1+fx_3+gx_4 \rangle$, at least one of $f$ or $g$ are non-zero. By permuting the coordinates $x_3, x_4$ if necessary (and also $x_1,x_2$ so that $\f_1$ and $\f_2$ are preserved), we can assume that $f \ne 0$, so we can rescale $x_5$ and assume $f=1$. We make the change $x'_5=ex_4+x_5$, and reset notation so that
\[
\f_3=x_1(ax_3+bx_4+cx_5)+x_3x_5+gx_4x_5=x_1(ax_3+bx_4)+(cx_1+x_3+gx_4)x_5 \, .
\]
Consider the change $x'_3=cx_1+x_3+gx_4$, so 
\[
\f_3=x_1(ax'_3+(b-ag)x_4)+x'_3x_5\quad \tr{and} \quad \f_2=x_1x'_3+(-gx_1+x_2)x_4=x_1x'_3+x'_2x_4
\]
putting $x'_2=x_2-gx_1$. Reset again notation so that $\f_1, \f_2$ remain as we want, and $\f_3$ has the form
\[
\f_3=x_1(ax_3+bx_4)+x_3x_5=bx_1x_4+x_3(-ax_1+x_5)
\]
with $b \ne 0$, so we can assume $b=1$ rescaling $x_1$, $x_2$, $\f_1$ and $\f_2$. Put $x'_5=ax_3+x_4$, so $\f_3=x_1x_4+x_3'x_5$, and we are done.
\end{proof}

\noindent As a scheme, we have $X=\mathrm{Proj}(\kk[x,y,z]/(xz,yz,y^2,z^2))$. As a set, this is the point $\{[1:0:0]\}$. As a variety, we see that its Hilbert function is $h(1)=3$ and $h(n)=2$ for $n\geq 2$, hence $X$ has dimension $0$ and degree $2$, and it is a double point.

\noindent \textbf{Case 4.} The plane $\pi$ does not have any point, other than $\f_1$, that contains $U_1$. This means that $U_{\f} \cap U_1$ has dimension $1$ for any $\f \in \pi$, $\f \ne \f_1$. We start with a preliminary Lemma.

%\noindent \textcolor{red}{Lo único que importa aquí es que la aplicación del Lema \ref{lem:rational-map} sea no constante. ¿Se puede reformular el lema \ref{lem:case16-not-same-line} en un contexto más general? Me parece que aquí hemos fijado un subespacio $Z\subset W$ de dimensión 2 (determinado por $\f_1$); si probamos que para toda recta $r\subset Z$ existe un 4-subespacio $U\subset W$ tal que $U\cap Z=r$, entonces en particular la aplicación es no consante. Esto es cierto: elegimos $x_1$ tal que $\la x_1\ra=r$, completamos a $\{x_1,x_2\}$ base de $Z$ y a $\{x_1,x_3,x_4,x_5\}$ base de $U$. ¿Me equivoco?}

%\textcolor{green}{No creo que sea un resultado tan básico. La relación entre los bivectores $\f$ y los subspacios $U_{\f}$ no es tan directa. Es decir, aquí partimos de un plano $\pi$ de bivectores que cumple ciertas hipótesis, y queremos ver que entonces se tiene
%\[
%\bigcap_{\f \in \pi} U_{\f} = \{0\}
%\]
%Quizás es interesante enunciarlo de esta manera, y poner como corolario que en particular las rectas $U_{\f} \cap U_{\f_1}$ no son todas la misma. De hecho, para $\kk=\RR$ este lema no es verdad, echa un ojo al caso 10bis de la tabla para $\RR$. En el paper para $\kk=\RR$ este Lema me dio muchos dolores de cabeza, hay que meter otra hipótesis extra y ocupa como el doble, es el Lema 19, pág 24 del pdf que te mandé por correo.}

\begin{lemma} \label{lem:case16-not-same-line} 
Suppose $\pi =\la \f_1, \f_2, \f_3 \ra \subset \PP(\L^2 W)=\PP^9$ is a plane such that $\pi \cap \GG=\{[\f_1]\}$. Assume also that dim~$(U_{\f} \cap U_1)=1$ for any $\f \in \pi \setminus \{[\f_1]\}$. Then the lines $U_{\f} \cap U_1$ are not all the same. In other words, $\bigcap_{\f \in \pi} U_{\f}=\{0\}$. 

\noindent In particular, we can choose generators $\f_1, \f_2, \f_3$ so that $U_1=\la x_1, x_2 \ra$, $U_2 \cap U_1=\la x_1 \ra$, and $U_3 \cap U_1=\la x_2 \ra$.
\end{lemma}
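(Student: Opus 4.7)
The strategy is to argue by contradiction. Suppose that all the lines $U_\f\cap U_1$, for $\f\in\pi\setminus\{[\f_1]\}$, coincide in a single line $L\subset U_1$. After rescaling I may take $L=\la x_1\ra$, $U_1=\la x_1,x_2\ra$, and $\f_1=x_1x_2$; completing $x_1$ to a basis $\{x_1,x_3,x_4,x_5\}$ of $U_2$ and setting $W'=\la x_3,x_4,x_5\ra$, I may write
\[
\f_2=x_1\wedge\a_2+\psi_2,\qquad \f_3=x_1\wedge\a_3+x_2\wedge\b_3+\psi_3,
\]
with $\a_i,\b_3\in W'$ and $\psi_i\in\Lambda^2 W'$ (the $x_1x_2$-component of $\f_3$ is killed by subtracting a multiple of $\f_1$). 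Throughout I use the criterion that, for a rank-$4$ bivector $\f$ in the $5$-dimensional $W$, one has $y\in U_\f$ if and only if $\f^2\wedge y=0$ in $\Lambda^5 W$.

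The main computation is to expand $(\a\f_1+\b\f_2+\g\f_3)^2\wedge x_2$ using $\f_1^2=0$, $\Lambda^4 W'=0$ (which kills all $\psi_i\wedge\psi_j$), and the vanishings $\f_1\wedge\f_i\wedge x_2=0$ (since $\f_1=x_1x_2$ already contains $x_2$). After careful sign bookkeeping one obtains
\[
(\a\f_1+\b\f_2+\g\f_3)^2\wedge x_2=-2\,P(\b,\g)\,x_1x_2x_3x_4x_5,
\]
with $P(\b,\g)=C_{22}\b^2+(C_{23}+C_{32})\b\g+C_{33}\g^2$ and scalars $C_{ij}$ defined by $\a_i\wedge\psi_j=C_{ij}\,x_3x_4x_5\in\Lambda^3 W'$. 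Since $\pi\cap\GG=\{[\f_1]\}$ forces $\f_2$ to have rank $4$, the element $\f_2^2=2C_{22}\,x_1x_3x_4x_5$ is non-zero, hence $C_{22}\neq 0$ and $P$ is a non-trivial binary quadratic form. Since $\kk$ is algebraically closed, $P$ has a non-zero root $(\b_0,\g_0)$, and the bivector $\f\coloneqq\b_0\f_2+\g_0\f_3$ lies in $\pi\setminus\{[\f_1]\}$ (because $\f_1\notin\la\f_2,\f_3\ra$) and satisfies $\f^2\wedge x_2=0$, i.e.~$x_2\in U_\f$. Combined with the standing $x_1\in U_\f$, this gives $U_1\subset U_\f$, hence $\dim(U_\f\cap U_1)\geq 2$, contradicting the hypothesis. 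Therefore $\bigcap_{\f\in\pi}U_\f=\{0\}$.

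The ``in particular'' statement is then a routine corollary: the fiber $F_y=\{[\f]\in\pi:y\in U_\f\}$ over each line $\la y\ra\subset U_1$ is a conic in $\pi$ through $[\f_1]$ (cut out by the quadratic $\f^2\wedge y=0$), and the non-constancy established above ensures no $y\in U_1\setminus\{0\}$ has $F_y=\pi$. Thus one may choose $\f_2\in F_{x_1}$ and $\f_3\in F_{x_2}$ with $\{\f_1,\f_2,\f_3\}$ linearly independent (a general-position argument, since collinear triples form a proper sub-family within each conic); after rescaling $\f_1$ to $x_1x_2$ and completing $x_1,x_2$ to a basis of $W$, the stated form is reached. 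I expect the main technical difficulty to be the sign-careful wedge computation producing the binary form $P$; once $C_{22}\neq 0$ is noted, the algebraic closure of $\kk$ concludes the argument with no further work.
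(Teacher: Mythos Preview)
Your proof is correct and follows the same overarching strategy as the paper---argue by contradiction, then show by solving a quadratic (using that $\kk$ is algebraically closed) that some $\f\in\pi\setminus\{[\f_1]\}$ has $x_2\in U_\f$. However, your execution is genuinely cleaner. The paper first simplifies $\f_2$ to the canonical form $x_1x_3+x_4x_5$ through several coordinate changes, then arranges $U_3=\la x_1,x_3,x_4,x_2+x_5\ra$ through further changes, writes out $(\f_3+\a\f_1+\b\f_2)^2$ explicitly in coordinates, and finally identifies a quadratic in $\b$ among the coefficients. You bypass all of this by invoking the criterion ``$y\in U_\f \iff \f^2\wedge y=0$'' for rank-$4$ bivectors in a $5$-dimensional space and observing that, once one kills the $x_1x_2$ component of $\f_3$ and writes $\f_2,\f_3$ relative to the splitting $W=\la x_1,x_2\ra\oplus W'$, the expression $\f^2\wedge x_2$ reduces (via $\L^4 W'=0$ and $\f_1\wedge x_2=0$) to a binary quadratic $P(\b,\g)$ with leading coefficient $C_{22}\ne 0$ coming from $\f_2^2\ne 0$. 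This avoids all the explicit normal-form computations, and the ``in particular'' corollary is handled adequately by your general-position remark (the conics $\{\f^2\wedge y=0\}$ through $[\f_1]$ are proper, so one can pick $\f_2,\f_3$ off a common line). Your approach buys brevity and conceptual clarity; the paper's approach has the advantage of being fully explicit in coordinates, which fits its stated goal of providing constructive reductions throughout.
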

\begin{proof}
Assume otherwise, i.e. that the lines $U_{\f} \cap U_1=\la x_1 \ra$ are all the same.
First we will simplify the expressions for the generators of $\pi$, and then we will derive a contradiction. 

\noindent We choose an initial basis so that $\f_1=x_1x_2$ and $U_2=\la x_1, x_3, x_4, x_5 \ra$. With the usual changes of basis we can arrange so that $\f_2=x_1x_3+x_4x_5$. Now take a third generator $\f_3$. Since $x_1 \in U_3$ and $\dim U_3 \cap U_2=3$, at least one of $x_3,x_4, x_5$ is not in $U_3$. 
Let us assume that $x_4$ or $x_5 \notin U_3$ (in the case that $x_3$ is not in $U_3$ an analogous argument applies). Permuting $x_4, x_5$ if necessary we can assume it is $x_5$. The affine lines $x_3 + \la x_5 \ra$ and $x_4+ \la x_5 \ra$ intersect $U_{\f_3}$, so we can make changes $x'_3=x_3+ax_5$ and $x'_4=x_4+bx_5$ so that $x'_3, x'_4 \in U_{\f_3}$, and 
\[
\f_2=x_1(x'_3-ax_5)+x'_4x_5=x_1x'_3+(x'_4-ax_1)x_5=x_1x'_3+x''_4x_5 \, .
\]
Reset notation, and now we have that $\f_2=x_1x_3+x_4x_5$, $U_{\f_3}=\la x_1, x_3,x_4, x_2+x_5\ra$. A general third generator $\f'_3=\f_3+\a \f_1+\b \f_2$ and its square have the form
\begin{align*}
\f'_3&=x_1((a+\a)x_2+ax_5+(d+\b)x_3)+(cx_2+(c-\b)x_5+fx_3)x_4+ex_1x_4 \\
(\f'_3)^2&=-x_1x_4 
\left( 
\begin{vmatrix}
    a+\a & a \\
    c & c-\b 
\end{vmatrix}  
x_2x_5 + 
\begin{vmatrix}
    a+\a & d+\b \\
    c & f 
\end{vmatrix}  
x_2x_3 + 
\begin{vmatrix}
    a & d+\b \\
    c-\b & f 
\end{vmatrix}  
x_5x_3
\right)
\end{align*}
We get a contradiction if the coefficient of $x_5x_3$ in $(\f'_3)^2$ vanishes for some value of $\b$. In this case, we would have $\la x_1, x_2 \ra =U_{\f_1} \subset U_{\f'_3}$.
This coefficient vanishes for any $\b$ such that $\b^2+(d-c)\b +af-cd=0$, and this has some solution as $\kk$ is algebraically closed.
\end{proof}

\noindent In the next lemma we show how to control the plane $\pi$ in Case 4. Somewhat surprisingly, the key tool is a rational map of degree two.

\begin{lemma} \label{lem:rational-map}
Suppose $\pi =\la \f_1, \f_2, \f_3 \ra$ is a plane as in Lemma \ref{lem:case16-not-same-line}. Then, the map $\PP^1 \to \PP(U_1) \cong \PP^1$ given by
\[
[\a:\b] \mapsto U_{\a\f_2+\b\f_3} \cap U_1
\]
is a rational map of degree 2. With respect to a suitable choice of generators $\f_2, \f_3$ and basis $x_1,x_2$ of $U_1$, the map is given by the matrix:
\[
[\a : \b] \mapsto 
\begin{pmatrix}
    1 & a & 0 \\
    0 & b & 1
\end{pmatrix} 
\begin{pmatrix}
    \a^2 \\
    \a \b \\
    \b^2
\end{pmatrix}
\]
with $ab \ne 1$.
\end{lemma}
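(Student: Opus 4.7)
The strategy is to identify the map intrinsically using Hodge-like duality, which makes its quadratic nature transparent, and then to normalize it using the coordinates provided by Lemma~\ref{lem:case16-not-same-line}.

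Since $\pi\cap\GG=\{[\f_1]\}$ and $\f_1\notin\la\f_2,\f_3\ra$, every bivector $\f=\a\f_2+\b\f_3$ with $[\a:\b]\in\PP^1$ has rank exactly $4$, so its annihilator $U_\f^0\subset W^*$ is one-dimensional. Fixing a volume form on $W$ yields an identification $\L^4 W\cong W^*$, under which the element $v_\f\coloneqq \ast(\f\wedge\f)\in W^*$ generates $U_\f^0$ (indeed, if $\f=y_1 y_2+y_3 y_4$ in a basis adapted to $U_\f$, then $\f\wedge\f=2\,y_1 y_2 y_3 y_4$ pairs trivially with every element of $U_\f=\la y_1,y_2,y_3,y_4\ra$). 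Since $U_\f=(U_\f^0)^0$, we have $U_\f\cap U_1=\ker(v_\f|_{U_1})$, so the map factors as
\[
\PP^1\xrightarrow{[\a:\b]\mapsto [v_\f]}\PP(W^*)\xrightarrow{\text{restriction}}\PP(U_1^*)\xrightarrow{w\mapsto\ker(w)}\PP(U_1)\,.
\]
The first arrow is quadratic in $(\a,\b)$ because $(\a\f_2+\b\f_3)^{\wedge 2}=\a^2\f_2^2+2\a\b\,\f_2\wedge\f_3+\b^2\f_3^2$; the other two arrows are linear and a canonical isomorphism, respectively. Hence the composition is represented by two degree-$2$ homogeneous polynomials in $(\a,\b)$.

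Using Lemma~\ref{lem:case16-not-same-line}, fix coordinates $x_1,\dots,x_5$ of $W$ with $U_1=\la x_1,x_2\ra$, $U_2\cap U_1=\la x_1\ra$ and $U_3\cap U_1=\la x_2\ra$, and identify $\PP(U_1)\cong\PP^1$ via this basis. The boundary values $[\a:\b]=[1:0],[0:1]$ correspond to $\f_2,\f_3$ and map to $[1:0],[0:1]\in\PP(U_1)$, respectively. Writing the map as $[\a:\b]\mapsto [P(\a,\b):Q(\a,\b)]$ with $P,Q$ homogeneous quadratic, these boundary conditions force the $\b^2$-coefficient of $P$ and the $\a^2$-coefficient of $Q$ to vanish, giving
\[
P(\a,\b)=p_1\a^2+p_2\a\b,\qquad Q(\a,\b)=q_2\a\b+q_3\b^2,\qquad p_1,q_3\neq 0\,.
\]
The rescalings $\f_2\to\l\f_2,\;\f_3\to\m\f_3,\;x_1\to\s x_1,\;x_2\to\t x_2$ preserve the structural conditions, and suitable choices absorb $p_1$ and $q_3$ into $1$, producing the matrix form stated in the lemma with $a=p_2$, $b=q_2$.

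Finally, by the Case~$4$ hypothesis no bivector in $\la\f_2,\f_3\ra\setminus\{0\}$ contains $U_1$ in its associated subspace, so $v_\f|_{U_1}\neq 0$ for every $[\a:\b]\in\PP^1$; equivalently, $P$ and $Q$ have no common zero in $\PP^1$ and are therefore coprime. Since $P=\a(\a+a\b)$ and $Q=\b(b\a+\b)$ share the linear factor $b\a+\b$ precisely when $ab=1$ (in which case $a(b\a+\b)=\a+a\b$), this forces $ab\neq 1$ and confirms that the map has genuine degree $2$. The main obstacle is recognizing that $U_\f^0$ depends \emph{quadratically} on $\f$ through $\f\wedge\f$; once this identification is in place, both the normalization and the non-degeneracy condition $ab\neq 1$ follow by elementary manipulations.
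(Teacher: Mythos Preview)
Your proof is correct and takes a genuinely different route from the paper. The paper works entirely in explicit coordinates: it first reduces $\f_2$ to $x_1x_3+x_4x_5$, then simplifies $\f_3$ through a long sequence of basis changes until it reaches the preliminary model $\f_3=(ax_2+x_4)x_3+(x_2+bx_4)x_5$ with $ab\neq 1$; only then does it compute $U_{\a\f_2+\b\f_3}\cap U_1$ by listing generators of $U_{\a\f_2+\b\f_3}$ and eliminating $x_4$ to extract the component in $U_1$. Your approach is more conceptual: the identification of $U_\f^0$ with $\kk\cdot\ast(\f\wedge\f)$ via the isomorphism $\L^4 W\cong W^*$ makes the quadratic dependence on $(\a,\b)$ immediate, and the normalization then follows from the two boundary values $[1:0]\mapsto[1:0]$, $[0:1]\mapsto[0:1]$ provided by Lemma~\ref{lem:case16-not-same-line}, together with rescalings, without ever simplifying $\f_2$ or $\f_3$ further. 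The coprimality argument for $ab\neq 1$ is likewise intrinsic, whereas in the paper it falls out from the rank-$4$ condition on the explicit $\f_3$. Your route is shorter and essentially coordinate-free; the paper's computation, however, produces the explicit model~\eqref{eq:model16-preliminary} as a by-product, which it then reuses verbatim in the proof of Proposition~\ref{prop:case5-3-16}.
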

\begin{proof}
By the previous lemma we know the above map is non-constant. We can choose a basis $\{x_i\}$ and generators $\f_1, \f_2, \f_3$ for $\pi$ so that $U_1=\la x_1, x_2 \ra$, $U_2=\la x_1, x_3,x_4,x_5 \ra$ and $\la x_2 \ra= U_1 \cap U_3$. Then 
\[
\f_2=x_1(ax_3+bx_4+cx_5)+x_3(ex_4+fx_5)+gx_4x_5\,,
\]
and at least one of $a, b,c$ is non-zero. We can assume (swapping coordinates maybe) that $a \ne 0$, and rescaling $x_3$ we get $a=1$. Make the change $x'_3=x_3+bx_4+cx_5$, then $\f_2=x_1x'_3+x'_3(ex_4+fx_5)+g'x_4x_5$, with $g' \ne 0$, so rescaling $x_5$ we get $g'=1$. Reset notation, so $\f_2=x_1x_3+x_4(x_5-ex_3)+fx_3x_5$. Make the change $x'_5=x_5-ex_3$ so $\f_2=x_1x_3+x_4x'_5+fx_3x'_5$, reset notation so $\f_2=x_1x_3+(x_4+fx_3)x_5$, and put $x'_4=x_4+fx_3$. 

\item We start with $\f_1=x_1x_2$, $\f_2=x_1x_3+x_4x_5$ and $U_1\cap U_3=\la x_2 \ra$. For $j=3,4,5$ the affine line $x_j + \la x_1 \ra$ intersects $U_3$ in a point, hence we find $a_j \in \kk$ so that $x_j+a_jx_1 \in U_3$. In other words, through the change $x'_j=x_j+a_jx_1$, we can assume that $x_j \in U_3$, for $j=3,4,5$. Make these changes, and $\f_2=x_1x'_3+(x'_4-a_4x_1)(x'_5-a_5x_1)=x_1(x'_3+a_5x'_4\textcolor{red}{-}a_4x'_5)+x'_4x'_5$, so by a further change $x''_3=x'_3+a_5x'_4-a_4x'_5 \in U_3$, followed by relabeling, we get $\f_2=x_1x_3+x_4x_5$ and $U_3=\la x_2,x_3,x_4, x_5 \ra$. We have arrived to the following:
\[
\begin{cases}
    \f_1=x_1x_2 \\
    \f_2=x_1x_3+x_4x_5 \\
    \f_3=x_2(ax_3+bx_4+cx_5)+x_3(ex_4+fx_5)+gx_4x_5
\end{cases}
\]
If $b=c=0$ then $\f_3=(ax_2-dx_4-ex_5)x_3+fx_4x_5$ and $\f_3-f\f_2$ would have rank $2$, a contradiction. Hence $b$ or $c \ne 0$. Swapping the coordinates $x_4$ and $x_5$ (and maybe changing the sign of $\f_2$) we can assume that $c \ne 0$, so rescaling $x'_5=cx_5$, $x'_4=\frac1c x_4$ we can assume that $c=1$, and make the change $x'_5=x_5+bx_4$, so we can assume that $\f_3=x_2(ax_3+x_5)+x_3(ex_4+fx_5)+gx_4x_5$.

\item We claim now that $e\ne 0$. If it were $e=0$ then $\f_3=ax_2x_3+(x_2+fx_3+gx_4)x_5$, with $g \ne 0$, for otherwise the rank would drop. Then, the 4-space associated to the bivector $g\f_2-\f_3$ contains $U_1$, a contradiction.
%\textcolor{red}{(since otherwise $x_4 \notin U_{\f_3}$). A simple computation shows that
%\[
%U_{\a \b}\coloneqq U_{\a \f_2+\b \f_3}=
%\begin{cases}
%    \a x_3 \\
%    a \b x_3+\b x_5 \\
%    a \b x_2+\a x_1-\b e x_5 \\
%    (\a+f\b)x_5 \\
%    (\a+f\b)x_4+\b x_2+\b e x_3
%\end{cases}
%\]
%\[
%U_{\a \b}\coloneqq U_{\a \f_2+\b \f_3}=\la\a x_3,a \b x_3+\b x_5,a \b x_2+\a x_1-\b f x_5,(\a+g\b)x_5,(\a+g\b)x_4+\b x_2+\b f x_3\ra
%\]
%For $a\b\ne 0$ this space is generated by $x_3, x_5, a \b x_2+\a x_1, (\a+g \b)x_4+\b x_2$. If we choose $(\a_0:\b_0)$ with $\a_0 + g \b_0=0$, i.e. $\b_0=1, \a_0=-f \ne 0$, then $x_1, x_2 \in U_{\a_0 \b_0}$, a contradiction.} 
We conclude that $e \ne 0$, so we can assume $e=1$ by rescaling $x'_4=ex_4$, $x'_1=ex_1$, and we get 
\[
\f_3=x_2(ax_3+x_5)+x_3(x_4+fx_5)+gx_4x_5=x_2(ax_3+x_5)-x_3x'_4+gx'_4x_5
\]
with a further change $x'_4=-(x_4+fx_5)$, $x'_5=-x_5$. Hence we arrive at a considerably simplified model
\begin{equation} \label{eq:model16-preliminary}
 \begin{cases}
    \f_1=x_1x_2 \\
    \f_2=x_1x_3+x_4x_5 \\
    \f_3=(ax_2+x_4)x_3+(x_2+bx_4)x_5
\end{cases}   
\end{equation}
where we have relabeled the constants $a,b$, with $ab \ne 1$ since $\f_3$ has rank $4$. Let us compute the map $[\a:\b] \mapsto U_{\a \b } \cap U_1$, with $U_{\a \b}\coloneqq U_{\a \f_2+ \b \f_3}$. We have $\a \f_2+\b \f_3=(\a x_1+a\b x_2+\b x_4)x_3+(\b x_2+(\a+b\b)x_4)x_5$, so it follows easily that
\[
U_{\a \b}=\la\a x_3,\b ax_3+\b x_5,\a x_1+\b a x_2+\b x_4,\b x_3 + (\a + b \b) x_5,\b x_2+(\a+b \b) x_4\ra
\]
and for $\a\b\ne 0$ this is generated by $x_3$,$x_5$, $y_1=\a x_1+\b ax_2+\b x_4$, and $y_2=\b x_2+(\a+b \b)x_4$. We eliminate $x_4$ by considering 
\[
(\a +b \b)y_1-\b y_2=(\a^2+b \a \b)x_1+(\tfrac{a}{ab-1} \a \b +\b^2) (ab-1)x_2
\]
with $ab-1 \ne 0$. With respect to the basis $x'_1=x_1$, and $x'_2=(ab-1)x_2$ of $U_1$, we have obtained the degree two rational map 
\[
(\a: \b) \mapsto 
\begin{pmatrix}
    \a^2+b\a \b \\
    \tfrac{a}{ab-1} \a \b+\b^2
\end{pmatrix}
=
\begin{pmatrix}
    1 & b & 0 \\
    0 & \tfrac{a}{ab-1} & 1
\end{pmatrix} 
\begin{pmatrix}
    \a^2 \\
    \a \b \\
    \b^2
\end{pmatrix}
\]
with $\tfrac{ab}{ab-1} \ne 1$, as desired.
\end{proof}

\noindent We shall use the rational maps from Lemma \ref{lem:rational-map} to get simplified generators for $\pi$ in the following manner. Fix a basis $x_1, x_2$ for the plane $U_1$ and take some generators $\f_2$, $\f_3$ of rank 4. The rational maps $[\a:\b] \mapsto U_{\a\f_2+\b \f_3} \cap U_1$ are well defined up to:
\begin{itemize}
    \item a change of the rank-4 generators $\f_2, \f_3$ of type
    \[
    \begin{pmatrix}
        \f_2' \\
        \f_3'
    \end{pmatrix} 
    =
    \begin{pmatrix}
        a & b \\
        c & d
    \end{pmatrix}
    \begin{pmatrix}
        \f_2 \\
        \f_3
    \end{pmatrix}
    \]
    which induces a change in the parameters $\a, \b$ so that $\a \f_2+\b \f_3=\a' \f_2' + \b' \f_3'$;
    \medskip
    \item a change of basis of $U_{\f}=\la x_1, x_2 \ra$. 
\end{itemize}  
In other words, the equivalence class of the rational maps from Lemma \ref{lem:rational-map} modulo these changes of bases is an invariant of the minimal algebra. It is easy to classify these equivalence classes in our (algebraically closed) field $\kk$.

\begin{lemma}\label{lem:rational-maps-equivalence-classes}
Let $\kk$ be algebraically closed. Let $U$, $V$ be 2-dimensional $\kk$-vector spaces. Any degree-2 rational map $\PP^1_{\kk}\cong \PP(U) \to \PP(V) \cong \PP^1_{\kk}$ of type
\[
[\a : \b] \mapsto 
\begin{pmatrix}
    1 & a & 0 \\
    0 & b & 1
\end{pmatrix} 
\begin{pmatrix}
    \a^2 \\
    \a \b \\
    \b^2
\end{pmatrix} , \quad
\text{ such that }
ab \ne 1.
\]
is equivalent, up to linear change of coordinates in both $U$ and $V$, to the map $[\a:\b] \to [\a^2: \b^2]$.
\end{lemma}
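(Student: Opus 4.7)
The plan is to interpret $\phi\colon\PP(U)\to\PP(V)$ as a degree-$2$ morphism of $\PP^1$'s and to exploit the classical fact that, in characteristic zero, such a morphism is determined up to pre- and post-composition by automorphisms of $\PP^1$ by its ramification data. By the Riemann--Hurwitz formula, $\phi$ has exactly two ramification points $P_1,P_2\in\PP(U)$, with two corresponding branch points $Q_i=\phi(P_i)\in\PP(V)$. Since the equivalence allowed by the lemma is precisely the action of $\GL(U)\times\GL(V)$, descending to $\PGL(U)\times\PGL(V)$, the $3$-transitivity of $\PGL_2$ lets one simultaneously move the $P_i$'s to $\{[1:0],[0:1]\}$ in $\PP(U)$ and the $Q_i$'s to $\{[1:0],[0:1]\}$ in $\PP(V)$, with $P_i\mapsto Q_i$.

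Concretely, setting $f_0=\a^2+a\a\b$ and $f_1=b\a\b+\b^2$, a common projective zero of $(f_0,f_1)$ requires a non-trivial solution of the system $\a+a\b=0=b\a+\b$, which exists iff $\det\begin{pmatrix}1&a\\b&1\end{pmatrix}=1-ab=0$. Hence the hypothesis $ab\ne 1$ ensures $\phi$ is a morphism. The Jacobian is
\[
(2\a+a\b)(2\b+b\a)-(a\a)(b\b)=2\bigl(b\a^2+2\a\b+a\b^2\bigr),
\]
whose quadratic factor has discriminant $4(1-ab)\ne 0$, so there are two distinct zeros in $\PP^1$. These yield two simple ramification points $P_1,P_2$, each with ramification index $2$; in particular $\phi^{-1}(Q_i)=\{P_i\}$ scheme-theoretically, so if $Q_1=Q_2$ the degree of $\phi$ would be at least $4$, a contradiction.

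After the coordinate changes described above, $\phi=[g_0:g_1]$ with $g_0,g_1$ homogeneous of degree $2$, and the divisor identities $\phi^{-1}([1:0])=2[1:0]$ and $\phi^{-1}([0:1])=2[0:1]$ force $g_1=c_1\b^2$ and $g_0=c_0\a^2$ for some non-zero $c_0,c_1\in\kk$. The final rescaling of coordinates on $V$ given by $[x:y]\mapsto[c_0^{-1}x:c_1^{-1}y]$ brings $\phi$ into the sought form $[\a:\b]\mapsto[\a^2:\b^2]$.

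There is no real obstacle beyond the Jacobian computation: the content amounts to the classical statement that, over an algebraically closed field of characteristic zero, the unique degree-$2$ self-map of $\PP^1$ up to $\PGL_2\times\PGL_2$ is the squaring map.
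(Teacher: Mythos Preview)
Your proof is correct and takes a genuinely different route from the paper's. The paper proceeds by an explicit sequence of coordinate changes: starting from the given map, it successively substitutes $\b'=a\b+\a$, then $\a''=\sqrt{1-ab}\,\a'$, and finally $u'=v+2u$, $v'=v-2u$ to reach $[\a^2:\b^2]$ by hand. Your argument instead identifies the two ramification points of the degree-$2$ cover via the Jacobian (whose discriminant $4(1-ab)$ you correctly compute), observes that the branch points are distinct by a degree count, and then invokes the $2$-transitivity of $\PGL_2$ on source and target separately to normalise both pairs to $\{[1:0],[0:1]\}$.

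Your approach is cleaner and makes transparent \emph{why} the condition $ab\ne 1$ is exactly what is needed: it guarantees both that $\phi$ is a morphism and that the ramification divisor is reduced. The paper's computational approach, by contrast, produces explicit formulas for the coordinate changes; this is not idle, since in the subsequent Proposition the authors actually carry these explicit changes through to the bivector model of $\pi$ in order to reach the canonical form $\f_2=x_1x_3+x_4x_5$, $\f_3=x_3x_4+x_2x_5$. Your argument certifies that such changes exist but does not hand them over, so a reader following your proof would still need to solve for the coordinate changes if they wanted to reproduce the later reduction. As a self-contained proof of the lemma, however, yours is entirely adequate and arguably more illuminating.
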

\begin{proof}
We denote such a rational map as
\[
\begin{cases}
    u=\a^2 + a \a \b  \\
    v=b \a \b + \b^2
\end{cases}
\]
and note that if $a=b=0$ there is nothing to prove. Let us assume $a \ne 0$ (the case $b \ne 0$ is analogous). Make the change $\a'=\a$, $\b'=a \b + \a$, so that
\[
\begin{cases}
    u'=u=\a' \b'  \\
    v'=a^2v= \b'^2+(1-ba) \a'^2+(ba-2)\a' \b'
\end{cases}
\implies 
\begin{cases}
    u''=u'=\a' \b'  \\
    v''=v'-(ba-2)u'= \b'^2+(1-ba) \a'^2
\end{cases}
\]
so we make the change:
\[
\begin{cases}
    \a''=\sqrt{1-ba} \, \a'  \\
    \b''=\b'
\end{cases} 
\, , \, 
\begin{cases}
    u'''=\sqrt{1-ba} \, u'' \\
     v'''=v''
\end{cases}
\]
and, if we reset notation $\a''=\a$, $u'''=u$, etc, we get to
\[
\begin{cases}
    u=\a \b  \\
    v=\a^2 +  \b^2
\end{cases}
\]
and then we are finished by putting
\[
\begin{cases}
    u'=v+2u=(\a + \b)^2=\a'^2  \\
    v'=v-2u=(\a-\b)^2=\b'^2 \, .
\end{cases}
\]
\end{proof}

\noindent Using Lemma \ref{lem:rational-maps-equivalence-classes} we can tackle our object of interest. 

\begin{proposition}\label{prop:case5-3-16}
Assume $\pi \cap \GG=\{\f_1\}$ and that for any $\f \in \pi \setminus \{\f_1\}$, $U_{\f} \cap U_1$ has dimension $1$. Then there are coordinates for $W$, and a choice of generators $\f_1, \f_2, \f_3$ for $\pi$ so that:
\[ 
\left\{\begin{array}{ccl}
 \f_1 & = & x_1x_2\\
 \f_2 & = & x_1x_3+x_4x_5\\
 \f_3 & = & x_3x_4+x_2x_5
\end{array}\right.
\]
\end{proposition}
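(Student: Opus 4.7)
The plan is to combine Lemmas \ref{lem:rational-map} and \ref{lem:rational-maps-equivalence-classes} to normalize the preliminary form of $\pi$. First, by Lemma \ref{lem:rational-map}, one may choose a basis of $W$ and generators of $\pi$ so that $\f_1 = x_1 x_2$, $\f_2 = x_1 x_3 + x_4 x_5$, and $\f_3 = (a x_2 + x_4) x_3 + (x_2 + b x_4) x_5$, with $ab \neq 1$. The associated invariant rational map $[\a:\b]\mapsto U_{\a\f_2+\b\f_3}\cap U_1$, computed in the basis $\{x_1,(ab-1)x_2\}$ of $U_1$, depends quadratically on the parameters $(a,b)$, and I will aim to reduce $(a,b)$ to $(0,0)$.

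Next, I will invoke Lemma \ref{lem:rational-maps-equivalence-classes} to bring this rational map to the standard form $[\a:\b]\mapsto[\a^2:\b^2]$ by a linear change of the source coordinates $(\a,\b)$ (which amounts to replacing $\f_2, \f_3$ by new generators of $\pi$) together with a linear change of basis of the target $U_1$ (which amounts to a change of $x_1, x_2$). Implementing the change of basis of $U_1$ at the level of $W$ generically disrupts $\f_2 = x_1 x_3 + x_4 x_5$, so I will compensate by simultaneously adjusting $x_3, x_4, x_5$ (and re-choosing the generators of $\pi$) to keep $\f_1$ and $\f_2$ in their canonical forms throughout. After these interlocking substitutions, the model reduces to $a = b = 0$, giving $\f_3 = x_4 x_3 + x_2 x_5 = -x_3 x_4 + x_2 x_5$. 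A final sign change $x_2 \mapsto -x_2$, together with rescaling $\f_1 \mapsto -\f_1$ and $\f_3 \mapsto -\f_3$, delivers the stated model $\f_3 = x_3 x_4 + x_2 x_5$.

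As a consistency check I will verify directly that the proposed model satisfies the hypotheses. A straightforward calculation, using the formula for $\f(u_k,\cdot)$ recalled after the rank lemma, shows that $U_{\a\f_2+\b\f_3}\cap U_1 = \langle \a^2 x_1 + \b^2 x_2\rangle$, which is one-dimensional for every $(\a,\b)\neq(0,0)$, collapses to zero only in the direction of $\f_1$, and realizes the standard rational map $[\a:\b]\mapsto[\a^2:\b^2]$. This confirms that $\pi\cap\GG=\{[\f_1]\}$ and that $\dim(U_\f\cap U_1)=1$ for every $\f\in\pi\setminus\{\f_1\}$, matching the hypotheses of the proposition and showing that the model belongs to the class being studied.

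The main obstacle will be the explicit implementation in $W$ of the coordinate changes provided abstractly by Lemma \ref{lem:rational-maps-equivalence-classes}. That equivalence is formulated purely in terms of source and target of the rational map, but translating each such step into concrete changes of basis in the five-dimensional $W$, while preserving the canonical expressions of $\f_1$ and $\f_2$, requires carefully composing several substitutions and tracking how each one propagates to $\f_3$. It is this bookkeeping, rather than any deeper new idea, that constitutes the technical heart of the argument.
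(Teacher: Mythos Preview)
Your proposal is correct and follows essentially the same route as the paper: start from the preliminary model of Lemma~\ref{lem:rational-map}, normalize the associated degree-$2$ rational map via Lemma~\ref{lem:rational-maps-equivalence-classes}, and lift the source/target coordinate changes to explicit changes of basis in $W$. The paper executes exactly the ``bookkeeping'' you flag as the remaining work, splitting into the four cases $a=b=0$; $b=0,\,a\neq0$; $a=0,\,b\neq0$; and $ab\neq0$, and carrying out the (in the last case rather intricate) substitutions explicitly.
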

\begin{proof}
In equation \eqref{eq:model16-preliminary} we showed that in a suitable basis we have
\[
\begin{cases}
    \f_1=x_1x_2 \\
    \f_2=x_1x_3+x_4x_5 \\
    \f_3=(ax_2+x_4)x_3+(x_2+bx_4)x_5
\end{cases}
\]
with $ab \ne 1$, since $\f_3$ has rank $4$. We are going to compute the rational map associated to this model, and transform this map to the canonical form $[\a^2:\b^2]$. The changes of basis necessary for doing this will lead us in the right direction to get our desired model.

\noindent \textbf{Case 1.} If $a=b=0$, by changing $x'_2=-x_2$, $x'_4=-x_4$, $x'_5=-x_5$ we get to promised model.

\medskip 

\noindent \textbf{Case 2.} If $b=0$, $a \ne 0$, by rescaling $x'_2=a x_2$, $x'_5=\frac1ax_5$, $x'_1=\frac1ax_1$ we can assume $a=1$, so we have 
\[
\begin{cases}
    \f_1=x_1x_2 \\
    \f_2=x_1x_3+x_4x_5 \\
    \f_3=(x_2+x_4)x_3+x_2x_5 \, .
\end{cases} 
\] 
We compute the rational map $U_{\a \f_2 + \b \f_3 } \cap U_1$. We get easily that $U_{\a \f_2 + \b \f_3 }=\la x_3,x_5,\a x_1+\b x_2+\b x_4,\b x_2+\a x_4\ra$. We eliminate $x_4$ from the third and fourth vectors and get the map:
\[
[\a:\b] \mapsto \a^2 x_1 + (\a \b -\b^2)x_2 = ux_1+vx_2 
\iff \begin{cases}
    u=\a^2 \\
    v=\a \b -\b^2
\end{cases}
\]
An easy computation shows that this map is equivalent to the rational map of giving the desired model:
\[
\begin{cases}
    u'=\a'^2 \\
    v'=\b'^2
\end{cases} \text{ with } 
\begin{cases}
    \a'=\a \\
    \b'=\a-2\b
\end{cases} \text{, }
\begin{cases}
    u'=u \\
    v'=u-4v
\end{cases}
\]
so the generators $\f'_2$ and $\f'_3$ are given by 
$\a \f_2+\b \f_3=\a' \f_2 + \tfrac{1}{2}(\a'-\b') \f_3= \a'(\f_2+\tfrac12 \f_3)-\tfrac12 \b' \f_3$,
which gives
\[
\begin{cases}
\f'_2= \f_2+\frac12 \f_3=x_1x_3+x_4x_5+\frac12 (x_2+x_4)x_3+\frac12 x_2x_5 \\
\f'_3=-\frac12 \f_3=-\frac12 (x_2+x_4) x_3-\frac12 x_2x_5
\end{cases}
\]and the change of basis in $U_1$ is given by $ux_1+vx_2=u'x_1+\tfrac14(u'-v')x_2=u'(x_1+\tfrac14 x_2)-\tfrac14 v'x_2$, which gives
\[
\begin{cases}
x'_1= x_1+\tfrac14 x_2 \\
x'_2=-\tfrac14 x_2
\end{cases} 
\iff
\begin{cases}
x_1= x'_1+ x'_2 \\
x_2=-4 x'_2
\end{cases} 
\]
Plugging these into $\f'_2$ and $\f'_3$ we get
\[
\begin{cases}
\f'_2= (x'_1+x'_2)x_3+x_4x_5+ (\frac12 x_4-2x'_2)x_3-2 x'_2x_5 =x'_1x_3 + (\frac12 x_4-x'_2)(x_3+2x_5)\\
\f'_3=(2x'_2-\frac12 x_4) x_3 + 2 x'_2x_5
\end{cases}
\]
so we introduce the change
\[
\begin{cases}
    x'_3=x_3 \\
    x'_4=\tfrac12 x_4 -x'_2 \\
    x'_5=x_3+2x_5
\end{cases} \text{ and we get:  }
\begin{cases}
\f'_2 =x'_1x'_3 + x'_4x'_5\\
\f'_3=(x'_2-x'_4) x_3 + 2 x'_2x_5=x'_2 x'_5 + x'_3x'_4
\end{cases}
\]
and this is the desired model. Note that $\f_1=x_1x_2$ is proportional to $x'_1x'_2$, and this is always ensured since the change of coordinates satisfies $\la x_1, x_2 \ra=\la x'_1, x'_2 \ra$ by construction.

%%%%%%%%%%%%%%%%%%%%%%%%%%%%%%%%%%%%%%%%%%%%%%%%%%%%%%
% NO QUIERO BORRAR ESTE COMENTARIO, ME HA ENCANTADO!!%
%%%%%%%%%%%%%%%%%%%%%%%%%%%%%%%%%%%%%%%%%%%%%%%%%%%%%%

%\textcolor{green}{genial, mañana te tocarán las cuentas hardcore. Como anécdota, en el caso 4, el más difícil, ya cuando estaba perdiendo la fe en que me saliera después de un par de intentos de días enteros echando cuentas, me terminó de salir viendo el quinto set de la final de roland garros jeje}

\medskip

\noindent \textbf{Case 3.} If $b \ne 0$, $a=0$, by rescaling $x'_4=b x_4$, $x'_3=\frac1bx_3$, $x'_1=b^2x_1$ we can assume $b=1$, so we have 
\[
\begin{cases}
    \f_1=x_1x_2 \\
    \f_2=x_1x_3+x_4x_5 \\
    \f_3=x_4x_3+(x_2+x_4)x_5 \, .
\end{cases} 
\] 
An analogous computation as before yields a rational map $[\a: \b] \mapsto u x_1 + v x_2$ with
\[
\begin{cases}
    u= (\a+\b)\a \\
    v=-\b^2
\end{cases} 
\iff 
\begin{cases}
    u'=4u-v= (2\a+\b)^2=\a'^2 \\
    v'=-v=\b^2=\b'^2
\end{cases}
\]
so the right generators $\f'_2, \f'_3$ and basis $x'_1, x'_2$ are
\[
\begin{cases}
    \f'_2= \tfrac12 \f_2=\tfrac12 x_1x_3+\tfrac12 x_4x_5 \\
    \f'_3=-\tfrac12 \f_2+\f_3=(x_4-\tfrac12 x_1)x_3+(x_2+\tfrac12 x_4)x_5
\end{cases} \, ; \,
\begin{cases}
    x'_1= \tfrac14 x_1 \\
    x'_2=-\tfrac14 x_1-x_2
\end{cases}
\]
introducing the new coordinates $x'_1$, $x'_2$ in $\f'_2$, $\f'_3$ we get
\[
\begin{cases}
    \f'_2= x'_1(2x_3+x_5)+\tfrac12 (x_4-2x'_1)x_5=2x'_1x'_3+\tfrac12 x'_4x'_5 \\
    \f'_3=(x_4-2x'_1)(x_3+\tfrac12 x_5)-x'_2x_5=x'_4x'_3-x'_2x'_5
\end{cases}
\]
with the further change $x'_3=x_3+\tfrac12 x_5$, $x'_4=x_4-2x'_1$, $x'_5=x_5$.
Now it only remains to rescale $\f''_3=-\f'_3$ and $x'_1=\frac14 x''_1$ to get the desired model.

\medskip

\noindent \textbf{Case 4.} $ab\ne 0$. By rescaling $x'_2=a x_2$, $x'_5=\frac1a x_5$, $x'_1=\frac1ax_1$ we can assume $a=1$, so we have
\[
\begin{cases}
    \f_1=x_1x_2 \\
    \f_2=x_1x_3+x_4x_5 \\
    \f_3=(x_2+x_4)x_3+(x_2+bx_4)x_5
\end{cases}
\]
with $0 \ne b \ne 1$. Obviously, in this case we cannot rescale also $b$. The rational map is $ux_1+vx_2$ with 
\[
\begin{cases}
    u=\a (\a + b \b) \\
    v=\b(\a+(b-1)\b)
\end{cases}
\]
which is equivalent to $[u':v']=[\a'^2:\b'^2]$ with the change
\[
\begin{cases}
    u'=(1-2h+2 \ii \sqrt{h}\sqrt{1-h})u + b v \\
    v'=(1-2h-2 \ii \sqrt{h}\sqrt{1-h})u + b v
\end{cases} 
; \quad
\begin{cases}
    \a'=( \ii \sqrt{h} + \sqrt{1-h})\a + b\sqrt{1-h} \, \b \\
    \b'=( \ii \sqrt{h} - \sqrt{1-h})\a - b\sqrt{1-h} \, \b
\end{cases} 
\]
where we have denoted $h=\frac1b$, and $\ii=\sqrt{-1} \in \kk$ a choice for square root of $-1$. We need the inverse change, and this is given by
\[
\begin{cases}
    u=\tfrac{-\ii \sqrt{b}}{4 \sqrt{1-h}}(u'-v')  \\ 
    v=\left(\frac h2 + \ii \tfrac{(1-2h)\sqrt{h}}{4 \sqrt{1-h}}\right)u' + \left(\frac h2 - \ii \tfrac{(1-2h)\sqrt{h}}{4 \sqrt{1-h}}\right) v'
\end{cases} 
; \quad
\begin{cases}
    \a=\tfrac{-\ii}{2} \sqrt{b}(\a'+\b') \\
    \b=(\frac{h}{2\sqrt{1-h}} + \frac{\ii}{2} \sqrt{h})\a' + (\frac{-h}{2\sqrt{1-h}} + \frac{\ii}{2} \sqrt{h}) \b'
\end{cases} 
\]
From the relation $\a \f_2+ \b \f_3= \a' \f'_2 + \b' \f'_3$ we obtain the right second and third generators:
\[
\begin{cases}
    \f'_2=-\tfrac{\ii}{2}  \sqrt{b} \, \f_2 + \left(\frac{h}{2\sqrt{1-h}} + \frac{\ii}{2} \sqrt{h}\right) \f_3
    \\
    \f'_3=-\tfrac{\ii}{2}  \sqrt{b} \, \f_2 + \left(-\frac{h}{2\sqrt{1-h}} + \frac{\ii}{2} \sqrt{h} \right) \f_3
\end{cases} 
\]
%\textcolor{green}{Cierto, borrar lo rojo}
%\textcolor{red}{Esto se puede deducir directamente a partir de la relación que da $u'$ y $v'$ a partir de $u$ y $v$:and from the relation $ux_1+vx_2=u'x'_1+v'x'_2$ we obtain the right basis for $U_1$:
%\[
%\begin{cases}
%    x'_1=-\tfrac{\ii}{4} \frac{\sqrt{b}}{\sqrt{1-h}} x_1 + \left( \frac{h}{2} + \frac{\ii}{4} \frac{(1-2h) \sqrt{h}}{\sqrt{1-h}} \right) x_2
%    \\
%    x'_2=\tfrac{\ii}{4} \frac{\sqrt{b}}{\sqrt{1-h}} x_1 + \left( \frac{h}{2} - \frac{\ii}{4} \frac{(1-2h) \sqrt{h}}{\sqrt{1-h}} \right) x_2
%\end{cases}
%\]
%}
and from the relation $ux_1+vx_2=u'x'_1+v'x'_2$, substituting $u',v'$ in terms of $u,v$, we get
\[
\begin{cases}
    x_1=(1-2h + 2 \ii \sqrt{1-h}\sqrt{h}) x'_1 + ( 1-2h-2 \ii \sqrt{1-h}\sqrt{h}) x'_2
    \\
    x_2=b x'_1 + bx'_2
\end{cases} 
\]
Now we substitute the expression of $\f_2$, $\f_3$ to get $\f'_2, \f'_3$ in terms of the $x_1,\dots , x_5$ basis, and then substitute $x_1, x_2$ in terms of $x'_1, x'_2$, and we obtain
\begin{align*}
    \f'_2 & =\left( (\sqrt{1-h} + \ii \sqrt{h} + \tfrac{1}{2 \sqrt{1-h}}) x'_1 + (-\sqrt{1-h} + \ii \sqrt{h} + \tfrac{1}{2 \sqrt{1-h}})x'_2 + (\tfrac{\ii}{2} \sqrt{h} + \tfrac{h}{2 \sqrt{1-h}})x_4   \right) x_3
    \\
    & + \left((\tfrac{\ii}{2} \sqrt{b} + \tfrac{1}{2 \sqrt{1-h}})(x'_1+x'_2) + \tfrac{1}{2 \sqrt{1-h}} x_4 \right) x_5 \\
    \f'_3 & =\left( (\sqrt{1-h} + \ii \sqrt{h} - \tfrac{1}{2 \sqrt{1-h}}) x'_1 + (-\sqrt{1-h} + \ii \sqrt{h} - \tfrac{1}{2 \sqrt{1-h}})x'_2 + (\tfrac{\ii}{2} \sqrt{h} - \tfrac{h}{2 \sqrt{1-h}})x_4   \right) x_3
    \\
    & + \left(\tfrac{\ii}{2} \sqrt{b} - \tfrac{1}{2 \sqrt{1-h}})(x'_1+x'_2) - \tfrac{1}{2 \sqrt{1-h}} x_4 \right) x_5
\end{align*} 
Now we make an ansatz $x_3=Ax'_3+Bx'_5$, $x_5=Cx'_3+Dx'_5$ for some constants to be determined. We impose that:
\begin{itemize}
    \item in $\f'_2$ the coefficients of $x'_2x'_3$ and $x_4x'_3$ vanish
    \item in $\f'_3$ the coefficients of $x'_1x'_5$ and $x_4x'_5$ vanish
\end{itemize}
 and get the (overdetermined) systems:
\[
\begin{cases}
    A((-\sqrt{1-h} + \ii \sqrt{h} + \tfrac{1}{2 \sqrt{1-h}})+ C ( \tfrac{\ii}{2} \sqrt{b} + \tfrac{1}{2 \sqrt{1-h}}) =0 \\
    A (\tfrac{\ii}{2} \sqrt{h} + \tfrac{h}{2 \sqrt{1-h}}) + C \tfrac{1}{2 \sqrt{1-h}} = 0
\end{cases}
\]
\[
\begin{cases}
    B((\sqrt{1-h} + \ii \sqrt{h} - \tfrac{1}{2 \sqrt{1-h}})+ D (\tfrac{\ii}{2} \sqrt{b} - \tfrac{1}{2 \sqrt{1-h}}) =0 \\
    B (\tfrac{\ii}{2} \sqrt{h} - \tfrac{h}{2 \sqrt{1-h}}) - D \tfrac{1}{2 \sqrt{1-h}} = 0
\end{cases}
\]
both of which have determinant zero, so they have parametric solutions
\[
\begin{cases}
    A=-\l \frac{1}{2\sqrt{1-h}}  \\
    C= \l (\frac{\ii}{2} \sqrt{h} +\frac{h}
    {2 \sqrt{1-h}}) \\
    B= \g \frac{1}{2 \sqrt{1-h}} \\
    D= \g (\frac{\ii}{2} \sqrt{h} - \frac{h}{2 \sqrt{1-h}})
\end{cases}
\text{ with } \l , \g \in \kk
\]
When we substitute this in the expressions of $\f'_2, \f'_3$ we get
\[
\begin{cases}
    \f'_2= - \l x'_1 x'_3 + \g \left( (\frac{1}{2} +\frac{\ii}{2} \frac{\sqrt{h}}{\sqrt{1-h}})x'_1 + (-\frac{1}{2} +\frac{\ii}{2} \frac{\sqrt{h}}{\sqrt{1-h}})x'_2 + \frac{\ii}{2} \frac{\sqrt{h}}{\sqrt{1-h}} x_4 \right) x'_5 \\
    \f'_3=\l \left( -(\frac{1}{2} +\frac{\ii}{2} \frac{\sqrt{h}}{\sqrt{1-h}})x'_1 + (\frac{1}{2} -\frac{\ii}{2} \frac{\sqrt{h}}{\sqrt{1-h}})x'_2 - \frac{\ii}{2} \frac{\sqrt{h}}{\sqrt{1-h}} x_4 \right) x'_3 - \g x'_2x'_5  
\end{cases}
\]
and some sort of a miracle allows us to define the last element of the basis
\[
x'_4=(\tfrac{1}{2} +\tfrac{\ii}{2} \tfrac{\sqrt{h}}{\sqrt{1-h}})x'_1 + (-\tfrac{1}{2} +\tfrac{\ii}{2} \tfrac{\sqrt{h}}{\sqrt{1-h}})x'_2 + \tfrac{\ii}{2} \tfrac{\sqrt{h}}{\sqrt{1-h}} x_4
\]
so that we have
\[
\begin{cases}
    \f'_2= - \l x'_1 x'_3 + \g x'_4 x'_5 \\
    \f'_3=-\l x'_4 x'_3 - \g x'_2x'_5  
\end{cases}
\]
so if we choose $\l=1$, $\g=-1$, and change the sign to $\f'_2$ we get
\[
\begin{cases}
    -\f'_2= x'_1 x'_3 + x'_4 x'_5 \\
    \f'_3= x'_3 x'_4 + x'_2x'_5  
\end{cases}
\]
and this our sought model.
\end{proof}

\noindent To conclude, we study $X$ scheme-theoretically. Plugging the parametric equations of $\pi$ into \eqref{eq:G(2,5)} yields
\[
X=\mathrm{Proj}(\kk[x,y,z]/(xz,yz,yz,y^2,z^2))\,.
\]
As expected, set-theoretically $X=\{[1:0:0]\}$. The Hilbert function of $X$ is $h(1)=3$ and $h(n)=1$ if $n \ge 2$, hence $X$ has dimension $0$ and degree $1$, so $X$ is a simple point also as a scheme.

\noindent The last case to consider is $X=\emptyset$. This is the generic case by dimension arguments, since $\dim \GG=6$, so a generic plane $\pi \subset \PP^9$ is disjoint from $\GG$. 
For instance, this case occurs if $\pi$ is generated by
\begin{equation} \label{eq:plane-case17}
\left\{\begin{array}{ccl}
 \f_1 & = & x_1x_2+x_3x_4\\
 \f_2 & = & x_1x_3+x_4x_5\\
 \f_3 & = & x_1x_5+x_2x_3
\end{array}\right.
\end{equation}
Indeed, a linear combination $\f=\a \f_1+\b\f_2+\g\f_3$ has square
\[
\f^2=\a^2x_1x_2x_3x_4+(\b^2+\a\g)x_1x_3x_4x_5+ \g^2 \, x_1x_2x_3x_5 +\a\b \, x_1x_2x_4x_5+\b \g \, x_2x_3x_4x_5
\]
which is non-zero unless $\a=\b=\g=0$.
We need to see that the above is the only model satisfying the condition $\pi \cap \GG=\emptyset$. 

\begin{lemma}
Under the action of $\PGL(W)$ in $\PP(\L^2 W)=\PP^9$, all planes $\pi \subset \PP^9$ with $\pi \cap \GG=\emptyset$ are in the same orbit, whose representative is given by \eqref{eq:plane-case17}.
\end{lemma}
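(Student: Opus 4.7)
The plan is to build the desired basis of $W$ step by step by successively normalizing three rank-$4$ generators of $\pi$. First, since $\pi \cap \GG = \emptyset$, every nonzero bivector in $\pi$ has rank $4$. Pick any $\f_1 \in \pi \setminus \{0\}$ and choose a basis of $W$ so that $\f_1 = x_1x_2 + x_3x_4$, hence $U_1 = \la x_1,x_2,x_3,x_4 \ra$. The first key remark is: for any two linearly independent $\f, \f' \in \pi$, one has $U_\f \neq U_{\f'}$, because otherwise the line $\la \f, \f' \ra$ would sit inside $\PP(\L^2 U_\f) \cong \PP^5$ and would therefore meet the Klein quadric $\GG \cap \PP(\L^2 U_\f)$, contradicting $\pi \cap \GG = \emptyset$. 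Since $U_\f, U_{\f'} \subset W$ with $\dim W = 5$, it follows that $\dim(U_\f \cap U_{\f'}) = 3$ and $U_\f + U_{\f'} = W$ for \emph{every} such pair.

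Next, pick a second generator $\f_2 \in \pi$ not proportional to $\f_1$. The restriction $\f_1|_{U_1 \cap U_2}$ has rank $2$ with $1$-dimensional kernel, so one can use the $\Sp(U_1, \f_1)$-action (which is transitive on $3$-dimensional subspaces of $U_1$) to arrange $U_1 \cap U_2 = \la x_1, x_2, x_3 \ra$ with $x_3$ the $\f_1$-isotropic vector. Picking $x_5 \in U_2 \setminus U_1$ then gives $U_2 = \la x_1,x_2,x_3,x_5 \ra$, and $\f_2$ becomes a generic element of $\L^2 U_2$. By a sequence of simplifying changes of basis analogous to those carried out in Section \ref{sec:(5,3)} (eliminating $x_1x_2$ via a scalar correction that possibly moves $U_2$, and diagonalizing the skew form on $\la x_1,x_2,x_3,x_5 \ra$), together with the non-degeneracy requirement $\f_2^2 \ne 0$, one can bring the pair $(\f_1, \f_2)$ to the canonical form
\[
\f_1 = x_1x_2 + x_3x_4, \qquad \f_2 = x_1x_3 + x_4x_5.
\]

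Finally, write the third generator as $\f_3 = \sum_{i<j} a_{ij} x_i x_j$ in the basis fixed above. The crucial constraint is that $\bigl(\alpha \f_1 + \beta \f_2 + \gamma \f_3\bigr)^2 \in \L^4 W \cong W^*$ must be non-vanishing for all $[\alpha:\beta:\gamma]\in\PP^2$; equivalently, the associated quadratic map $\PP^2 \to \PP(W^*) = \PP^4$ sending $[\alpha:\beta:\gamma]$ to the Pfaffian vector of $\alpha \f_1 + \beta \f_2 + \gamma \f_3$ has empty base locus. This is an open condition that translates into explicit non-vanishing of certain polynomials in the $a_{ij}$; these in turn force several $a_{ij}$ to be non-zero, and permit further rescalings and substitutions (keeping $\f_1, \f_2$ unchanged, up to relabeling inside the orbit of $\la \f_1, \f_2 \ra$) that successively eliminate all terms of $\f_3$ except $x_1x_5 + x_2x_3$. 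The main obstacle will be Step $3$: keeping track of the residual changes of basis that preserve the shapes of both $\f_1$ and $\f_2$ is delicate, because any attempt to modify $\f_2$ by adding multiples of $\f_1$ changes $U_2$ and hence all the normalizations made so far. The cleanest way forward is to work inside a \emph{fixed} ordered basis and use the $\pi \cap \GG = \emptyset$ condition (i.e.\ non-vanishing of the Pfaffian map on all of $\PP^2$) as the driving constraint that forces the remaining coefficients of $\f_3$ into the prescribed form, exactly as in the computational arguments used throughout Section \ref{sec:(5,3)}.
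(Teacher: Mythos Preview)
Your approach is genuinely different from the paper's. The paper does \emph{not} attempt a direct normalization of the three generators; instead it cites \cite{Manivel-Mezzetti} and sketches an orbit--dimension argument: one computes that the stabilizer in $\PGL(W)$ of the model plane \eqref{eq:plane-case17} has Lie algebra of dimension $3$, so its orbit has dimension $\dim\PGL(W)-3=21=\dim\GG(2,\PP^9)$ and is Zariski-open in the Grassmannian of planes; then, for an arbitrary plane $\pi'$ disjoint from $\GG$, one simplifies the model only partially and checks that its stabilizer also has dimension $3$, hence its orbit is open too; irreducibility of the Grassmannian then forces the two open orbits to coincide. This bypasses the explicit normalization of the third generator altogether.

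Your Steps~1 and~2 are reasonable, but Step~3 is not a proof, only a plan. You assert that the condition $\pi\cap\GG=\emptyset$ ``forces the remaining coefficients of $\f_3$ into the prescribed form,'' yet this condition gives \emph{open} constraints (non-vanishing of certain Pfaffian polynomials for all $[\alpha:\beta:\gamma]$), not equations, so by itself it cannot pin coefficients down; you still need to exhibit the changes of basis that kill the extra terms while preserving $\f_1=x_1x_2+x_3x_4$ and $\f_2=x_1x_3+x_4x_5$. The subgroup of $\GL(W)$ preserving both $[\f_1]$ and $[\f_2]$ is small (the paper's computation implies the stabilizer of the whole plane is $3$-dimensional), and it is not at all obvious---nor do you argue---that it acts transitively on the admissible choices of $\f_3$ modulo $\la\f_1,\f_2\ra$. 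The paper carries out precisely your style of explicit reduction for every other relative position in Sections~\ref{sec:(6,2)} and~\ref{sec:(5,3)}; its decision to switch methods here and invoke an external reference is a strong signal that the direct route is laborious. To complete your argument you would need either to perform the full coordinate computation (which is lengthy and which you have not begun) or to give an independent transitivity argument for the residual group action.
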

\begin{proof}
This was proved in \cite[Proposition 1]{Manivel-Mezzetti}. The proof follows these lines: 
\begin{itemize}
    \item First, one sees that the orbit of the plane $\pi$ from equation \eqref{eq:plane-case17} is a Zariski-open set inside the Grassmannian $\GG(2,\PP^9)$ of projective planes of $\PP^{9}=\PP(\L^2 W)$, i.e. that 
    \[
    \mathcal{O}_{\pi}=\PGL(W) \cdot \pi \subset \GG(2,9)
    \]
    is Zariski-open. This is done by computing explicitly the dimension of the Lie algebra of the stabilizer of $\pi$. The details of the computation are in \cite[Section 2]{Manivel-Mezzetti}. This dimension turns out to be $3$, hence the orbit has dimension 
    \[
    \dim \mathcal{O}_{\pi}= \dim \PGL(W)-3=21= \dim \GG(2,9)
    \]
    so $\mathcal{O}_{\pi}$ is open.
    \medskip
    \item Then, one starts from a generic plane $\pi'$ satisfying $\pi' \cap \GG=\emptyset$, simplifies a bit the model of $\pi'$, and then computes the dimension of the stabilizer of $\pi'$, which is also $3$. This shows that the orbit of $\pi'$ is also open in $\mathrm{Gr}(2,9)$.
    \medskip
    \item Finally, two Zariski-open subsets of the variety $\GG(2,9)$ must intersect, as the Grassmannian is irreducible. Hence the orbits of $\pi$ and $\pi'$ intersect, so they coincide.
\end{itemize}
\end{proof}

\noindent The last 8 models are characterized by the property that $\pi\cap\GG$ is a finite (perhaps empty) set. We collect these results in Table \ref{table:5-3_3}.
\begin{itemize}
    \item The second column contains the scheme-theoretic intersection of $\pi$ and the Grassmannian $\GG=\GG(1,4)$;
    \item the third, fourth and fifth contain the differentials of the non-closed elements;
    \item all the minimal algebras appearing in this table are irreducible. The sixth column identifies our algebra with the Lie algebra in the list obtained in \cite{Ren2011}.
\end{itemize}

\begin{table}[h]
\caption{Minimal algebras of type $(5,3)$ with $\pi\cap\GG$ finite/empty}\label{table:5-3_3}
{\tabulinesep=1mm
\begin{tabu}{|c|c|c|c|c|c|c|c|c|c|c|c|c|c|}
\hline
Label & $\pi\cap\GG$ & $dx_6$ & $dx_7$ & $dx_8$ & \cite{Ren2011}\\
\hline
(5.3.10) & $\{p,q,r\}$ & $x_1x_2$ & $x_3x_4$ & $x_1x_5+x_3x_5$ & $N_{10}^{8,3}$ \\
\hline
(5.3.11) & $\{2p,q\}$ & $x_1x_2$ & $x_3x_4$ & $x_1x_3+x_2x_5$ & $N_{3}^{8,3}$\\
\hline
(5.3.12) & $\{p,q\}$ & $x_1x_2$ & $x_3x_4$ & $x_1x_3+(x_2+x_4)x_5$ & $N_{11}^{8,3}$\\
\hline
(5.3.13) & $\{3p\}+2 \tr{ dir}$ & $x_1x_2$ & $x_1x_3+x_2x_4$ & $x_1x_5+x_2x_3$ & $N_{8}^{8,3}$\\
\hline
(5.3.14) & $\{3p\}+1 \tr{ dir}$ & $x_1x_2$ & $x_1x_3+x_2x_4$ & $x_1x_5+x_3x_4$ & $N_{7}^{8,3}$\\
\hline
(5.3.15) & $\{2p\}$ & $x_1x_2$ & $x_1x_3+x_2x_4$ & $x_1x_4+x_3x_5$ & $N_{6}^{8,3}$\\
\hline
(5.3.16) & $\{2p\}$ & $x_1x_2$ & $x_1x_3+x_4x_5$ & $x_3x_4+x_2x_5$ & $N_{4}^{8,3}$\\
\hline
(5.3.17) & $\emptyset$ & $x_1x_2+x_3x_4$ & $x_1x_3+x_4x_5$ & $x_1x_5+x_2x_3$ & $N_{9}^{8,3}$\\
\hline
\end{tabu}}
\end{table}

\section{Case (4,4)}\label{sec:(4,4)}

\noindent We have $d\colon F_1 \to \L^2 W_0$ injective, with $\dim F_1=4$, and $\pi=\PP(d(F_1))$ is a projective $3$-plane in $\PP^5=\PP(\L^2 W)$, where $W=W_0$. As in the previous case, every bivector in $\L^2 W$ has rank at most $4$, and the rank stratification has one non-trivial stratum: the rank-$2$ bivectors given by the Plücker embedding of the Grassmannian $\tr{Gr}(2,4)$ of planes in $W \cong \kk^4$, or equivalently the Grassmannian $\GG(1,3)$ of projective lines in $\PP(W) \cong \PP^3$. We need to study relative positions of $\pi$ and $\GG(1,3)$ inside $\PP^5=\PP(\L^2 W)$. Let us set $\GG=\GG(1,3)$ in this section. It is well-known that the Plücker embedding sends $\GG$ to the Klein quadric, a smooth quadric in $\PP^5$. 

\subsection{Properties of quadrics} We start by recalling a few facts about quadrics; a reference for this is \cite[Chapter 22]{Harris}. Let $\cQ\subset\PP(V)=\PP^n
$ be a quadric, the vanishing locus of a homogeneous polynomial $Q$ of degree 2. Then $Q\colon V\times V\to\kk$ is a quadratic form. The {\em rank} of $\cQ$ is the rank of the linear map $\tilde{Q}\colon V\to V^*$, $\tilde{Q}(v)(w)=Q(v,w)$. $\cQ$ has maximal rank if and only if it is smooth.

\begin{lemma}\label{lem:quadric_intersection}
Let $\Lambda\cong\PP^{n-k}\subset\PP^n$ be a linear subspace and set $\cQ'=\cQ\cap \Lambda$. Then
\[
\textrm{rank}(\cQ)-2k\leq \textrm{rank}(\cQ')\leq \textrm{rank}(\cQ)\,.
\]
\end{lemma}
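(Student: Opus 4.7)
My plan is to translate the claim into linear algebra and then reduce it to the elementary observation that appending a row or column to a matrix changes its rank by at most one.

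Concretely, write $\PP^n=\PP(V)$ with $\dim V=n+1$, and let $W\subset V$ be the codimension-$k$ subspace whose projectivization is $\Lambda$, so that $\dim W=n+1-k$. I would fix a basis of $V$ whose first $n+1-k$ vectors span $W$; in this basis, the symmetric matrix of the bilinear form associated with $Q$ has the block shape
\[
A=\begin{pmatrix} A' & B \\ B^T & C \end{pmatrix},
\]
where $A'$ is $(n+1-k)\times(n+1-k)$. By construction $A'$ represents $Q|_W$, i.e. the quadratic form cutting out $\cQ'$, so $\rank(A)=\rank(\cQ)$ and $\rank(A')=\rank(\cQ')$ and the statement becomes the purely matrix-theoretic inequality $\rank(A)-2k\le \rank(A')\le \rank(A)$.

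The upper bound is then the standard fact that a principal submatrix has rank at most that of the ambient matrix. For the lower bound, I would view $A$ as being built from $A'$ in two stages: first append $k$ columns (producing $(A'\mid B)$) and then append $k$ rows (producing $A$). Each appended row or column raises the rank by at most one, so $\rank(A)\le \rank(A')+2k$, which is exactly the sought inequality $\rank(\cQ')\ge \rank(\cQ)-2k$.

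Both bounds are elementary once the matrices are written down, so I do not foresee a real obstacle; the only thing to verify carefully is that the conventions are consistent, so that the matrix of $Q|_W$ really is the top-left block of the matrix of $Q$, which is immediate from the choice of basis adapted to $W$.
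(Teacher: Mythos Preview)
Your proof is correct and takes a genuinely different route from the paper's. The paper argues coordinate-free: it views $\tilde{Q}'\colon W\to W^*$ as the composite $i^*\circ \tilde{Q}\circ i$ and applies rank--nullity twice, once to $\tilde{Q}|_W$ and once to $i^*|_{\tilde{Q}(W)}$, bounding each kernel against $\ker\tilde{Q}$ and the annihilator $W^0$ respectively. Your argument instead fixes a basis adapted to $W$, identifies $\rank(\cQ')$ with the rank of the top-left block of the Gram matrix, and invokes the elementary fact that adjoining a row or column changes the rank by at most one. Your approach is shorter and more transparent for readers comfortable with matrices; the paper's version is basis-independent and makes explicit exactly where each unit of rank can be lost (once through $\ker\tilde{Q}\cap W$, once through $\tilde{Q}(W)\cap W^0$), which can be useful if one later wants to characterize when the extremes are attained.
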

\begin{proof}
 Suppose $\Lambda=\PP(W)$ with $W\cong\kk^{n+1-k}$. Then $\rank(\cQ')$ is the rank of the linear map $\Tilde{Q}'\colon W\to W^*$, obtained as the composition of the following maps:
 \[
 W\xrightarrow{i} V\xrightarrow{\Tilde{Q}}V^*\xrightarrow{i^*}W^*\,.
 \]
 The inequality $\textrm{rank}(\cQ')\leq \textrm{rank}(\cQ)$ is obvious. For the second one, applying rank-nullity to the linear map $\Tilde{Q}\big|_W=\Tilde{Q}\circ i\colon W\to V^*$ we obtain
 \[
 \dim \Tilde{Q}(W)=\dim W-\dim\left(\ker \Tilde{Q}\big|_W\right)=\dim W-\dim(\ker\Tilde{Q}\cap W)\geq \dim W-\dim V+\rank(\cQ)\,,
 \]
 since $\ker\Tilde{Q}\cap W\subset\ker\Tilde{Q}$, hence $\dim(\ker\Tilde{Q}\cap W)\leq \dim\ker\Tilde{Q}=\dim V-\rank(\cQ)$. Consider next the linear map $i^*\big|_{\Tilde{Q}(W)}\colon\Tilde{Q}(W)\to W^*$; again by rank-nullity we have
 \begin{align*}
 \dim\left(\im i^*\big|_{\Tilde{Q}(W)}\right)&=\dim \Tilde{Q}(W)-\dim\left(\ker i^*\big|_{\Tilde{Q}(W)}\right)=\dim \Tilde{Q}(W)-\dim\left(\Tilde{Q}(W)\cap W^0\right)\\
 &\geq\dim \Tilde{Q}(W)-\dim W^0\,,    
 \end{align*}
 since $\Tilde{Q}(W)\cap W^0\subset W^0$, hence $\dim(\Tilde{Q}(W)\cap W^0)\leq \dim W^0$; here $W^0$ denotes the annihilator of $W$ in $V^*$. Altogether, we have
 
 \begin{align*}
 \rank(\cQ')&=\dim(\im \Tilde{Q}')=\dim \left(\im i^*\big|_{\Tilde{Q}(W)}\right)\geq\dim \Tilde{Q}(W)-\dim W^0\\
 &\geq \dim W-\dim V+\rank(\cQ)-\dim W^0=(n+1-k)-(n+1)+\rank(\cQ)-k\\
 &=\rank(\cQ)-2k\,.
 \end{align*}

\end{proof}

\subsection{Analysis of cases} Since the Klein quadric $\GG$ is smooth, it has rank 6. We are interested in the possible intersections of $\pi\cong\PP^3$ with $\GG$. By Lemma \ref{lem:quadric_intersection} the rank $r$ of $\pi\cap\GG$ satisfies $2\leq r\leq 4$. We call $\f_i$, $i=5,\ldots,8$ some generators of $\pi \subset \PP(\L^2 W)$ which we try to choose as simple as possible in suitable coordinates.

\noindent \textbf{Case 1: $r=4$.} In this case $\pi \cap \GG$ is a smooth quadric surface. A smooth quadric surface in $\pi=\PP^3$ is known to be a ruled surface $\mathcal{S}$: it contains two rulings of lines such that the lines of the first ruling are disjoint, as are the lines of the second ruling, and each line of the first ruling meets each line of the second ruling in one point. We choose our bivectors as follows:
\begin{itemize}
    \item let $\f_5$ be any point in $\mathcal{S}$ and denote by $\ell_1$ (resp. $\ell_2$) the line of the first (resp. second) ruling passing through $\f_5$;
    \item choose $\f_6$ on $\ell_1$ and $\f_7$ in $\ell_2$; denote by $\ell_3$ (resp. $\ell_4$) the other line, contained in $\mathcal{S}$, passing through $\f_6$ (resp. $\f_7$);
    \item set $\f_8\coloneqq \ell_3\cap\ell_4$.
\end{itemize}
It is easy to see that $\la\f_5,\ldots,\f_8\ra=\pi$ and that this choice of bivectors gives the following model:
\[
\begin{cases}
\f_5=x_1x_2 \\
\f_6=x_1x_3 \\
\f_7=x_2x_4 \\
\f_8=x_3x_4 \\
\end{cases}
\]

\noindent \textbf{Case 2: $r=3$.} In this case $\pi \cap \GG$ is a quadric cone; in other words, $\pi \cap \GG$ is the cone over a smooth conic $\mathscr{C}$ contained in some $\PP^2\subset\pi$. We choose our bivectors as follows:
\begin{itemize}
    \item $\f_5$ is the vertex of the cone;  
    \item pick $\f_6$ and $\f_7$ on $\mathscr{C}$ and consider the tangent lines $\ell_6$ and $\ell_7$ to $\mathscr{C}$ in $\PP^2$;
    \item set $\f_8\coloneqq \ell_6\cap\ell_7$.
\end{itemize}
Clearly $\la\f_5,\ldots,\f_8\ra=\pi$. Taking coordinates, it is easy to see that this choice of bivectors gives the model
\[
\begin{cases}
\f_5=x_1x_2 \\
\f_6=x_1x_4 \\
\f_7=x_2x_3 \\
\f_8=x_1x_3-x_2x_4 \\
\end{cases}
\]

\noindent \textbf{Case 3: $r=2$.} In this case $\pi \cap \GG=\pi_1\cup\pi_2$, that is, $\pi \cap \GG$ consists of two 2-planes. We need more facts about the Klein quadric. It is known (see \cite[Example 22.7]{Harris}) that the planes contained in $\GG$ form a 3-dimensional Fano variety with two irreducible connected components. Hence $\GG$ contains two rulings by 2-planes. We have the following result (see \cite[Proposition 22.8]{Harris}):

\begin{proposition}
Two 2-planes of the same ruling in $\GG$ either coincide or intersect in a single point; two 2-planes of opposite ruling either intersect in a line or are disjoint.
\end{proposition}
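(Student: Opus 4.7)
The plan is to identify the two rulings of 2-planes in $\GG=\GG(1,3)$ explicitly in terms of geometric objects in $\PP^3$, and then run through the small case analysis of pairwise intersections by elementary incidence geometry in $\PP^3$. The key observation is that the Plücker embedding translates linear-algebraic incidence questions inside $\PP^5$ into projective-incidence questions between points, lines and planes of the ambient $\PP^3$.

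First I would recall the standard description (see \cite[Example 22.7]{Harris}) of the two rulings of 2-planes inside $\GG$: for any point $p\in\PP^3$ the set
\[
\alpha_p\coloneqq\{\ell\in\GG(1,3)\mid p\in\ell\}
\]
is a 2-plane of $\PP^5$ contained in $\GG$, and for any plane $H\subset\PP^3$ the set
\[
\beta_H\coloneqq\{\ell\in\GG(1,3)\mid \ell\subset H\}
\]
is likewise a 2-plane contained in $\GG$. The planes $\alpha_p$ form one ruling and the planes $\beta_H$ form the other, and these exhaust all 2-planes contained in $\GG$.

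Next I would dispatch the four cases. For two $\alpha$-planes $\alpha_p$, $\alpha_q$ with $p\neq q$, a point in the intersection represents a line $\ell\subset\PP^3$ passing through both $p$ and $q$; since two distinct points of $\PP^3$ lie on a unique line, $\alpha_p\cap\alpha_q=\{[\overline{pq}]\}$. Similarly, for two $\beta$-planes $\beta_{H_1}$, $\beta_{H_2}$ with $H_1\neq H_2$, a point in the intersection is a line $\ell\subset H_1\cap H_2$; but $H_1\cap H_2$ is itself a projective line, so $\ell=H_1\cap H_2$ and the intersection is a single point. For opposite rulings $\alpha_p$ and $\beta_H$, the intersection consists of lines through $p$ contained in $H$: if $p\notin H$ no such line exists and $\alpha_p\cap\beta_H=\emptyset$; if $p\in H$ such lines form the pencil of lines through $p$ in $H$, a $\PP^1$, which is the projective line $\alpha_p\cap\beta_H\subset\PP^5$.

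The only genuinely non-obvious ingredient is the identification of the two rulings, which is a classical structure theorem for the Klein quadric and may simply be cited. Everything else reduces to the trivial observations that two distinct points of $\PP^3$ determine a unique line and that two distinct planes of $\PP^3$ meet in a unique line, so no real obstacle arises.
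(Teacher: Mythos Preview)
Your proof is correct. Note, however, that the paper does not actually supply a proof of this proposition: it is stated with a reference to \cite[Proposition 22.8]{Harris} and left at that. Immediately after the statement the paper does describe the two rulings (in the vector-space language: $2$-planes contained in a fixed $3$-dimensional subspace $H\subset W$, and $2$-planes containing a fixed line $r\subset W$), which is exactly your $\beta_H$ and $\alpha_p$ description after projectivization. Your incidence argument is the standard one and is essentially what one finds in Harris, so there is nothing to compare; you have simply filled in the cited proof.
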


\noindent The planes of the first ruling consist of vector $2$-planes contained in some 3-dimensional subspace $H\subset W$, and the planes of the second ruling consist of vector $2$-planes containing a given line $r\subset W$. Since $\pi_1$ and $\pi_2$ are contained in $\pi$ they intersect in a line $\ell$, hence they belong to opposite rulings in $\GG$. We choose our bivectors as follows:
\begin{itemize}
    \item we pick $\f_5$ and $\f_6$ on $\ell$;
    \item we choose $\f_7$ in $\pi_1$, away from $\ell$;
    \item we choose $\f_8$ in $\pi_2$, away from $\ell$.
\end{itemize}
It is easy to see that $\la\f_5,\ldots,\f_8\ra=\pi$ and that this choice of bivectors gives the following model:
\[
\begin{cases}
\f_5=x_1x_2 \\
\f_6=x_1x_3 \\
\f_7=x_2x_3 \\
\f_8=x_1x_4 \\
\end{cases}
\]

\noindent We thus obtain 3 minimal algebras of type $(4,4)$ over $\kk$ (or any quadratically closed field). This agrees with \cite[Corollary 7.5]{Stroppel}. We collect these results in Table \ref{table:4-4}.
\begin{itemize}
    \item The second column contains the rank of the quadric obtained by intersecting $\pi\cong\PP^3$ with the Grassmannian $\GG=\GG(1,3)$;
    \item columns three to six contain the differentials of the non-closed elements;
    \item all the minimal algebras appearing in this table are irreducible. The sixth column identifies our algebra with the Lie algebra in the classification obtained in \cite{Yan}.
\end{itemize}

\begin{table}[h]
\caption{Minimal algebras of type $(4,4)$}\label{table:4-4}
{\tabulinesep=1mm
\begin{tabu}{|c|c|c|c|c|c|c|c|c|c|c|c|c|c|}
\hline
Label & $\mathrm{rank}(\pi \cap \GG)$ & $dx_5$ & $dx_6$ & $dx_7$ & $dx_8$ & \cite{Yan}\\
\hline
(4.4.1) & 4 & $x_1x_2$ & $x_1x_3$ & $x_2x_4$ & $x_3x_4$ & $N_1^{8,4}$ \\
\hline
(4.4.2) & 3 & $x_1x_2$ & $x_1x_4$ & $x_2x_3$ & $x_1x_3-x_2x_4$ & $N_{3}^{8,4}$\\
\hline
(4.4.3) & 2 & $x_1x_2$ & $x_1x_3$ & $x_2x_3$ & $x_1x_4$ & $N_2^{8,4}$\\
\hline
\end{tabu}}
\end{table}

\section{The complete list}

In this last section we include a table with all 2-step nilpotent 8-dimensional Lie algebras over an algebraically closed field $\kk$ of characteristic $\ne 2,3$. The fact that, in the irreducible case, our list coincides with other lists in the literature such as \cite{Ren2011,Yan} shows that, indeed, no specific properties of the complex numbers are needed, apart from being algebraically closed and of characteristic $\ne 2,3$. We include the dimension of the center (which is computed easily) and the Betti numbers, for which we used the package {\em Commutative Differential Graded Algebras} from \texttt{SageMath}. Clearly $b_1=\dim W_0$ and the remaining Betti numbers can be computed by Poincaré duality, since every nilpotent Lie algebra is unimodular.

\begin{remark}
We believe that this classification is still valid for fields of characteristic three; however, in this setting some technical difficulties appear in the $(6,2)$-case as the condition $\f^3=0$ is trivially satisfied, so the rank-3 bivectors must be characterized otherwise.
\end{remark}

\begin{table}[h!]\label{table:final}
\caption{8-dimensional 2-step nilpotent Lie algebras over $\kk$}
{\tabulinesep=1mm
\begin{tabu}{|c|c|c|c|c|c|c|c|c|c|c|c|c|c|}
\hline
$\frg$ & $dx_5$ & $dx_6$ & $dx_7$ & $dx_8$ & $\dim \mathfrak{z}(\frg)$ & $b_2$ & $b_3$ & $b_4$\\
\hline
(8.0.1) & 0 & 0 & 0 & 0 & 8 & 28 & 56 & 70\\
\hline
(7.1.1) & 0 & 0 & 0 & $x_1x_2$ & 6 & 22 & 41 & 50\\
\hline
(7.1.2) & 0 & 0 & 0 & $x_1x_2+x_3x_4$ & 4 & 20 & 33 & 38\\
\hline
(7.1.3) & 0 & 0 & 0 & $x_1x_2+x_3x_4+x_5x_6$ & 2 & 20 & 28 & 28\\
\hline
(6.2.1) & 0 & 0 & $x_1x_2$ & $x_1x_3$ & 5 & 18 & 34 & 42\\
\hline
(6.2.2) & 0 & 0 & $x_1x_2$ & $x_3x_4$ & 4 & 17 & 30 & 36\\
\hline
(6.2.3) & 0 & 0 & $x_1x_2$ & $x_1x_3+x_2x_4$ & 4 & 17 & 30 & 36\\
\hline
(6.2.4) & 0 & 0 & $x_1x_2$ & $x_1x_3+x_4x_5$ & 3 & 15 & 26 & 32\\
\hline
(6.2.5) & 0 & 0 & $x_1x_2+x_3x_4$ & $x_1x_3+x_2x_5$ & 3 & 14 & 24 & 30\\
\hline
(6.2.6) & 0 & 0 & $x_1x_2+x_3x_4$ & $x_1x_5+x_3x_6$ & 2 & 13 & 23 & 30\\
\hline
(6.2.7) & 0 & 0 & $x_1x_2+x_3x_4$ & $x_3x_4+x_5x_6$ & 2 & 13 & 22 & 28\\
\hline
(6.2.8) & 0 & 0 & $x_1x_2+x_3x_4$ & $x_1x_5+x_3x_6$ & 2 & 13 & 23 & 30\\
\hline
(6.2.9) & 0 & 0 & $x_1x_2+x_3x_4$ & $x_1x_5+x_2x_3+x_4x_6$ & 2 & 13 & 22 & 28\\
\hline
(6.2.10) & 0 & 0 & $x_1x_2$ & $x_3x_4+x_5x_6$ & 2 & 15 & 24 & 28\\
\hline
(6.2.11) & 0 & 0 & $x_1x_2$ & $x_1x_3+x_2x_4+x_5x_6$ & 2 & 15 & 24 & 28 \\
\hline
(5.3.1) & 0 & $x_1x_2$ & $x_1x_3$ & $x_2x_3$ & 5 & 15 & 31 & 40\\
\hline
(5.3.2) & 0 & $x_1x_2$ & $x_1x_3$ & $x_1x_4$ & 4 & 16 & 30 & 36 \\
\hline
(5.3.3) & 0 & $x_1x_2$ & $x_3x_4$ & $x_1x_3+x_2x_4$ & 4 & 15 & 25 & 28 \\
\hline
(5.3.4) & 0 & $x_1x_2$ & $x_1x_3$ & $x_3x_4$ & 4 & 15 & 27 & 32 \\
\hline
(5.3.5) & 0 & $x_1x_2$ & $x_1x_3$ & $x_1x_4+x_2x_3$ & 4 & 15 & 28 & 34 \\
\hline
(5.3.6) & 0 & $x_1x_2$ & $x_1x_3$ & $x_4x_5$ & 3 & 14 & 25 & 30\\
\hline
(5.3.7) & 0 & $x_1x_2$ & $x_1x_3$ & $x_1x_4+x_2x_5$ & 3 & 14 & 25 & 30\\
\hline
(5.3.8) & 0 & $x_1x_2$ & $x_1x_3$ & $x_2x_4+x_3x_5$ & 3 & 13 & 24 & 30\\
\hline
(5.3.9) & 0 & $x_1x_2$ & $x_1x_3$ & $x_2x_3+x_4x_5$ & 3 & 12 & 23 & 30\\
\hline
(5.3.10) & 0 & $x_1x_2$ & $x_3x_4$ & $x_1x_5+x_3x_5$ & 3 & 13 & 23 & 28\\
\hline
(5.3.11) & 0 & $x_1x_2$ & $x_3x_4$ & $x_1x_3+x_2x_5$ & 3 & 13 & 23 & 28\\
\hline
(5.3.12) & 0 & $x_1x_2$ & $x_3x_4$ & $x_1x_3+(x_2+x_4)x_5$ & 3 & 12 & 22 & 28\\
\hline
(5.3.13) & 0 & $x_1x_2$ & $x_1x_3+x_2x_4$ & $x_1x_5+x_2x_3$ & 3 & 13 & 24 & 30\\
\hline
(5.3.14) & 0 & $x_1x_2$ & $x_1x_3+x_2x_4$ & $x_1x_5+x_3x_4$ & 3 & 13 & 23 & 28\\
\hline
(5.3.15) & 0 & $x_1x_2$ & $x_1x_3+x_2x_4$ & $x_1x_4+x_3x_5$ & 3 & 12 & 22 & 28\\
\hline
(5.3.16) & 0 & $x_1x_2$ & $x_1x_3+x_4x_5$ & $x_3x_4+x_2x_5$ & 3 & 12 & 22 & 28\\
\hline
(5.3.17) & 0 & $x_1x_2+x_3x_4$ & $x_1x_3+x_4x_5$ & $x_1x_5+x_2x_3$ & 3 & 12 & 22 & 28\\
\hline
(4.4.1) & $x_1x_2$ & $x_1x_3$ & $x_2x_4$ & $x_3x_4$ & 4 & 14 & 25 & 28\\
\hline
(4.4.2) & $x_1x_2$ & $x_1x_4$ & $x_2x_3$ & $x_1x_3-x_2x_4$ & 4 & 14 & 25 & 28\\
\hline
(4.4.3) & $x_1x_2$ & $x_1x_3$ & $x_2x_3$ & $x_1x_4$ & 4 & 14 & 26 & 30\\
\hline
\end{tabu}}
\end{table}

\section*{Data availability statement}
No additional data were generated or analyzed in this research.

\section*{Conflict of interests}
On behalf of all authors, the corresponding author states that there is no conflict of interest.

\newpage

\bibliographystyle{plain}

%\bibliography{Bibliography}
\end{document}